\pgfplotsset{
cycle list/Set1-5,
cycle multiindex* list={
mark list*\nextlist
Set1-5\nextlist
},
}
\useunder{\uline}{\ul}{}
\pgfplotsset{
    discard if not/.style 2 args={
        x filter/.append code={
            \edef\tempa{\thisrow{#1}}
            \edef\tempb{#2}
            \ifx\tempa\tempb
            \else
                
            \fi
        }
    }
}
\pgfplotsset{compat=1.16}%
\pgfplotsset{tick label style={font=\small},label style={font=\small},legend style={font=\small},}
\pgfplotsset{ width=.49\linewidth}
\definecolor{maroon}{cmyk}{0, 0.87, 0.68, 0.32}
\definecolor{halfgray}{gray}{0.55}
\definecolor{ipython_frame}{RGB}{207, 207, 207}
\definecolor{ipython_bg}{RGB}{247, 247, 247}
\definecolor{ipython_red}{RGB}{186, 33, 33}
\definecolor{ipython_green}{RGB}{0, 128, 0}
\definecolor{ipython_cyan}{RGB}{64, 128, 128}
\definecolor{ipython_purple}{RGB}{170, 34, 255}
\lstdefinelanguage{iPython}{
    morekeywords=[2]{abs,all,any,basestring,bin,bool,bytearray,callable,chr,classmethod,cmp,compile,complex,delattr,dict,dir,divmod,enumerate,eval,execfile,file,filter,float,format,frozenset,getattr,globals,hasattr,hash,help,hex,id,input,int,isinstance,issubclass,iter,len,list,locals,long,map,max,memoryview,min,next,object,oct,open,ord,pow,property,range,raw_input,reduce,reload,repr,reversed,round,set,setattr,slice,sorted,staticmethod,str,sum,super,tuple,type,unichr,unicode,vars,xrange,zip,apply,buffer,coerce,intern},%
    sensitive=true,%
    morecomment=[l]\#,%
    morestring=[b]',%
    morestring=[b]",%
    morestring=[s]{'''}{'''},%
    morestring=[s]{"""}{"""},%
    morestring=[s]{r'}{'},%
    morestring=[s]{r"}{"},%
    morestring=[s]{r'''}{'''},%
    morestring=[s]{r"""}{"""},%
    morestring=[s]{u'}{'},%
    morestring=[s]{u"}{"},%
    morestring=[s]{u'''}{'''},%
    morestring=[s]{u"""}{"""},%
    literate=
    {á}{{\'a}}1 {é}{{\'e}}1 {í}{{\'i}}1 {ó}{{\'o}}1 {ú}{{\'u}}1
    {Á}{{\'A}}1 {É}{{\'E}}1 {Í}{{\'I}}1 {Ó}{{\'O}}1 {Ú}{{\'U}}1
    {à}{{\`a}}1 {è}{{\`e}}1 {ì}{{\`i}}1 {ò}{{\`o}}1 {ù}{{\`u}}1
    {À}{{\`A}}1 {È}{{\'E}}1 {Ì}{{\`I}}1 {Ò}{{\`O}}1 {Ù}{{\`U}}1
    {ä}{{\"a}}1 {ë}{{\"e}}1 {ï}{{\"i}}1 {ö}{{\"o}}1 {ü}{{\"u}}1
    {Ä}{{\"A}}1 {Ë}{{\"E}}1 {Ï}{{\"I}}1 {Ö}{{\"O}}1 {Ü}{{\"U}}1
    {â}{{\^a}}1 {ê}{{\^e}}1 {î}{{\^i}}1 {ô}{{\^o}}1 {û}{{\^u}}1
    {Â}{{\^A}}1 {Ê}{{\^E}}1 {Î}{{\^I}}1 {Ô}{{\^O}}1 {Û}{{\^U}}1
    {œ}{{\oe}}1 {Œ}{{\OE}}1 {æ}{{\ae}}1 {Æ}{{\AE}}1 {ß}{{\ss}}1
    {ç}{{\c c}}1 {Ç}{{\c C}}1 {ø}{{\o}}1 {å}{{\r a}}1 {Å}{{\r A}}1
    {€}{{\EUR}}1 {£}{{\pounds}}1
    {^}{{{\color{ipython_purple}\^{}}}}1
    {=}{{{\color{ipython_purple}=}}}1
    {+}{{{\color{ipython_purple}+}}}1
    {*}{{{\color{ipython_purple}$^\ast$}}}1
    {/}{{{\color{ipython_purple}/}}}1
    {+=}{{{+=}}}1
    {-=}{{{-=}}}1
    {*=}{{{$^\ast$=}}}1
    {/=}{{{/=}}}1,
    literate=
    *{-}{{{\color{ipython_purple}-}}}1
     {?}{{{\color{ipython_purple}?}}}1,
    identifierstyle=\color{black}\ttfamily,
    commentstyle=\color{ipython_cyan}\ttfamily,
    stringstyle=\color{ipython_red}\ttfamily,
    keepspaces=true,
    showspaces=false,
    showstringspaces=false,
    rulecolor=\color{ipython_frame},
    framexleftmargin=0mm,
    numbers=left,
    numberstyle=\tiny\color{halfgray},
    numbersep=1mm,
    xleftmargin=1mm,
    basicstyle=\scriptsize,
    keywordstyle=\color{ipython_green}\ttfamily,
}
\lstdefinestyle{trefftzy}{
    language=iPython,
    emptylines=1,
    breaklines=true,
    basicstyle=\footnotesize\ttfamily\color{black},    
    moredelim=**[is][\color{teal}]{<}{>},
    moredelim=**[is][\color{purple}]{'}{'},
}
\theoremstyle{plain}
\newtheorem{theorem}{Theorem}%
\newtheorem{lemma}[theorem]{Lemma}
\newtheorem{corollary}[theorem]{Corollary}
\theoremstyle{definition}
\newtheorem{remark}{Remark}
\newtheorem{assumption}{Assumption}%
\newcommand{\bpm}{\begin{pmatrix}}
\newcommand{\epm}{\end{pmatrix}}
\newcommand{\jump}[1]{[\![ #1 ]\!]}
\newcommand{\Th}{{\mathcal{T}_h}} 
\newcommand{\Fh}{{\mathcal{F}_h}} 
\newcommand{\dom}{\Omega} 
\newcommand{\OmT}{\Omega^{\mathcal{T}}}
\newcommand{\Omg}{\Omega_{\gamma}}
\newcommand{\Omi}{\Omega^{\text{int}}}
\newcommand{\OmG}{\Omega^{\Gamma}}
\newcommand{\Thi}{\mathcal{T}_h^i}
\newcommand{\ThG}{\mathcal{T}_h^\Gamma}
\newcommand{\Sz}{\Sigma^0}
\newcommand{\Sh}{\Sigma_h}
\newcommand{\Shz}{\Sigma_h^0}
\newcommand{\Shperp}{\Sigma_h^\perp{\!}}
\newcommand{\Sperp}{\Sigma^\perp{\!}}
\newcommand{\PiRT}{\Pi^{\mathbb{R}\mathbb{T}}}
\newcommand{\ue}{u^{\mathcal{E}}}
\newcommand{\pe}{p^{\mathcal{E}}}
\newcommand{\fe}{f^{\mathcal{E}}}
\renewcommand{\div}{\operatorname{div}}
\DeclareMathOperator{\id}{id}
\DeclareMathOperator{\diam}{diam}
\DeclareMathOperator{\meas}{meas}
\newcommand*{\norm}[1]{\left\|#1\right\|}
\newcommand\restr[2]{{ \left.\kern-\nulldelimiterspace #1 \vphantom{\big|} \right|_{#2} }}
\newcommand{\bn}{\mathbf{n}}
\definecolor{pscol}{rgb}{0.8,0,0}
\definecolor{clcol}{rgb}{0,0.5,0.6}
\newcommand{\nrm}[1]{\left\Vert #1 \right\Vert}
\begin{document}

\title[Analysis of divergence-preserving unfitted FEM]{Analysis of divergence-preserving unfitted finite element methods for the mixed Poisson problem}

\author{Christoph Lehrenfeld}
\address{Institute for Numerical and Applied Mathematics\\
University of Göttingen\\
Lotzestr. 16-18, 37083 Göttingen, Germany\\}
\email{lehrenfeld@math.uni-goettingen.de}

\author{Tim van Beeck}
\address{Institute for Numerical and Applied Mathematics\\
University of Göttingen\\
Lotzestr. 16-18, 37083 Göttingen, Germany\\}
\email{tim.vanbeeck@stud.uni-goettingen.de}

\author{Igor Voulis}
\address{Institute for Numerical and Applied Mathematics\\
University of Göttingen\\
Lotzestr. 16-18, 37083 Göttingen, Germany\\}
\email{i.voulis@math.uni-goettingen.de}

\subjclass[2010]{Primary }

\date{\today}

\dedicatory{}

\begin{abstract}
In this paper we present a new $H(\div)$-conforming unfitted finite element method for the mixed Poisson problem which is robust in the cut configuration and preserves conservation properties of body-fitted finite element methods. The key is to formulate the divergence-constraint on the active mesh, instead of the physical domain, in order to obtain robustness with respect to cut configurations without the need for a stabilization that pollutes the mass balance. This change in the formulation results in a slight inconsistency, but does not affect the accuracy of the flux variable. By applying post-processings for the scalar variable, in virtue of classical local post-processings in body-fitted methods, we retain optimal convergence rates for both variables and even the superconvergence after post-processing of the scalar variable. We present the method and perform a rigorous a-priori error analysis of the method and discuss several variants and extensions. Numerical experiments confirm the theoretical results.
\end{abstract}

\keywords{unfitted FEM, mixed FEM, stabilization, mass conservation}

\maketitle

\section{Introduction}
In the recent decades unfitted finite element methods have become a popular tool for the numerical approximation of partial differential equations (PDEs) on complex geometries. The separation of the geometry description from the computational mesh allows for a flexible handling of the geometry and avoids meshing and re-meshing issues of body-fitted discretizations.
Unfitted finite element methods go under different names such as 
\emph{CutFEM}~\cite{B15}, \emph{extended FEM} (X-FEM)\cite{BMUP01,MDB99}, \emph{Finite Cell} \cite{PDR07} and others. 
Several issues that come from the construction of the unfitted methods, such as the enforcement of boundary and interface conditions, the numerical integration on cut cells and the conditioning of the resulting linear systems, have been addressed in the literature for a variety of PDE problems.

One crucial development in the field of unfitted finite element methods was the introduction of the \emph{ghost penalty} (GP) stabilization \cite{Bur10} as a means of dealing with locally arbitrary cut configurations. 
In these situations GP stabilization serves two purposes: ensuring stability of an underlying body-fitted discretization in exact arithmetics and robustness of linear systems in terms of conditioning.
In the following, we will distinguish these two aspects as \emph{stability} and \emph{robustness}, respectively.
An alternative approach with the same goals is the concept of cell agglomeration techniques, see e.g. \cite{BVM18}.

Despite these advances, there are still several situations where features of body-fitted discretization do not carry over to the unfitted setting. Mixed finite element formulations, which rely on \emph{compatibility} of the finite element spaces involved, are one such situation. Compatibility here means that, compared to other methods, additional properties of the PDE solution carry over from the continuous to the discrete level. The most prominent example is local conservation.

In this paper we consider the mixed Poisson problem as a model problem for a mixed finite element formulation with additional \emph{compatibility}. We expect that several of the ideas presented here can be extended to other mixed problems, such as the Stokes problem.

Several papers in the literature have considered the use of $H(\div)$-conforming or divergence preserving finite elements in an unfitted setting for the mixed Poisson, the Darcy or the Stokes problem.
For the lowest order Raviart-Thomas elements an unfitted mixed finite element method has been proposed and analyzed for the Darcy interface problem in the 2D case in \cite{d2012mixed}. An immersed finite element method for a similar interface problem has recently been treated in \cite{Ji22}. Both formulations do not require ghost-penalty stabilization, but are restricted to the lowest order case. In \cite{puppi2021cut,caoextended} higher order Raviart-Thomas elements are considered, but a ghost-penalty stabilization is applied which pollutes the conservation properties of the method at least in the vicinity of cut elements. 
In the context of Stokes problems, the stabilization of the pressure variable in unfitted finite element formulations has been considered in \cite{massing2014stabilized,kirchhart2016analysis,guzman2018inf}. However, in these discretizations, the underlying body-fitted Stokes discretization is not locally mass-conservative in the first place.
Recently, \cite{liu2021cutfem} have considered a discretization for Stokes that is exactly divergence-free (and pressure robust) in the body-fitted case. For the unfitted case, they use a pressure stabilization and thus preserve the divergence-free property only in the vicinity of cut elements.

In \cite{burman2022cut}, a low order discretization of the Stokes problem is introduced which imposes the divergence-free constraint on the entire active mesh. Since the underlying velocity-pressure pair is \emph{compatible}, the discrete solution is exactly divergence-free, stable and robust in the cut configuration without additional ghost-penalty-type stabilization that involves the pressure variable.
The use of ghost penalties to stabilize a discretization in a way that does not pollute the conservation balance has recently been achieved in \cite{frachon2022divergence}, where the authors consider a mixed unfitted finite element method for the Darcy problem.

In this work we use similar ideas as in \cite{frachon2022divergence} and \cite{burman2022cut} to achieve stability without the need for ghost penalties or cell agglomeration.

\subsection*{Main contributions}
In this manuscript, we present a new unfitted mixed finite element method with an enforcement of the divergence constraint that avoids the need for ghost penalties or cell agglomeration on the scalar variable.
The resulting scheme introduces an inconsistency. We show that this inconsistency does not affect the accuracy of the flux variable which allows us to apply a post-processing to the scalar variable to obtain optimal convergence rates for both variables and even superconvergence after post-processing of the scalar variable.
We also discuss the use of hybridization and Neumann boundary conditions.

\subsection*{Structure of the paper}
In \cref{sec:preliminaries}, we introduce the problem, i.e. unfitted meshes and the PDE problem, as well as some further preliminary considerations.
The main method is presented in \cref{sec:discretization} and arises from a seemingly inconsistent  modification of a simple, but not stable, unfitted mixed formulation. To deal with the inconsistency, a simple post-processing, similar to that used in usual mixed methods for Poisson or Darcy problems, is applied. 
In \cref{sec:analysis1} the method without post-processing is analysed in terms of a full a priori error analysis. \Cref{sec:post-process} then introduces and analyses two possible post-processing schemes which lead to optimal, i.e., superconvergent approximations of the primal unknown.

Further modifications of the main method with computational advantages or a broader range of applications are given in \cref{sec:further}. Finally, 
\cref{sec:numerics} discusses numerical results and computational aspects of the discussed methods. 

\section{Preliminaries} \label{sec:preliminaries}
\subsection{Unfitted geometry, computational meshes and patches}\label{sec:patches}
We consider a PDE problem posed on an unfitted open bounded domain $\Omega \subset \mathbb{R}^d,~d=2,3$ with Lipschitz boundary $\Gamma := \partial \Omega$. \emph{Unfitted} means that $\Omega$ is not parametrized by the computational mesh that is used for the finite element approximation of the PDE solution.

Let $\widetilde \Omega$ be a background domain, sufficiently large such that $\overline{\Omega} \subseteq \widetilde \Omega$ and let $\widetilde{\mathcal{T}}_h = \{T\}$ be a shape regular, simplicial and quasi-uniform triangulation of $\widetilde\Omega$. 
The set of elements that overlap with $\Omega$ is called \emph{the active mesh}, while the set of elements that intersect with $\partial \Omega$ is called \emph{the cut mesh}:
\begin{equation*}
  \Th = \{ T \in \widetilde{\mathcal{T}}_h \mid \meas_{d}(T \cap \Omega)>0 \},
  \quad
  \ThG  = \{ T \in \widetilde{\mathcal{T}}_h \mid \meas_{d-1}(T \cap \Gamma)>0 \}.
\end{equation*}
We denote the domains corresponding to the active and cut mesh as $\OmT$ and $\OmG$, respectively, and by $\Thi = \Th \setminus \ThG$ we denote the set of interior \emph{uncut} elements with associated domain $\Omi$.
Given a subset $S_h = \{T\} \subseteq \Th$, we denote the set of interior element interfaces as $\Fh(S_h)$ and denote $\Fh = \Fh(\Th)$. A facet patch, consisting of the two adjacent elements of a facet $F$ is denoted as $\omega_F$.

The local mesh size of a mesh element $T\in\widetilde{\mathcal{T}}_h$ is defined as $h_T = \diam(T) \coloneqq \max_{\mathbf{x}_1, \mathbf{x}_2 \in T} \nrm{\mathbf{x}_1 -
  \mathbf{x}_2}_2$ and the global mesh size is $h = \max_{T\in \widetilde{\mathcal{T}}_h} h_T$.

As trimmed cut elements $T \cap \Omega,~T\in\ThG$ may become arbitrary small and may suffer from shape irregularity, we will group together sets of neighboring elements in $\Th$ in the vicinity of $\partial \Omega$ into disjoint patches. Each of these patches $\mathcal{T}_\omega$ with associated domain $\omega$ contains at least one \emph{root element} $T_\omega \in \Thi$. The interior facets of a patch are denoted as $\mathcal{F}_h^\omega:=\Fh(\mathcal{T}_\omega)$. Loosely speaking, these patches will allow us to distribute ``stability'' from interior elements to cut elements (through stabilizations acting on its interior facets). We denote by $\mathcal{C}_h =
\{\mathcal{T}_\omega\}$ the set of disjoint patches and define the set of all patch-interior facets $\mathcal{F}_h^\mathcal{C} := \bigcup_{\mathcal{T}_\omega \in \mathcal{C}_h} \Fh(\mathcal{T}_\omega)$. All elements that are not cut by $\partial \Omega$ or directly adjacent to cut elements form trivial patches $\mathcal{T}_\omega = \{T\}$. 
We will make the following assumption on $\mathcal{C}_h$.
\begin{assumption}\label{ass:patches}
  The set of disjoint patches $\mathcal{C}_h =
  \{\mathcal{T}_\omega\}$ ensures that for each element $T\in \ThG$ there is a
  patch $\mathcal{T}_\omega \in \mathcal{C}_h$ so that $T \in \mathcal{T}_\omega$.
  Further, the number of elements in a patch is uniformly bounded, s.t. from every element in a patch $\mathcal{T}_\omega$ the number of
  facets $F \in \mathcal{F}_h^\omega$ that needs to be crossed to approach
  the root element $T_\omega$ is also uniformly bounded (uniformly in $\mathcal{T}_\omega$, the
  cut position and $h$). 
\end{assumption}

For the construction of according patches we refer the interested reader to the
works of Badia and Verdugo on the \emph{aggregated} finite element method, see
e.g. \cite{BVM18}. For a sketch of the resulting mesh and patch configurations
see also \cref{fig:mesh}.

\begin{figure}
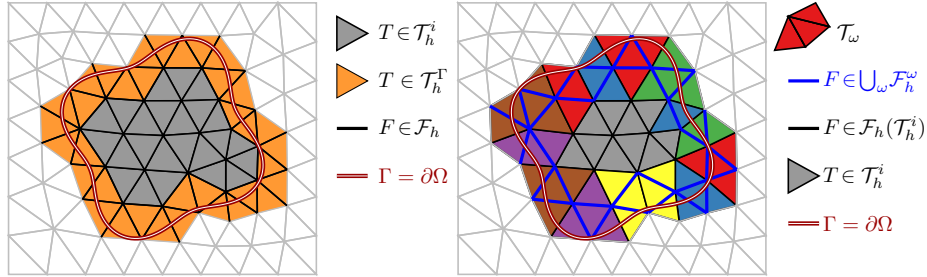

  \begin{center}\hspace*{-0.8cm}
\begin{tikzpicture}[
  dgtrig/.style={fill=Set1-I, semithick, draw=black},
  cluster0/.style={fill=Set1-A, semithick, draw=black},
  cluster1/.style={fill=Set1-B, semithick, draw=black},
  cluster2/.style={fill=Set1-C, semithick, draw=black},
  cluster3/.style={fill=Set1-D, semithick, draw=black},
  cluster4/.style={fill=Set1-G, semithick, draw=black},
  cluster5/.style={fill=Set1-F, semithick, draw=black},
  backgrtrig/.style={draw=gray!50!white, semithick},
  Gamma_h/.style={draw=none},
  dgtrig/.style={fill=Set1-I, semithick, draw=black},
  cuttrig/.style={fill=orange!80, thin, draw=gray!50!white},
  backgrtrig/.style={draw=gray!50!white, semithick},
  gpfacet/.style={draw=black, semithick},
  scale=0.9
  ]
  \begin{scope}
    \input{tikz_mesh}
    \draw[red!60!black, ultra thick, domain=0:360,smooth,variable=\phi,samples=100]
    plot ({ 2.25 + (1.35+0.2*sin(-4*\phi))*cos(\phi)},{2+(1.35+0.2*sin(-4*\phi))*sin(\phi)});
    \draw[white, ultra thin, domain=0:360,smooth,variable=\phi,samples=100]
    plot ({ 2.25 + (1.35+0.2*sin(-4*\phi))*cos(\phi)},{2+(1.35+0.2*sin(-4*\phi))*sin(\phi)});
    \end{scope}
    \begin{scope}[xshift=187,gpfacet/.style={draw=blue, very thick},
      ]
    \input{tikz_mesh_agg}
    \draw[red!60!black, ultra thick, domain=0:360,smooth,variable=\phi,samples=100]
    plot ({ 2.25 + (1.35+0.2*sin(-4*\phi))*cos(\phi)},{2+(1.35+0.2*sin(-4*\phi))*sin(\phi)});
    \draw[white, ultra thin, domain=0:360,smooth,variable=\phi,samples=100]
    plot ({ 2.25 + (1.35+0.2*sin(-4*\phi))*cos(\phi)},{2+(1.35+0.2*sin(-4*\phi))*sin(\phi)});
  \end{scope}

  \draw[dgtrig] (4.8, 3.3) -- (4.8, 3.8) -- (5.25, 3.55) --cycle;
  \node[right, scale=0.8] at (5.3, 3.55) {\color{black} $T\!\in\!\Thi$};
  \draw[cuttrig, draw=black] (4.8, 2.6) -- (4.8, 3.1) -- (5.25, 2.85) --cycle;
  \node[right, scale=0.8] at (5.3, 2.85) {\color{black} $T\in \ThG$};
  \draw[draw=black, very thick] (4.8, 2.15) -- (5.25, 2.15);
  \node[right, scale=0.8] at (5.3, 2.15) {\color{black} $F\!\in\!\Fh$};
  \draw[red!60!black, ultra thick] (4.8, 1.45) -- (5.25, 1.45);
  \draw[white, ultra thin] (4.8, 1.45) -- (5.25, 1.45);
  \node[right, scale=0.8] at (5.3, 1.45) {\color{red!60!black} $\Gamma = \partial \Omega$};
      
  \draw[cluster0] (11.2, 3.3) -- (11.4, 3.8) -- (11.65, 3.35) --cycle;
  \draw[cluster0] (11.4, 3.8) -- (11.65, 3.35) -- (12.0, 3.65) --cycle;
  \draw[gpfacet] (11.4, 3.8) -- (11.65, 3.35);
  \draw[cluster0] (11.4, 3.8) -- (12.0, 3.65) -- (11.8, 4) --cycle;
  \draw[gpfacet] (11.4, 3.8) -- (12.0, 3.65);
  \node[right, scale=0.8] at (12.0, 3.55) {\color{black} $\mathcal{T}_\omega$};

  \draw[dgtrig] (11.4, 1.2) -- (11.4, 1.7) -- (11.85, 1.45) --cycle;
  \node[right, scale=0.8] at (11.8, 1.45) {\color{black} $T\!\in \Thi $};
  \draw[draw=black, very thick] (11.4, 2.15) -- (11.85, 2.15);
  \node[right, scale=0.8] at (11.8, 2.15) {\color{black} $F\!\in\!\Fh(\Thi)$};
  \draw[blue, very thick] (11.4, 2.85) -- (11.85, 2.85);
  \node[right, scale=0.8] at (11.8, 2.85) {\color{blue} $F\!\in\!\bigcup_\omega \!\mathcal{F}_h^\omega$};
  \draw[red!60!black, ultra thick] (11.4, 0.75) -- (11.85, 0.75);
  \draw[white, ultra thin] (11.4, 0.75) -- (11.85, 0.75);
  \node[right, scale=0.8] at (11.8, 0.75) {\color{red!60!black} $\Gamma = \partial \Omega$};
\end{tikzpicture}\vspace*{-4cm}
\end{center}
\caption{Example of an unfitted geometry, the active and cut part of the
  computational mesh (left) and a set of patches as in \cref{ass:patches}.}\label{fig:mesh}
\end{figure}

Although practically relevant, in the presentation of the numerical methods and their analysis we will neglect the problem of geometry handling and approximation and assume that all geometrical operations, such as integrals on the unfitted domain $\Omega$, can be performed exactly. In \cref{sec:geom} we discuss how we deal with this problem in the numerical realization.

\subsection{Notation for finite element spaces}
On a set of elements $\mathcal{S}_h \subseteq \mathcal{T}_h$ with associated
domain $S$ we denote the space
of piecewise polynomials up to degree $k$ on that submesh as
\begin{align}
\mathbb{P}^k(\mathcal{S}_h) &:= \{ v \in L^2(S) \mid v|_T \in
  \mathcal{P}^k(T),~T\in\mathcal{S}_h\},
  \intertext{
where $\mathcal{P}^k(T)$ is the space of scalar
polynomials up to degree $k$ on $T$.
Similarly we denote the space of Raviart-Thomas functions up to degree $k$ by 
}
  \mathbb{R}\mathbb{T}^k(\mathcal{S}_h) &:= \{ v_h \in H(\div;S) \mid v_h|_T \in \mathcal{R}\mathcal{T}^k(T), T \in \mathcal{S}_h \},
\end{align}
where $\mathcal{R}\mathcal{T}^k(T) = [\mathcal{P}^k(T)]^d + \mathcal{P}^k(T)
\cdot \mathbf{x}$ is the local Raviart-Thomas space so that $\div
\mathbb{R}\mathbb{T}^k(\mathcal{S}_h) = \mathbb{P}^k(\mathcal{S}_h)$. Note that
in $ \mathbb{R}\mathbb{T}^k(\mathcal{S}_h)$ normal-continuity holds. Breaking up
normal-continuity leads to the broken Raviart-Thomas space that we denote by
$\mathbb{R}\mathbb{T}_-^k(\mathcal{S}_h)$. 
We further define by $\Pi^{\mathbb{R}\mathbb{T},k}: [H^1(S)]^d \to \mathbb{R}\mathbb{T}^k(\mathcal{S}_h)$ and $\Pi^{Q,k}: L^2(S) \to \mathbb{P}^k(\mathcal{S}_h)$ the usual Raviart-Thomas and $L^2$ interpolation operators and drop the index $k$ if the polynomial degree is clear from the context.
The introduced spaces and operators are well-known, well-understood and can be found in standard literature, e.g. \cite{brezzi2012mixed}. 

\subsection{Notation for inner products, norms, inequalities up to constants}

By $(\cdot,\cdot)_S$ we denote the $L^2(S)$ inner product on a domain $S$ and by $\Vert \cdot \Vert_S$ the corresponding $L^2(S)$ norm.
We use the notation $a \lesssim b$ if there exists a constant $c>0$, independent of the mesh size and mesh-interface cut position, such that $a\leq cb$. Similarly, we use $\gtrsim$ if $a\geq cb$, and $a\simeq b$ if both $a\lesssim b$ and $b \lesssim a$ holds.

\subsection{Model problem}
\noindent We consider Poisson's equation on $\Omega$ with Dirichlet boundary conditions on $\Gamma = \partial \Omega$: For a given functions $f: \Omega \to \mathbb{R}$, $p_D: \Gamma \to \mathbb{R}$, find $p: \Omega \to \mathbb{R}$ so that 
\begin{subequations}
    \begin{align}
        -\div(\nabla p) = \hphantom{-} f \text{ in } \Omega, &\qquad
        p = p_D \text{ on } \Gamma.
        \intertext{
The problem can be rewritten as a system of first order equations: Find ${u: \Omega \to  \mathbb{R}^d}$, $p: \Omega \to \mathbb{R}$ so that 
}
        u - \nabla p = 0 \text{ in } \Omega, \qquad
        \div u = - f \text{ in } \Omega, &\qquad p = p_D \text{ on } \Gamma.
        \label{eq:mixedPoisson}
    \end{align}
  \end{subequations}
  For the numerical treatment in the following we will focus on \eqref{eq:mixedPoisson} as it allows to put an emphasis on the \emph{conservation} of the (approximation of the) flux $u$.
  
We will focus on Dirichlet boundary conditions for simplicity in the main part of this manuscript and discuss the Neumann case in \cref{sec:neumann}.

\subsection{Weak formulation}
\noindent
As a basis for a finite element discretization we consider a variational formulation of \eqref{eq:mixedPoisson}. 
With $\Sigma := H(\div,\Omega)$ and $Q := L^2(\Omega)$,  
the weak formulation of \eqref{eq:mixedPoisson} reads:
For $f \in L^2(\Omega)$, $p_D \in H^{\frac12}(\Gamma)$ find $(u,p) \in \Sigma \times Q$ s.t.
\begin{align}
  a(u,v) + b(v,p) &= \langle v\cdot n, p_D \rangle && \forall v \in \Sigma, \tag{C-a}\label{eq:weakForm:a} \\
  b(u,q) &= -(f,q)_\Omega && \forall q \in Q. \tag{C-b}\label{eq:weakForm:b}
\end{align}
where $\langle \cdot , \cdot \rangle$ is the duality pairing between $H^{-\frac12}(\Gamma)$ and $H^{\frac12}(\Gamma)$. The bilinear forms $a(\cdot,\cdot)$ and $b(\cdot,\cdot)$ are defined as
\begin{align}
  a(u,v) &= (u,v)_\Omega, \qquad 
  b(u,q) = (\div u,q)_\Omega, \quad u,v \in \Sigma, q \in Q.
\end{align}
We identify the bilinear forms $a(\cdot,\cdot)$ and $b(\cdot,\cdot)$ with the corresponding operators $a: \Sigma \to \Sigma^\ast$ and $b: \Sigma \to Q^\ast$.
\subsection{Subproblems arising from a space decomposition}
Defining the divergence-free subspace of $\Sigma$ as
\begin{equation}
    \Sz := \ker b = \{v \in \Sigma \mid b(v,q) = 0 ~\forall q \in Q\} = \{ v \in \Sigma \mid \div v = 0 \text{ in } \Omega\}
  \end{equation}
  we obtain the orthogonal space decomposition $\Sigma = \Sz \oplus_a \Sperp$ where   
  $\Sperp$ is the $L^2(\Omega)$-orthogonal (i.e. $a$-orthogonal) complement of $\Sz$.
  We can now reformulate the continuous problem as the task of finding the three components $u^0 \in \Sz$, $u^\perp \in \Sperp$ and $p \in Q$ solving three subproblems, s.t. $(u^0 + u^\perp,p)$ solves \eqref{eq:weakForm:a}--\eqref{eq:weakForm:b}: 
\begin{align}
  &\text{Find } u^0\, \in \Sz\,,  \text{s.t. } &a(u^0,v^0) &= (v^0 \cdot n, p_D)_{\Gamma} && \forall v^0 \in \Sz, \tag{C-1} \label{eq:weakForm:I}\\[-0.5ex]
  &\text{Find } u^\perp \in \Sperp,  \text{s.t. } &b(u^\perp,q) &= -(f,q)_\Omega && \forall q \in Q, \tag{C-2} \label{eq:weakForm:II}\\[-0.5ex]
  &\text{Find } p^{\hphantom{\perp}} \in Q^{\hphantom{\perp}}, \text{s.t. } &b(v^\perp,p) &= (v^\perp \cdot n,p_D)_{\Gamma} - a(u^\perp,v^\perp) && \forall v^\perp \in \Sperp. \tag{C-3} \label{eq:weakForm:III}
\end{align}
We will exploit this characterization of the mixed formulation in the remaining considerations of this work to investigate discrete variational formulations, especially with respect to their consistency. 

\section{Unfitted discretizations} \label{sec:discretization}
In this section we want to derive new unfitted discretizations for \eqref{eq:weakForm:a}--\eqref{eq:weakForm:b} with a focus on preserving conservation properties. 
In \cref{sec:restrictedmixedfem} we start with a straight-forward unfitted mixed FEM that is obtained by directly posing \eqref{eq:weakForm:a}--\eqref{eq:weakForm:b} on finite dimensional subspaces of $\Sigma$ and $Q$ and work out the need for corrections.
We will then adapt the discrete variational formulation by replacing $a(\cdot,\cdot)$, $b(\cdot,\cdot)$ and $f$ by discrete versions $a_h(\cdot,\cdot)$, $b_h(\cdot,\cdot)$ and $f_h$. We will introduce these modifications successively in Sections \ref{sec:newbh}--\ref{sec:gp} yielding the new unfitted mixed formulation presented in \cref{sec:unfmixed}. In \cref{sec:consistency} we discuss the impact of the modifications introduced in terms of consistency and we relate the method to a similar approach in the literature in \cref{sec:relation:to:others}.

\subsection{Restricted mixed FEM} \label{sec:restrictedmixedfem}
We set
$
\Sh = \mathbb{R}\mathbb{T}^k(\mathcal{T}_h) \subset H(\div,\OmT)
$
and
$
Q_h = \mathbb{P}^k(\Th) \subset L^2(\OmT)
$
\noindent and start with a straight-forward application of mixed finite element methods in an unfitted setting. We take the finite element spaces w.r.t. the active mesh $\Th$, but restrict the integrals to $\Omega$. Hence, we denote this method as \emph{restricted} mixed FEM.
The discretization reads as: Find $(u_h,p_h) \in \Sh \times Q_h$ such that for all $(v_h,q_h) \in \Sh \times Q_h$ there holds
\begin{align} \label{eq:mixedprob} \tag{R}
    a(u_h,v_h) + b(v_h,p_h) = (v_h \cdot n,p_D)_{\Gamma}, \qquad %
    b(u_h,q_h) = -(f,q_h)_\Omega. %
\end{align}

\noindent 
According to the orthogonal decomposition 
$
    \Sh = \Shz \oplus_a \Shperp
$
with the discrete subspaces
$
    \Shz := \ker b 
    = \{ v_h \in \Sh \mid \div v_h = 0\}$ and $\Shperp := \{v_h \in \Sh \mid a(v_h,w_h) = 0 ~ \forall w_h \in \Shz\}$,
we can reinterpret problem \eqref{eq:mixedprob} as the three subproblems: 
\begin{align}
  &\text{Find } u_h^{0\,} \in \Shz, \text{s.t. } &a(u_h^0,v_h^0) &= (v_h^0 \cdot n, p_D)_{\Gamma} && \forall v_h^0 \in \Shz, \tag{R-1} \label{eq:mixedprob:I}\\[-0.5ex]
  &\text{Find } u_h^\perp \in \Shperp, \text{s.t. } &b(u_h^\perp,q_h) &= -(f,q_h)_\Omega && \forall q_h \in Q_h, \tag{R-2} \label{eq:mixedprob:II}\\[-0.5ex]
  &\text{Find } \,p_{h\,} \in\! Q_h, \text{s.t. } &b(v_h^\perp,p_h) &= (v_h^\perp \cdot n,p_D)_{\Gamma} - a(u_h^\perp,v_h^\perp) && \forall v_h^\perp \in \Shperp. \tag{R-3} \label{eq:mixedprob:III}
\end{align}
Let us take a look at these three subproblems to characterize the stability of \eqref{eq:mixedprob}.
Stability of subproblem \eqref{eq:mixedprob:I} follows directly from coercivity of $a(\cdot,\cdot)$ on $\Shz$ w.r.t. $\Vert \cdot \Vert_{H(\div;\Omega)}$.
With $v_h^0 = u_h^0$ in \eqref{eq:mixedprob:I} we have $\Vert u_h^0 \Vert_{H(\div;\Omega)} \leq \Vert p_D \Vert_{H^{1\!/\!2}(\Gamma)}$ and with $q_h = \div u_h^\perp \in Q_h$ in \eqref{eq:mixedprob:II} we obtain $\Vert \div u_h^\perp \Vert_{\Omega} \leq \Vert f \Vert_{\Omega}$.
This yields control of $u_h \in \Sh$ by the r.h.s. data only in the (unsatisfactorily) weak norm $\Vert u_h^0 \Vert_\Omega + \Vert \div u_h \Vert_\Omega$. To obtain control (and hence finally error bounds) in $\Vert \cdot \Vert_\Omega$ (or $\Vert \cdot \Vert_{H(\div;\Omega)}$) we are missing an estimate $\Vert \div v_h^\perp \Vert_{\Omega} \gtrsim \Vert v_h^\perp \Vert_{\Omega}$ for all $v_h \in \Shperp$. %
More obviously problematic is the stability of \eqref{eq:mixedprob:III} which requires a stability result of the form 
\begin{equation} \label{eq:rmfem:infsup}
  \inf_{q_h \in Q_h \setminus \{0\}} \sup_{v_h \in \Shperp\setminus \{0\}} \frac{b(v_h,q_h)}{\Vert v_h \Vert_{H(\div;\Omega)} \Vert q_h \Vert_{\Omega}} > c_{R}
\end{equation}
with a constant $c_R > 0$. One can indeed easily show that \eqref{eq:rmfem:infsup} holds, cf. \cref{lem:rmfem:infsup} in \cref{sec:rmfem:infsup}. With standard saddle point theory, cf. \cite{BVM18}, this implies unique solvability of \eqref{eq:mixedprob} and stability in $\Vert u_h \Vert_{H(\div;\Omega)} + \Vert p_h \Vert_{\Omega}$ for a constant $c_R$. However, the constant $c_R$ will \emph{not be robust w.r.t. the cut position} and may become arbitrary small, which renders the method practically useless.

In the next sections we derive a modified scheme tailored to fix the problem with the inf-sup constant.

\subsection{The discrete bilinear form $b_h(\cdot,\cdot)$} \label{sec:newbh}
Motivated by the insufficient robustness of $b(\cdot,\cdot)$ w.r.t. the inf-sup condition, we modify the bilinear form $b(\cdot,\cdot)$ by substituting the previous domain of integration $\Omega$ with the domain of the active mesh $\OmT$:
\begin{equation}
    b_h(v_h,p_h) := (\div v_h,p_h)_{\OmT}. 
\end{equation}
The same type of modification for the setting of a low-order Stokes discretization has been considered in \cite{burman2022cut}.
By construction, this form does not depend on the local cut configuration and will allow to provide control on the active mesh. Further, there holds $\Shz = \ker b = \ker b_h$ so that the reduced problem on $\Shz$ like \eqref{eq:mixedprob:I} will not be affected by this change.
Subproblems \eqref{eq:mixedprob:II} and \eqref{eq:mixedprob:III} will obviously
be affected severely. For \eqref{eq:mixedprob:II} we will change the r.h.s. to
adapt for the change in the integration domain below in \cref{sec:discretefh}.
For \eqref{eq:mixedprob:III} we will accept the fact that the resulting discrete
approximation of the primal unknown $p$ will be inconsistent. In
\cref{sec:consistency} we will make sense of the inconsistent discrete
approximation of $p$ and will afterwards re-obtain a consistent approximation from post-processings discussed in \cref{sec:post-process}. 

\subsection{The extended r.h.s. data $f_h$} \label{sec:discretefh}
In the discretization below we will replace $b(\cdot,\cdot)$ by $b_h(\cdot,\cdot)$ which changes the integration domain for the discrete conservation equation. Correspondingly, we need to adjust the r.h.s. integral and require a proper integrand $f_h: \OmT \to \mathbb{R}$, which we assume to be known.
This is reasonable, as we can typically consider to be in one of two situations:
\begin{itemize}
  \item 
    In the first case the function $f$ is given as a finite element function on the background mesh, because it stems from a coupling to a different (unfitted) discrete field or it is prescribed in closed form anyway.
  \item In the second case $f: \Omega \to \mathbb{R}$ is not given with a proper extension to $\OmT$ and the discrete extension has to be constructed explicitly, cf. \cref{sec:gp}
\end{itemize}    
Why an extension of $f$ to $\OmT$ is necessary is further explained in \cref{rem:need:ext}.

\subsection{Ghost penalty stabilization} \label{sec:gp}
For the restricted mixed FEM we have seen that the bilinear form $a(\cdot,\cdot)$ suffices for stability w.r.t. $L^2(\Omega)$, respectively $H(\div;\Omega)$. 
However, there are good reasons to ask for control on the whole active mesh, i.e. in $L^2(\OmT)$, respectively $H(\div;\OmT)$. For instance, this can be crucial for conditioning or finite element post-processing. This control can be obtained by adding a proper stabilization term to the bilinear form $a_h(\cdot,\cdot)$, the \emph{ghost penalty} (GP) stabilization, cf. \cite{Bur10}.
Another application of the GP stabilization in the context of this work is to obtain a proper extension of the r.h.s. data $f_h$ to the whole active mesh $\OmT$ in the case where $f$ is only given on $\Omega$, cf. \cref{sec:discretefh}.

We denote a GP stabilization, which is a symmetric non-negative bilinear form for a generic piecewise polynomial finite element space $R_h \subseteq \mathbb{P}^k(\Th)$ (respectively $R_h \subseteq [\mathbb{P}^k(\Th)]^d$) by $j_h(\cdot,\cdot)$ with corresponding semi-norm $|\cdot|_{j}$.
Although different GP formulations exist, we restrict to the set of methods that split into facet contributions, so that
$j_h(u_h,v_h) = \sum_{F \in \mathcal{F}_h^\ast} j_F(u_h,v_h)$, where
$\mathcal{F}_h^\ast$ is a set of facets and $j_F(\cdot,\cdot)$ is a
stabilization term acting on facet $F$ or the facet patch $\omega_F$. 
We make the following assumptions, similar to e.g. \cite[Assumptions E2,E3]{gurkan2020}:
\begin{assumption}\label{ass:gp}
    Let $T_1, T_2$ be the two adjacent elements to a facet $F \in \mathcal{F}_h^\ast$ and 
    $r_h \in \mathbb{P}^k(\{T_1,T_2\})$ be an element-wise polynomial function up to degree $k$.     
    There holds the local stability property
    \begin{equation} \label{eq:gp0} \tag{GP1a}
      j_F(r_h,r_h) \gtrsim \inf_{v_h \in \mathcal{P}^{k}(\omega_F)} \Vert v_h - r_h \Vert^2_{\omega_F}
    \end{equation}
    with (hidden) constants independent of $h$ or $F$, i.e. the GP term is lower bounded by the distance to a patchwise polynomial of the same degree.  
    We further assume that $j_F(\cdot,\cdot)$ is continuous w.r.t. the $L^2$-norm so that a consequence of \eqref{eq:gp0} is that for $r_h \in R_h$ there holds the global stability property
    \begin{equation} \label{eq:gp} \tag{GP1b}
        \Vert r_h \Vert_{\Omega} + \vert r_h \vert_{j} \simeq \Vert r_h \Vert_{\OmT}.
      \end{equation}
      Further for $r \in H^m(\OmT) \subset \mathcal{V}$ for some Sobolev space $\mathcal{V}$ and $\mathcal{I}_R : \mathcal{V} \to R_h$, $m \in \{0,\dots,k+1 \}$ a suitable interpolation operator into $R_h$ there holds the weak consistency property 
      \begin{equation} \label{eq:gp2} \tag{GP2}
        \vert \mathcal{I}_R r \vert_{j} 
        \lesssim 
        h^{m} \vert r \vert_{H^m(\Omega^{\mathcal{T}}_j)}\leq 
        h^{m} \vert r \vert_{H^m(\OmT)}
    \end{equation} 
    where $\Omega^{\mathcal{T}}_j \subset \OmT$ is the subdomain of elements on which the GP stabilization acts.
\end{assumption}

In the remainder we set $\mathcal{F}_h^\ast =
\mathcal{F}_h^{\mathcal{C}}$ which limits the amount of additional couplings that is introduced by the GP mechanism, cf. also \cite{BNV22}.
Two popular candidates for $j_F(\cdot,\cdot)$ that fulfill \cref{ass:gp} are given in \cref{app:gp}.
Let us note that the GP stabilization bilinear form may depend on the polynomial degree of the finite element space $R_h$. Furthermore, with $\mathcal{F}_h^\ast =
\mathcal{F}_h^{\mathcal{C}}$ the domain $\Omega^{\mathcal{T}}_j$ in \eqref{eq:gp2} decomposes into the disjoint nontrivial patches of $\mathcal{C}_h$, and we can deduce that a function $r$ that is a polynomial up to degree $k$ on each of these patches is in the kernel of $j_h(\cdot,\cdot)$ as $r = \mathcal{I}_R r$ and $|r|_{H^{k+1}(\Omega_j^{\mathcal{T}})} = 0$.    

We will make use of the GP stabilization in the following in up to three occasions:
First, to potentially enlarge the domain of control. For $\gamma_u \ge 0$, we define
\begin{equation} \label{def:ah}
    a_h(u_h,v_h) := a(u_h,v_h) + \gamma_u j_h(u_h,v_h), \quad u_h,v_h \in \Sh.
\end{equation}
Note that %
we also allow $\gamma_u = 0$, i.e. the unstabilized form where $a_h(\cdot,\cdot) = a(\cdot,\cdot)$. 

Second, for the generic construction of $f_h$ -- if needed -- we propose the following discrete variational problem: To $\gamma_f > 0$ find $f_h \in Q_h^{k_f} = \mathbb{P}^{k_f}(\Th)$ with $k_f \in \mathbb{N}_0$, s.t.
  \begin{align} \label{eq:fh}
    (f_h, q_h)_{\Omega} + \gamma_f j_h(f_h, q_h) = (f,q_h)_{\Omega}, \quad \forall ~ q_h \in Q_h^{k_f}.
  \end{align}
The obvious default choice for $k_f$ is $k$ so that $\div \Sh = Q_h = Q_h^k = Q_h^{k_f}$, we will, however, also allow $k_f \neq k$ in the following.
Note that the matrix corresponding to the l.h.s. is block-diagonal, with each block corresponding to the unknowns of one patch. Note that this is a specific consequence of the choice $\mathcal{F}_h^\ast = \mathcal{F}_h^{\mathcal{C}}$.
The solution of \eqref{eq:fh} can then be obtained by solving a sequence of local problems (in parallel).

Finally, in \cref{sec:patchpost-process} we will make use of the GP stabilization for one of our post-processing schemes again.

\subsection{The unfitted mixed FEM} \label{sec:unfmixed}
\noindent Bringing the previous motivation into play, we formulate the following unfitted mixed FEM problem: Find $(u_h,\bar p_h) \in \Sh \times Q_h$ such that 
  \hypertarget{def:M}{
    \begin{align}
         a_h(u_h,v_h) + b_h(v_h,\bar p_h) &= (v_h \cdot n,p_D)_{\Gamma} && \forall v_h \in \Sh, \label{eq:new:mixedprob:a} \tag{M-a}\\
         b_h(u_h,q_h) &= -(f_h, q_h)_{\OmT} && \forall q_h \in Q_h.  \label{eq:new:mixedprob:b} \tag{M-b} \\
        \intertext{\noindent 
        As before, we split the problem into three subproblems.
        However, with $a(\cdot,\cdot)$ being replaced by $a_h(\cdot,\cdot)$ we also change the definition of the discrete space $\Shperp$ to
        $
        \Shperp = \{ v_h \in \Sh \mid a_h(v_h,w_h) = 0 \quad \forall w_h \in \Shz \}.
        $        
        }
    \text{Find } u_h^0 \, \in \Shz \text{, s.t. } a_h(u_h^0,v_h^0) &= (v_h^0 \cdot n, p_D)_{\Gamma} && \forall v_h^0 \in \Shz, \tag{M-1} \label{eq:new:mixedprob:I}\\[-0.5ex]
    \text{Find } u_h^\perp \in \Shperp\text{, s.t. } b_h(u_h^\perp,q_h) &= -(f_h,q_h)_{\OmT} && \forall q_h \in Q_h, \tag{M-2} \label{eq:new:mixedprob:II}\\[-0.5ex]
    \text{Find } \bar p_h^{\hphantom{\perp}} \in Q_h \text{, s.t. } b_h(v_h^\perp, \bar p_h) &= (v_h^\perp \cdot n,p_D)_{\Gamma} - a_h(u_h^\perp,v_h^\perp) && \forall v_h^\perp \in \Shperp. \tag{M-3} \label{eq:new:mixedprob:III}
\end{align}
  }
The advantage of this discretization over the restricted mixed FEM \eqref{eq:mixedprob} comes with $b_h(\cdot,\cdot)$ providing proper control on the whole active mesh. This comes at the price of an inconsistency that is discussed in the next section. 
\begin{remark} \label{rem:need:ext}
  From \eqref{eq:new:mixedprob:b} it becomes clear that we need a proper extension of the r.h.s. data $f$ to $\OmT$ to obtain a consistent discretization. If $f$ is only extended trivially, i.e. by zero, outside of $\Omega$ the discrete solution would face the approximation problem $\div u_h = \chi_{\Omega} \cdot f$ with a discontinuous r.h.s. (unless $f|_{\partial \Omega}\equiv 0$) on $\OmT$ which would result in deteriorated convergence order of expected order $\mathcal{O}(h^{\frac12})$. 
\end{remark} 

\subsection{Consistency of the unfitted mixed FEM} \label{sec:consistency}
We will analyze potential inconsistencies resulting from our modifications by sequentially reviewing problems \eqref{eq:new:mixedprob:I}, \eqref{eq:new:mixedprob:II}, and \eqref{eq:new:mixedprob:III}.
While \eqref{eq:new:mixedprob:I} is affected -- if at all -- by the weakly consistent GP term, in \eqref{eq:new:mixedprob:II}, we avoid inconsistencies -- up to a possible approximation error $f_h \approx f$ -- in the mass balance by replacing $\div v_h = f$ on $\Omega$ by $\div v_h = f_h$ on the larger domain $\OmT$.
The major change in consistency comes with
\eqref{eq:new:mixedprob:III}. 
Indeed, $\bar p_h$ is not a good approximation of
$p$ on cut elements. However, as $\bar{p}_h$ as the solution of \eqref{eq:new:mixedprob:III} does not influence \eqref{eq:new:mixedprob:I} or \eqref{eq:new:mixedprob:II} this inconsistency does not affect $u_h \in \Sh$. 
 We can also make some sense of the solution $\bar{p}_h$ of
 \eqref{eq:new:mixedprob:III} as a reasonable approximation of a slightly
 different field. For a discrete function in $L^2(\Omega)$ let us define the
 $L^2(\OmT)$ projection into $Q_h$ of the extension by zero by $\mathcal{E}_h^0: L^2(\Omega) \to Q_h, q \mapsto \Pi^Q (\chi_{\Omega} \cdot q)$, so that
\begin{equation} \label{eq:extbyzeroh}
  ( \mathcal{E}_h^0 q, r_h)_{\OmT} = (\chi_{\Omega} \cdot q,r_h)_{\OmT} = (q,r_h)_{\Omega} \quad \forall r_h \in Q_h.
\end{equation}
This allows us to write $b(v_h,q) = b_h(v_h,\mathcal{E}_h^0 q)$ and interpret
$\bar p_h$ as an approximation of the extension by zero of $p$, $\bar p_h
\approx \mathcal{E}_h^0 p$. A similar interpretation of $\bar p_h$ has been given in \cite{burman2022cut} for their unfitted discretization of the Stokes problem.
Indeed, the following consistency relation holds for all $w_h \in \Sh$:
\begin{align}
    a_h(u_h,w_h) & -a(u,w_h)  = a(u_h-u,w_h) + \gamma_u j_h(u_h,w_h) \nonumber \\ &= b(w_h,p) - b_h(w_h,\bar p_h) 
    = b_h(w_h,\mathcal{E}_h^0 p - \bar p_h) = (\div w_h, \mathcal{E}_h^0 p - \bar p_h)_{\OmT} \label{eq:consistency}
\end{align}
This characterization will also be crucial in the numerical analysis below.

\begin{remark} 
    We note that $\mathcal{E}_h^0$ is invertible (as long as $\meas_d(T\cap\Omega) > 0$ for all $T \in \ThG$), but the norm of the inverse of $\mathcal{E}_h^0$ strongly depends on the cut configuration, making it necessary to avoid including $(\mathcal{E}_h^0|_{Q_h})^{-1}$ in a numerical realization. 
\end{remark}

In mixed finite element methods, post-processing is often used to obtain an additional order of accuracy. We will use post-processing here as well, but to achieve two goals at the same time: First, to repair the inconsistency of $\bar{p}_h$ on cut elements, and second, to obtain an additional order of accuracy for uncut and cut elements. Two possible post-processing schemes are introduced in \cref{sec:post-process} after the analysis of the main method in \cref{sec:analysis1}. 

\subsection{Relation to the method in Frachon et al. (\cite{frachon2022divergence})} \label{sec:relation:to:others}
In \cite{frachon2022divergence} unfitted mixed FE methods for the Darcy interface problem are introduced that are based on a stabilization of the bilinear form $b(\cdot,\cdot)$ based on the GP mechanisms. The suggested scheme, translated to the one-domain case, leads to the following discrete problem:
Find $(u_h,p_h) \in \Sh \times Q_h$ such that
\begin{subequations}\label{eq:frachmethod}
\begin{align}
    a_h(u_h,v_h) + b_h^\ast(v_h, p_h) &= (v_h \cdot n,p_D)_{\Gamma} && \forall v_h \in \Sh,  \\ %
    b_h^\ast(u_h,q_h) &= -(f, q_h)_{\Omega} && \forall q_h \in Q_h, \label{eq:frachmethodb} %
\end{align}        
\end{subequations}
where $b_h^\ast(u_h,q_h) = (\div u_h,q_h)_\Omega + \gamma_{\div} j_h(\div u_h,q_h)$.
This scheme is closely related to the method \eqref{eq:new:mixedprob:a}--\eqref{eq:new:mixedprob:b} with $f_h$ from \eqref{eq:fh} and $\gamma_f = \gamma_{\div}$, as we will show in the following lemma.

\begin{lemma} \label{lem:relation:to:others}
    The method \eqref{eq:new:mixedprob:a}--\eqref{eq:new:mixedprob:b} with $f_h$ from \eqref{eq:fh} and $\gamma_f = \gamma_{\div}$ and the method \eqref{eq:frachmethod} from \cite{frachon2022divergence} yield the same solution $u_h \in \Sh$ and the same solution $p_h \in Q_h$ (and $\bar p_h \in Q_h$ respectively) away from cut elements and their direct neighbors.
\end{lemma}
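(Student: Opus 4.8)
The plan is to exploit the subproblem decomposition \eqref{eq:new:mixedprob:I}--\eqref{eq:new:mixedprob:III} that both schemes inherit, and to reduce the comparison to a handful of localized identities; throughout write $\gamma := \gamma_f = \gamma_{\div} > 0$. First I would record that the two methods share the same divergence-free space. The inclusion $\Shz \subseteq \ker b_h^\ast$ is immediate, and for the reverse, testing $b_h^\ast(v_h,q_h)=0$ with $q_h=\div v_h$ gives $\|\div v_h\|_\Omega^2 + \gamma|\div v_h|_j^2 = 0$, so the global stability \eqref{eq:gp} forces $\div v_h = 0$ on $\OmT$; hence $\ker b_h^\ast = \Shz = \ker b_h$. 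Consequently both methods employ the same decomposition $\Sh = \Shz \oplus_{a_h} \Shperp$, and since \eqref{eq:new:mixedprob:I} involves only $a_h$, $\Shz$ and the boundary datum, it is literally identical for both; thus the components $u_h^0$ coincide.

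The heart of the argument is the equivalence of the second subproblems, which will force the $u_h^\perp$-components (and hence $u_h = u_h^0 + u_h^\perp$) to agree. For the present method, \eqref{eq:new:mixedprob:b} states $(\div u_h,q_h)_{\OmT} = -(f_h,q_h)_{\OmT}$ for all $q_h \in Q_h$; since $\div u_h, f_h \in Q_h = \mathbb{P}^k(\Th)$ and the test space is the full space, this is the pointwise identity $\div u_h = -f_h$ on $\OmT$. Substituting this into the defining relation \eqref{eq:fh} for $f_h$ and using $\gamma_f = \gamma_{\div}$ turns \eqref{eq:fh} into exactly $(\div u_h,q_h)_\Omega + \gamma_{\div} j_h(\div u_h,q_h) = -(f,q_h)_\Omega$, i.e.\ $b_h^\ast(u_h,q_h) = -(f,q_h)_\Omega$, which is \eqref{eq:frachmethodb}. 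The converse is symmetric: from \eqref{eq:frachmethodb} and \eqref{eq:fh} one obtains $(\div v_h + f_h, q_h)_\Omega + \gamma_{\div} j_h(\div v_h + f_h, q_h) = 0$ for all $q_h$, and testing with $q_h = \div v_h + f_h$ together with \eqref{eq:gp} recovers $\div v_h = -f_h$ on $\OmT$. The two second subproblems therefore have identical solution sets in $\Shperp$, so $u_h^\perp$ — and thus the entire flux $u_h$ — is the same in both methods.

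For the scalar variables I would subtract the two third subproblems. Because $u_h^\perp$ and $a_h$ now agree, the right-hand sides of \eqref{eq:new:mixedprob:III} and of the momentum equation in \eqref{eq:frachmethod} coincide, leaving $(\div v_h^\perp,\bar p_h)_{\OmT} = (\div v_h^\perp,p_h)_\Omega + \gamma_{\div} j_h(\div v_h^\perp,p_h)$ for all $v_h^\perp \in \Shperp$. Since $\div|_{\Shperp}\colon \Shperp \to Q_h$ is a bijection (its kernel is $\Shperp \cap \Shz = \{0\}$ and $\div \Sh = Q_h$), the quantity $r_h := \div v_h^\perp$ ranges over all of $Q_h$, giving the key relation
\[
  (r_h,\bar p_h)_{\OmT} = (r_h,p_h)_\Omega + \gamma_{\div}\, j_h(r_h,p_h) \qquad \forall\, r_h \in Q_h.
\]
I would then localize. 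Let $T$ lie away from the cut region, i.e.\ in a trivial patch $\{T\}$, so that $T \in \Thi$ and, by disjointness of the patches, $T$ belongs to no $\omega_F$ with $F \in \mathcal{F}_h^{\mathcal{C}}$. Choosing $r_h$ supported on $T$, the stabilization drops out (every $j_F$, $F\in\mathcal{F}_h^{\mathcal{C}}$, sees $r_h \equiv 0$ on its patch), while $\OmT$ and $\Omega$ coincide on the uncut $T$; hence $(r_h,\bar p_h)_T = (r_h,p_h)_T$ for all $r_h \in \mathbb{P}^k(T)$, and $r_h = (\bar p_h - p_h)|_T$ yields $\bar p_h = p_h$ on $T$, which is precisely the claimed agreement away from cut elements and their direct neighbors.

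I expect the main obstacle to be the second paragraph: making rigorous that the \emph{independently} precomputed extension $f_h$ of \eqref{eq:fh}, recombined with the active-mesh constraint $\div u_h = -f_h$, reproduces \emph{exactly} the ghost-penalty-stabilized constraint $b_h^\ast$ of \cite{frachon2022divergence}. The remaining ingredients — the shared kernel $\Shz$ and the localization of the pressure — are comparatively routine once the locality of $j_h$ on $\mathcal{F}_h^{\mathcal{C}}$ and the coincidence $\OmT = \Omega$ on uncut elements are invoked.
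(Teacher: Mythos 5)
Your proof is correct and follows essentially the same route as the paper's: both arguments combine \eqref{eq:frachmethodb} with \eqref{eq:fh} (using $\gamma_f=\gamma_{\div}$ and positive-definiteness via \eqref{eq:gp}) to conclude that either scheme forces the pointwise identity $\div u_h = -f_h$ on $\OmT$, so that the shared kernel decomposition and the common subproblem \eqref{eq:new:mixedprob:I} yield the same flux $u_h$. Your handling of the scalar variable --- subtracting the third subproblems, invoking bijectivity of $\div\colon\Shperp\to Q_h$, and localizing with $r_h$ supported on a trivial-patch element where the ghost-penalty terms vanish and $T\subset\Omega$ --- is a more explicit (and welcome) rendering of the paper's terse statement that the local problems for $p_h$ and $\bar p_h$ coincide away from cut elements and their direct neighbors.
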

\begin{proof}
  From the definition of \eqref{eq:fh} we obtain in \eqref{eq:frachmethodb} that 
  $$ (\div u_h,q_h)_\Omega + \gamma_{\div} j_h(\div u_h,q_h) = - (f_h,q_h)_\Omega - \gamma_{f} j_h(f_h,q_h) \quad \forall q_h \in Q_h.
$$
With $q_h = \div u_h \in Q_h$ and $\gamma_{\div} = \gamma_f$ we obtain that $\div u_h$ is uniquely determined and the unique solution for $\div u_h$ is $\div u_h = f_h$ in $\OmT$. The same obviously holds for \eqref{eq:new:mixedprob:b}. The space decomposition as well as problem \eqref{eq:new:mixedprob:I} are the same for both problems so that $u_h$ is the same in both problems. On elements away from cut element and their direct neighbors, the local problems to solve for $p_h$ and $\bar p_h$, respectively, are also the same yielding the same pressure approximations there. Note that on cut elements and direct neighbors in \cite{frachon2022divergence} there is a GP term active which is not acting on $\bar p_h$ so that in general $\bar p_h \neq p_h$ on these elements.
\end{proof}
\begin{remark}[Patch-local integral conservation property]
  Let us note that for both methods, as well as for the method in \cite{puppi2021cut} with a patch-localized version of the ghost penalty there holds the patch-local integral conservation property $\int_{\Omega \cap \omega} \div u_h \, dx = \int_{\Omega \cap \omega } f \, dx$ for all ghost penalty patches (or interior elements) $\omega$ as the ghost penalty terms vanish for the choice $q_h = \chi_{\omega}$ with $\chi_{\omega}$ the indicator function to the patch $\omega$.
\end{remark}

Note that the previous lemma together with the subsequent analysis of the method \eqref{eq:new:mixedprob:a}--\eqref{eq:new:mixedprob:b} in \cref{sec:analysis1} also implies a convergence analysis of the method \eqref{eq:frachmethod}. This way, the analysis presented here slightly extends the analysis in \cite{frachon2022divergence}, especially with a different viewpoint on the LBB-analysis and a duality result. Further, the analysis reveals that the post-processings proposed below can also be applied for the method \cite{frachon2022divergence}. Note that although the approaches are very similar, the numerical computation of \eqref{eq:frachmethod} comes at a slightly higher computational cost as the GP stabilization term in $b_h^\ast(\cdot,\cdot)$ introduces additional couplings and thus increases the number of non-zero entries in the system matrix, cf. \cref{sec:compstab}. In contrast the method \eqref{eq:new:mixedprob:a}--\eqref{eq:new:mixedprob:b} applies -- if necessary -- 
a sequence of patch-wise GP stabilized $L^2$ projections that are small cheap computations that can be executed in parallel. 
We numerically verified the equivalence of the solutions as stated in \cref{lem:relation:to:others} up to round-off errors. The interested reader is invited to make use of the reproduction data, cf. \hyperlink{sec:dataavail}{section on data availability}, which includes the implementation of both methods. 
 
\section{A-priori error analysis of the unfitted mixed method \eqref{eq:new:mixedprob:I}--\eqref{eq:new:mixedprob:III}} \label{sec:analysis1}
After some preliminaries on extensions, the approximation of the r.h.s. and the norms considered in the analysis in \cref{sec:analysis:preliminaries} and \cref{sec:analysis:norms}, we will discuss an a priori error analysis of the discretization \eqref{eq:new:mixedprob:a}--\eqref{eq:new:mixedprob:b} with a focus on $u_h$ in \cref{sec:analysis:apriori1}. In \cref{sec:analysis:post-proc} we then analyse the accuracy of post-processings for $p_h$.

\subsection{Preliminaries: Sobolev extensions and discrete extension $f_h$ of $f$} \label{sec:analysis:preliminaries}
First, to make sense of $u$, $p$ and $f$ on $\OmT$ we will assume the existence of proper extensions to $\OmT$.
We use the notation for a bounded linear extension operator $\mathcal{E}:\,
H^{m}(\Omega) \to H^{m}(\OmT),~m \in \mathbb{N}$, but note that the extension is
not necessarily unique.
A possible extension operator is discussed for instance in \cite[Theorem II.3.3]{GaldibookNS}.
We introduce $\pe = \mathcal{E} p$ and derive from this $\ue = \nabla \pe$ and
$\fe = - \div \ue$, i.e. the extensions of $u$, $f$ and $p$ are chosen consistently.
The need for an approximation of an extension of $f$ on $\OmT$ in an unfitted setting has a significant impact on the regularity requirements in the analysis compared to geometrically fitted mixed methods. While this affects only the regularity of $f$ at first glance, the regularity of $u$ and $p$ is affected through the PDE equations $- \div u = f$, $u = \nabla p$, especially there holds $\Vert f \Vert_{H^{\ell}(\Omega)} = \Vert \div u \Vert_{H^{\ell}(\Omega)} = \Vert \Delta p \Vert_{H^{\ell}(\Omega)} \lesssim \Vert p \Vert_{H^{\ell+2}(\Omega)}$ for $\ell \in \mathbb{Z}$.
We will discuss the difference to geometrically fitted methods in that regard in more detail in the next remark. 

\begin{remark} \label{rem:freg}
  The approximation of the extension of $f$ on $\OmT$ leads to a subtle but important difference compared to most geometrically fitted mixed methods. In geometrically fitted mixed methods, one typically has $ f_h = \Pi^Q f = - \Pi^Q \div u = - \div u_h$ on $\OmT$ so that $\div u - \div u_h$ is $L^2$-orthogonal to $Q_h$. Furthermore, the Raviart-Thomas interpolator $v_h = \PiRT u$ also has $\div v_h = \div \PiRT u = \Pi^Q \div u$ so that $\div u_h - \div v_h = 0$. This is in general not the case for solutions to \eqref{eq:new:mixedprob:a}--\eqref{eq:new:mixedprob:b} as $\div u_h = f_h \neq \Pi^Q \fe$. In the subsequent analysis, we will hence observe terms related to $f_h - \fe$ and $\div u_h - \div \ue$, respectively, showing up in several estimates. 
  To get proper bounds for $f_h - \fe$ we will require one additional order of regularity for $f$ compared to standard literature for geometrically fitted problems. Regularity of $f$ can be implied from regularity of $p$ (and $u$) and it would suffice to assume one additional order of regularity for $p$ and $u$ compared to standard literature to obtain optimal order results. However, we will distinguish the regularity for $f=\div u$ and $(u,p)$ for the most part of the analysis to keep the origin of the additional regularity requirement transparent.
\end{remark}

We formulate an explicit assumption on the approximation of $\fe$ by $f_h$:
\begin{assumption}\label{ass:fh}
    We assume that a discrete approximate extension of the source function $f$
    from $\Omega$ to $\OmT$ is given by $f_h: \OmT \to \mathbb{R}$ and that it
    is a good approximation to $\fe := -\div \ue$, so that
    there holds for $r \in  \{ 0, .., k_f+1\}$ with $k_f \in \mathbb{N}_0$
    \begin{equation}\label{eq:ass:fh} \tag{$E_f$}
        \Vert f_h - \fe \Vert_{\OmT} + h^{-1} \Vert f_h - f \Vert_{-2} \lesssim h^{r} \Vert f \Vert_{H^{r}(\Omega)}
    \end{equation}
    where the $-2$-norm denotes the operator norm for functionals over $H^2(\Omega)\cap H^1_0(\Omega)$.\footnote{This norm is weaker then the  $H^2(\Omega)'$-norm and stronger then the $H^{-2}(\Omega)$-norm. } 
\end{assumption}

\begin{lemma}\label{lem:fh:stabl2}
If \eqref{eq:fh} with $\gamma_f > 0$ is used to define $f_h \in Q_h^{k_f}$ \cref{ass:fh} is fulfilled.
\end{lemma}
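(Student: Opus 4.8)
The plan is to read \eqref{eq:fh} as a \emph{stabilized $L^2$-projection} and to bound the two contributions in \eqref{eq:ass:fh} separately: the $L^2(\OmT)$-term by a Céa-type argument and the negative-norm term by a duality (Aubin--Nitsche) argument. Write $m_h(r_h,s_h) := (r_h,s_h)_\Omega + \gamma_f\, j_h(r_h,s_h)$ for $r_h,s_h \in Q_h^{k_f}$. Since $\gamma_f>0$, the global stability property \eqref{eq:gp} gives coercivity $m_h(r_h,r_h) = \Vert r_h\Vert_\Omega^2 + \gamma_f |r_h|_j^2 \gtrsim \Vert r_h\Vert_{\OmT}^2$, while the assumed $L^2$-continuity of $j_h$ yields boundedness of $m_h$ w.r.t. $\Vert\cdot\Vert_{\OmT}$; thus \eqref{eq:fh} is uniquely solvable and $f_h$ is well defined. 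Throughout I denote by $\mathcal{I}_R$ the interpolation operator of \cref{ass:gp}, assumed to satisfy the standard estimate $\Vert r - \mathcal{I}_R r\Vert_{\OmT} \lesssim h^m |r|_{H^m(\OmT)}$ together with \eqref{eq:gp2}, and I use the bounded extension $\mathcal{E}$ together with $\fe = f$ on $\Omega$.

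For the $L^2$-estimate I would insert $f_I := \mathcal{I}_R\fe$ and split $\Vert f_h - \fe\Vert_{\OmT} \le \Vert f_h - f_I\Vert_{\OmT} + \Vert f_I - \fe\Vert_{\OmT}$, the last term being $\lesssim h^r\Vert f\Vert_{H^r(\Omega)}$ by interpolation and boundedness of $\mathcal{E}$. For the discrete part set $w_h := f_h - f_I$ and test the defining relation with $w_h$: using $m_h(f_h,w_h) = (f,w_h)_\Omega = (\fe,w_h)_\Omega$ one gets $m_h(w_h,w_h) = (\fe - f_I, w_h)_\Omega - \gamma_f j_h(f_I, w_h)$. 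The first summand is bounded by $\Vert\fe - f_I\Vert_{\OmT}\Vert w_h\Vert_{\OmT}$, and for the second I use $|f_I|_j = |\mathcal{I}_R\fe|_j \lesssim h^r|\fe|_{H^r(\OmT)}$ from \eqref{eq:gp2} (legal since $r\le k_f+1$) and $|w_h|_j \lesssim \Vert w_h\Vert_{\OmT}$ from $L^2$-continuity of $j_h$. Coercivity then gives $\Vert w_h\Vert_{\OmT} \lesssim h^r\Vert f\Vert_{H^r(\Omega)}$, hence $\Vert f_h - \fe\Vert_{\OmT} \lesssim h^r\Vert f\Vert_{H^r(\Omega)}$. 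As a byproduct, combining this with $|\mathcal{I}_R\fe|_j \lesssim h^r\Vert f\Vert_{H^r(\Omega)}$ and $L^2$-continuity of $j_h$ yields the sharp seminorm bound $|f_h|_j \lesssim h^r\Vert f\Vert_{H^r(\Omega)}$, which is the crucial ingredient for the duality step.

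For the negative-norm estimate I fix $\phi \in H^2(\Omega)\cap H^1_0(\Omega)$ and recall $\Vert f_h - f\Vert_{-2} = \sup_\phi (f_h - f,\phi)_\Omega/\Vert\phi\Vert_{H^2(\Omega)}$. Choosing the test function $q_h := \mathcal{I}_R\mathcal{E}\phi \in Q_h^{k_f}$, relation \eqref{eq:fh} gives $(f_h - f, q_h)_\Omega = -\gamma_f j_h(f_h,q_h)$, so that $(f_h - f,\phi)_\Omega = (f_h - f,\phi - q_h)_\Omega - \gamma_f j_h(f_h,q_h)$. The first term is bounded by $\Vert f_h - f\Vert_\Omega\,\Vert\phi - q_h\Vert_\Omega$, using $\Vert f_h - f\Vert_\Omega \le \Vert f_h - \fe\Vert_{\OmT} \lesssim h^r\Vert f\Vert_{H^r(\Omega)}$ and $\Vert\phi - q_h\Vert_\Omega \lesssim h\Vert\mathcal{E}\phi\Vert_{H^1(\OmT)} \lesssim h\Vert\phi\Vert_{H^2(\Omega)}$. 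The second term is bounded by $\gamma_f |f_h|_j\,|q_h|_j$, where the byproduct gives $|f_h|_j \lesssim h^r\Vert f\Vert_{H^r(\Omega)}$ and \eqref{eq:gp2} with $m=1$ gives $|q_h|_j \lesssim h\,|\mathcal{E}\phi|_{H^1(\OmT)} \lesssim h\Vert\phi\Vert_{H^2(\Omega)}$. Both contributions are thus $\lesssim h^{r+1}\Vert f\Vert_{H^r(\Omega)}\Vert\phi\Vert_{H^2(\Omega)}$; taking the supremum gives $\Vert f_h - f\Vert_{-2} \lesssim h^{r+1}\Vert f\Vert_{H^r(\Omega)}$, i.e. $h^{-1}\Vert f_h - f\Vert_{-2} \lesssim h^r\Vert f\Vert_{H^r(\Omega)}$, which together with the $L^2$-bound proves \eqref{eq:ass:fh}.

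The main obstacle is the extra power of $h$ in the negative norm: the naive bound $(f_h - f,\phi)_\Omega \le \Vert f_h - f\Vert_\Omega\Vert\phi\Vert_\Omega$ only delivers $h^r$, not $h^{r+1}$. The gain comes from the Galerkin-type identity $(f_h - f, q_h)_\Omega = -\gamma_f j_h(f_h, q_h)$, which trades the $L^2$-part against a stabilization term that is genuinely higher order precisely because of the sharp estimate $|f_h|_j \lesssim h^r\Vert f\Vert_{H^r(\Omega)}$ obtained in the $L^2$-step; this is why the two estimates must be done in this order. Note that only $H^1$-approximation of $\phi$ is required, so the argument is valid for every $k_f \in \mathbb{N}_0$.
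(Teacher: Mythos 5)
Your proof is correct and takes essentially the same route as the paper: a Céa-type argument for the ghost-penalty-stabilized $L^2$ projection (combining \eqref{eq:gp}, \eqref{eq:gp2} and interpolation estimates) to get the $L^2(\OmT)$ bound together with the sharp seminorm bound on $\vert f_h\vert_j$, followed by an Aubin--Nitsche duality argument built on the Galerkin-type identity $(f_h-f,q_h)_\Omega = -\gamma_f j_h(f_h,q_h)$ to gain the extra factor of $h$ in the $-2$-norm. The only cosmetic difference is the discrete test function in the duality step: you use the interpolant $\mathcal{I}_R \mathcal{E}\phi$ of an extension of the dual test function, whereas the paper uses the stabilized $L^2$ projection $v_h^\ast$ of $v$ (thereby avoiding the extension and re-using the quasi-optimality estimate); both choices deliver the same $\mathcal{O}(h)$ approximation and stabilization bounds.
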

\begin{proof}
    Let $g_h \in Q_h^{k_f}$ be arbitrary. Then, from the definition \eqref{eq:fh} we directly obtain
    \begin{align}
        \Vert f \!-\! f_h \Vert_\Omega^2 + \gamma_f \vert f_h \vert_j^2 & = (f\!-\!f_h,g_h\!-\!f_h)_{\Omega} + (f\!-\!f_h,f\!-\!g_h)_{\Omega} + \gamma_f |f_h|_j^2 \nonumber
        \\[-1ex]
        & \stackrel{\eqref{eq:fh}}{=} \gamma_f j_h(f_h,g_h) + (f\!-\!f_h,f\!-\!g_h)_{\Omega} \nonumber \\
\Longrightarrow \quad   \Vert f \!-\! f_h \Vert_\Omega + \gamma_f^{\frac12} \vert f_h \vert_j & \lesssim \Vert f \!-\! g_h \Vert_\Omega + \gamma_f^{\frac12} \vert g_h \vert_j \label{eq:stabl2proj:a}.
    \end{align}
    Exploiting \eqref{eq:gp} then yields a bound on $\OmT$
    \begin{align}
       \Vert \fe \!\! & -\! f_h \Vert_{\OmT} = \nrm{\fe \!\!-\! g_h}_{\OmT} + \nrm{g_h \!-\! f_h}_{\OmT}  \lesssim %
       \nrm{\fe \!\!-\! g_h}_{\OmT} + \nrm{g_h \!-\! f_h}_{\Omega} + |g_h\!-\!f_h|_j \nonumber \\[-1ex]
       & \lesssim \nrm{\fe \!\!-\! g_h}_{\OmT} + \nrm{g_h \!-\! f}_{\Omega} + |g_h|_j + \nrm{f_h \!-\! f}_{\Omega} + |f_h|_j \stackrel{\eqref{eq:stabl2proj:a}}{\lesssim} \nrm{\fe \!\!-\! g_h}_{\OmT} + |g_h|_j\nonumber \\
       &  \lesssim h^{\ell-1} \nrm{\fe}_{H^{\ell-1}(\OmT)} \lesssim h^{\ell-1} \nrm{f}_{H^{\ell-1}(\Omega)}, 
       \nonumber
    \end{align}
    $\ell=1,...,k_f+2$, where the last step follows with $g_h = \Pi^Q \fe$, the approximation properties of $Q_h$ and $\eqref{eq:gp2}$.
Finally, we prove the $-2$-norm-bound. By definition we have 
\begin{align}
    \Vert f \!\! & -\! f_h \Vert_{-2} = \sup_{v\in H^2(\Omega)\cap H^1_0(\Omega) \setminus \{0\} } \frac{(f-f_h,v)_\Omega}{\nrm{v}_{H^2(\Omega)}}.
    \nonumber
 \end{align}
 To $v \in H^2(\Omega)$ we denote by $v_h^\ast \in Q_h^{k_f}$ the solution of $(v_h^\ast,q_h)_\Omega + \gamma_f j_h(v_h^\ast,q_h) = (v,q_h)_\Omega$, i.e. $v_h^\ast$ is the stabilized $L^2$ projection of $v$ as in \eqref{eq:fh} with $\gamma_f > 0$ and $k_f$. Then, we have with \eqref{eq:fh} that
\begin{align*}
(f\!-\!f_h,v)_\Omega\! & = (f\!-\!f_h,v\!-\!v_h^\ast)_\Omega\! + (f\!-\!f_h,v_h^\ast)_\Omega\! = (f\!-\!f_h,v\!-\!v_h^\ast)_\Omega\! + \gamma_f j_h(f_h,v_h^\ast) \\
& \lesssim (\nrm{f\!-\!f_h}_\Omega\! + \gamma_f^{\frac12} |f_h|_j) (\nrm{v\!-\!v_h^\ast}_\Omega\! + \gamma_f^{\frac12} |v_h^\ast|_j)
\lesssim h^{r} \nrm{f}_{H^{r-1}(\Omega)} \Vert v \Vert_{H^2(\Omega)}
\end{align*}
where we exploited $\nrm{v-v_h^\ast}_\Omega + \gamma_u^{\frac12} |v_h^\ast|_j \lesssim h \Vert v \Vert_{H^1(\Omega)} \lesssim h \Vert v \Vert_{H^2(\Omega)}$ with arguments as in \eqref{eq:stabl2proj:a} and $k_f \geq 0$.
Hence, the result for the $-2$-norm also holds true which concludes the proof.
\end{proof}

\begin{remark}\label{rem:fass}
  In the previous proof one can also use the bound $\nrm{v-v_h^\ast}_\Omega + \gamma_u^{\frac12} |v_h^\ast|_j \lesssim h^2 \Vert v \Vert_{H^2(\Omega)}$
  for $k_f \geq 1$ and $r>1$ in \cref{ass:fh} yielding the stronger bound: 
  \begin{equation*}
    \Vert f_h - \fe \Vert_{\OmT} + h^{-2} \Vert f_h - f \Vert_{-2} \lesssim h^{r} \Vert f \Vert_{H^{r}(\Omega)},
\end{equation*}
 however, since the $\Vert f_h - \fe \Vert_{\OmT}$ term dominates in what follows, we only consider the weaker bound in Assumption \ref{ass:fh}, but don't have to treat the case $k_f = 0$ separately.
\end{remark}

\subsection{Norms for the error analysis} \label{sec:analysis:norms}
In the analysis we will avoid applying the GP bilinear form (or the GP semi-norm $|\cdot|_j$) to functions in $H(\div;\OmT) \setminus \Sh$. To this end we introduce two norms: One mimics an $H(\div)$-type norm where the $L^2$-part is replaced by $a_h(\cdot,\cdot)$. This norm is supposed to be applied only on the discrete space $\Sh$. For $u_h \in \Sh$ we define
\begin{equation}
    \Vert u_h \Vert_{a_h}^2 := \Vert u_h\Vert_{\Omega,j}^2 + \Vert \div u_h \Vert_{\OmT}^2,\quad \text{with} \quad
    \Vert u_h\Vert_{\Omega,j}^2 = \Vert u_h \Vert_{\Omega}^2  + \gamma_u \vert u_h \vert_j^2 
\end{equation}
The second norm is equivalent to the first one on $\Sh$, but also makes sense for general (``non-discrete'') functions in $u \in H(\div;\OmT)$: \vspace*{-0.1cm}
\begin{equation}
  \Vert u \Vert_{\Sigma}^2 := \Vert u \Vert_{\Omg}^2 + \Vert \div u \Vert_{\OmT}^2 \quad \text{with} \quad \Vert u \Vert_{\Omg} := \begin{cases}
        \Vert u \Vert_{\OmT}, &\gamma_u > 0, \\ \Vert u \Vert_{\Omega}, &\gamma_u = 0.
\end{cases}
\end{equation}
We have $\Vert u \Vert_{\Sigma} \leq \Vert u \Vert_{H(\div;\OmT)}$
which turns into an identity for $\gamma_u > 0$.
That both norms are equivalent on $\Sh$, i.e.
$\Vert u_h \Vert_{a_h} \simeq \Vert u_h \Vert_\Sigma$ (and $\Vert u_h \Vert_{\Omega,j} \simeq \Vert u_h \Vert_{\Omg}$) on $\Sh$,
follows directly from \eqref{eq:gp}. %

\begin{remark}\label{rem:weaknorms}
  Often, for the analysis of the mixed Poisson problem in a geometrically fitted setting a stronger, $H^1$-type, norm on $Q_h$ and a weaker, $L^2$-type, norm on $\Sh$ is used. To achieve this partial integration on the $b_h(\cdot,\cdot)$-term is applied in the stability (and continuity) analysis. We will not do this here as after partial integration on $b_h(\cdot,\cdot)$ we would need control on $\Sh$ in $L^2(\OmT)$  which we can only provide for $\gamma_u > 0$ which we don't want to restrict the analysis to. We discuss consequence of this choice and potential improvements for the setting with stronger norms and $\gamma_u > 0$ in more detail in Remark \ref{rem:strongnorms} below.
\end{remark}

\subsection{A-priori error analysis for $u - u_h$} \label{sec:analysis:apriori1}
\noindent
We start with the stability results that the method has been tailored for in the next two lemmas.
\begin{lemma}[Kernel-coercivity]
    $a_h(\cdot,\cdot)$ is coercive on $\Shz$ w.r.t.
    $\Vert \cdot \Vert_{a_h}$ and $\Vert \cdot \Vert_{\Sigma}$.
\end{lemma}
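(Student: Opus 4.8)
The plan is to exploit that on $\Shz$ the divergence term appearing in both norms vanishes identically, so that $a_h(\cdot,\cdot)$ reduces \emph{exactly} to the squared $\Vert \cdot \Vert_{a_h}$-norm; the $\Vert \cdot \Vert_{\Sigma}$-statement then follows immediately from the norm equivalence on $\Sh$ recorded just above the lemma.

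First I would fix an arbitrary $v_h^0 \in \Shz$ and recall that by definition $\div v_h^0 = 0$. Unfolding the definition of $a_h(\cdot,\cdot)$ gives
\begin{equation*}
  a_h(v_h^0,v_h^0) = (v_h^0,v_h^0)_\Omega + \gamma_u j_h(v_h^0,v_h^0) = \Vert v_h^0 \Vert_\Omega^2 + \gamma_u \vert v_h^0 \vert_j^2 = \Vert v_h^0 \Vert_{\Omega,j}^2.
\end{equation*}
On the other hand, since $\div v_h^0 = 0$, the divergence contribution to $\Vert v_h^0 \Vert_{a_h}^2$ drops out, so that $\Vert v_h^0 \Vert_{a_h}^2 = \Vert v_h^0 \Vert_{\Omega,j}^2 + \Vert \div v_h^0 \Vert_{\OmT}^2 = \Vert v_h^0 \Vert_{\Omega,j}^2$. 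Comparing the two identities yields $a_h(v_h^0,v_h^0) = \Vert v_h^0 \Vert_{a_h}^2$, i.e.\ coercivity with respect to $\Vert \cdot \Vert_{a_h}$ with constant one.

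For coercivity with respect to $\Vert \cdot \Vert_{\Sigma}$, I would invoke the equivalence $\Vert v_h^0 \Vert_{a_h} \simeq \Vert v_h^0 \Vert_{\Sigma}$ valid on $\Sh$, which was noted to follow directly from the global GP stability \eqref{eq:gp} (on $\Shz$ it reduces to $\Vert v_h^0 \Vert_{\Omega,j} \simeq \Vert v_h^0 \Vert_{\Omg}$, the divergence parts agreeing trivially). Combining with the previous step gives $a_h(v_h^0,v_h^0) = \Vert v_h^0 \Vert_{a_h}^2 \gtrsim \Vert v_h^0 \Vert_{\Sigma}^2$. I do not expect any genuine obstacle here: the essential observation is simply that the kernel constraint $\div v_h^0 = 0$ removes the only term of $a_h(\cdot,\cdot)$ that is not already present in the two norms, and the case distinction $\gamma_u > 0$ versus $\gamma_u = 0$ is entirely absorbed into the already-established norm equivalence on $\Sh$.
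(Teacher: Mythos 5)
Your proof is correct and follows essentially the same route as the paper: on $\Shz$ the divergence term vanishes, so $a_h(u_h^0,u_h^0)$ equals $\Vert u_h^0 \Vert_{a_h}^2$ exactly, and the $\Vert \cdot \Vert_{\Sigma}$-coercivity then follows from the norm equivalence $\Vert \cdot \Vert_{a_h} \simeq \Vert \cdot \Vert_{\Sigma}$ on $\Sh$ established via \eqref{eq:gp}. Your write-up merely spells out the one-line identity the paper uses, with no substantive difference.
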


\begin{proof}
  For $u_h \in \Shz$ 
    $%
      a_h(u_h,u_h) = a_h(u_h,u_h) + \Vert \div u_h \Vert_{\OmT}^2 = \Vert u_h \Vert^2_{a_h} \simeq \Vert u_h \Vert^2_{\Sigma}. 
    $%
\end{proof}

\begin{lemma}[LBB-type inf-sup stability on $\OmT$]\label{lemma:InfSupBh}
    There holds the inf-sup condition
    \begin{equation}\label{eq:InfSupBh}
        \inf_{\bar p_h \in Q_h} \sup_{u_h \in \Sh} \frac{b_h(u_h,\bar p_h)}{\Vert u_h \Vert_\Sigma \Vert \bar p_h \Vert_{\OmT}} 
        \ge c > 0
    \end{equation}
    for a constant $c$ that is independent of $h$ and local cut configurations. Hence, the saddle-point problem \eqref{eq:new:mixedprob:a} -- \eqref{eq:new:mixedprob:b} is well-posed (in the corresponding norms).
\end{lemma}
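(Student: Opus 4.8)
The key observation is that the modified form $b_h$ lives entirely on the active mesh and is therefore blind to the cut position: on $\OmT$ the pair $(\Sh,Q_h)=(\mathbb{R}\mathbb{T}^k(\Th),\mathbb{P}^k(\Th))$ is a perfectly body-fitted Raviart--Thomas/discontinuous-pressure pair, with $\Th$ a shape-regular, quasi-uniform simplicial mesh of $\OmT$. The plan is to reduce \eqref{eq:InfSupBh} to the classical body-fitted LBB estimate on $\OmT$, whose constant depends only on shape-regularity and quasi-uniformity, and then to transfer it to the $\Vert\cdot\Vert_\Sigma$-norm.

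First I would record the norm inequality $\Vert v_h\Vert_\Sigma \le \Vert v_h\Vert_{H(\div;\OmT)}$ for all $v_h\in\Sh$ (an identity when $\gamma_u>0$), which follows directly from the definition of $\Vert\cdot\Vert_\Sigma$. Since a smaller norm in the denominator can only increase the supremum, it suffices to prove
\[
\sup_{v_h\in\Sh}\frac{b_h(v_h,\bar p_h)}{\Vert v_h\Vert_{H(\div;\OmT)}}\ \ge\ c\,\Vert\bar p_h\Vert_{\OmT}\qquad\forall\,\bar p_h\in Q_h .
\]

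To establish this I would construct a Fortin preimage. Given $\bar p_h\in Q_h\subset L^2(\OmT)$, extend it by zero to a fixed ball $B\supseteq\widetilde\Omega$ and solve $-\Delta\Phi=\chi_{\OmT}\bar p_h$ on $B$ with homogeneous Dirichlet data; then $w:=-\nabla\Phi$ restricted to $\OmT$ satisfies $\div w=\bar p_h$ in $\OmT$ and $\Vert w\Vert_{H^1(\OmT)}\lesssim\Vert\bar p_h\Vert_{\OmT}$, with hidden constant depending only on the fixed ball $B$ and hence uniform over all cut configurations. Applying the Raviart--Thomas interpolation operator $\PiRT:[H^1(\OmT)]^d\to\Sh$ and using its commuting property $\div\PiRT w=\Pi^Q\div w$ together with $\div w=\bar p_h\in Q_h$ (so that $\Pi^Q\div w=\bar p_h$), I set $v_h:=\PiRT w$ and obtain $\div v_h=\bar p_h$. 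The standard $H^1$-stability of $\PiRT$ on shape-regular meshes, cf.\ \cite{brezzi2012mixed}, then gives
\[
\Vert v_h\Vert_{H(\div;\OmT)}^2=\Vert v_h\Vert_{\OmT}^2+\Vert\bar p_h\Vert_{\OmT}^2\lesssim\Vert w\Vert_{H^1(\OmT)}^2+\Vert\bar p_h\Vert_{\OmT}^2\lesssim\Vert\bar p_h\Vert_{\OmT}^2 .
\]
With this choice $b_h(v_h,\bar p_h)=(\div v_h,\bar p_h)_{\OmT}=\Vert\bar p_h\Vert_{\OmT}^2$, which yields the displayed lower bound with $c$ the reciprocal of the product of the two stability constants. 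Well-posedness of \eqref{eq:new:mixedprob:a}--\eqref{eq:new:mixedprob:b} then follows from standard saddle-point (Brezzi) theory, combining this inf-sup bound with the preceding kernel-coercivity lemma and the continuity of $a_h$ and $b_h$ in the relevant norms.

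The main point to watch --- and the only place where robustness in the cut position could a priori enter --- is the uniformity of the continuous divergence right-inverse and of the Fortin stability constant. Both are uniform precisely because $b_h$, $\OmT$ and $\Th$ do not resolve the interface, so the whole estimate is a genuinely body-fitted one on the active mesh. I expect the only mild technical care to be in the right-inverse construction on the possibly non-convex Lipschitz polytope $\OmT$, which the fixed-ball extension circumvents.
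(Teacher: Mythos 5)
Your proof is correct and follows essentially the same route as the paper's own proof (given in its appendix): construct a continuous divergence preimage of $\chi_{\OmT}\,\bar p_h$ by solving a Poisson--Dirichlet problem on a fixed, $h$-independent superdomain, apply the Raviart--Thomas interpolant with its commuting (Fortin) property to get $\div v_h = \bar p_h$ on $\OmT$, and bound $\Vert v_h\Vert_\Sigma \le \Vert v_h\Vert_{H(\div;\OmT)} \lesssim \Vert \bar p_h\Vert_{\OmT}$ uniformly in the cut position. Your one deviation---taking the superdomain to be a ball $B\supseteq\widetilde\Omega$ rather than a generic Lipschitz domain $\Omega_e$---is in fact a small improvement: it gives full $H^2$ regularity for $\Phi$, hence $w\in[H^1(\OmT)]^d$, so that applying $\PiRT$ (which the paper defines on $[H^1]^d$) is rigorously justified, whereas the paper's proof applies $\PiRT$ to a field controlled only in $H(\div;\Omega_e)$.
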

\begin{proof}
    The proof follows standard arguments, cf. \cite[Section 7.1.2]{brezzi2012mixed}. Slight adaptations are necessary to account for the fact that the domain $\OmT$ depends on $h$.  For completeness we add a complete proof in \cref{app:lbb}.
\end{proof}
Both stability results together with the approximation of the r.h.s. yield a first quasi-best approximation estimate for the discretization error.
\begin{lemma}[Error estimate for $u_h$]\label{lemma:EEuhPerp}
    Let $(u,p) \in \Sigma \times Q$ be the solution  to \eqref{eq:weakForm:I} --
    \eqref{eq:weakForm:III} and $(u_h,\bar{p}_h) \in \Sh \times Q_h$ the
    discrete solution to \eqref{eq:new:mixedprob:I} --
    \eqref{eq:new:mixedprob:III}. Then, there holds 
    \begin{align} %
      \!\!\Vert u  \!-\! u_h \Vert_{\Omega} \!+\! \gamma_u^{\frac12} \vert u_h \vert_j \!+\! \Vert \bar{p}_h \!-\! \mathcal{E}_h^0 p \Vert_{\OmT}  &\! \!\lesssim \!\! \inf_{v_h \in \Sh} \!\!\Vert u \!-\! v_h \Vert_{\Omega} \! + \! \vert v_h \vert_j \!+\! \Vert \div v_h \!\!-\! f_h \Vert_{\OmT}\!.\!\!
      \intertext{ If $\gamma_u > 0$ we further have}
      \!\!\Vert \ue \!\!\!\!-\! u_h \Vert_{\OmT} \!+\! \gamma_u^{\frac12} \vert u_h \vert_j \!+\! \Vert \bar{p}_h \!-\! \mathcal{E}_h^0 p \Vert_{\OmT}  &\! \!\lesssim \!\! \inf_{v_h \in \Sh} \!\!\Vert \ue \!\!\!\!-\! v_h \Vert_{\OmT} \! + \! \vert v_h \vert_j \!+\! \Vert \div v_h \!\!-\! f_h \Vert_{\OmT}\!,\!\!
    \end{align}
    where $\ue$ is the
    Sobolev extension of $u$ (with $\ue = \nabla \pe = \nabla (\mathcal{E} p)$). 
\end{lemma}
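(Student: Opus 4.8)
The plan is to run the standard Brezzi saddle-point argument on the \emph{discrete} error, with the crucial twist that the correct reference field for $\bar p_h$ is not $p$ but its extension-by-zero projection $\mathcal{E}_h^0 p$. The consistency identity \eqref{eq:consistency} shows that the entire inconsistency of the scheme is absorbed into the pairing $b_h(\cdot,\mathcal{E}_h^0 p-\bar p_h)$, so that measuring the pressure-type error in $\Vert\bar p_h-\mathcal{E}_h^0 p\Vert_{\OmT}$ makes the error equations close exactly. First I would fix an arbitrary $v_h\in\Sh$ and set $e_h:=u_h-v_h\in\Sh$ and $\eta_h:=\bar p_h-\mathcal{E}_h^0 p\in Q_h$; the goal is to bound $\Vert e_h\Vert_\Sigma+\Vert\eta_h\Vert_{\OmT}$ and then pass to the stated quantities by the triangle inequality and the norm equivalences on $\Sh$ from \eqref{eq:gp}.

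Next I would derive the two error equations. For $w_h\in\Sh$, inserting \eqref{eq:new:mixedprob:a}, then writing $(w_h\cdot n,p_D)_\Gamma=a(u,w_h)+b(w_h,p)$ from \eqref{eq:weakForm:a} and using $b(w_h,p)=b_h(w_h,\mathcal{E}_h^0 p)$ from \eqref{eq:extbyzeroh}, the pressure pairings cancel and one is left with
\begin{equation*}
  a_h(e_h,w_h)+b_h(w_h,\eta_h)=(u-v_h,w_h)_\Omega-\gamma_u j_h(v_h,w_h)=:F(w_h),\quad w_h\in\Sh.
\end{equation*}
Likewise, from \eqref{eq:new:mixedprob:b} and $b_h(v_h,q_h)=(\div v_h,q_h)_{\OmT}$,
\begin{equation*}
  b_h(e_h,q_h)=-(\div v_h+f_h,q_h)_{\OmT}=:G(q_h),\quad q_h\in Q_h.
\end{equation*}
A Cauchy--Schwarz estimate on the $a_h$-inner product (so that the GP term is only ever applied to the discrete functions $v_h,w_h$) together with $\Vert\cdot\Vert_{\Omega,j}\lesssim\Vert\cdot\Vert_\Sigma$ gives $\Vert F\Vert_{\Sh'}\lesssim\Vert u-v_h\Vert_\Omega+\vert v_h\vert_j$, while $\Vert G\Vert_{Q_h'}=\Vert\div v_h+f_h\Vert_{\OmT}$ is the divergence residual of $v_h$ against $\div u_h=-f_h$, i.e. the divergence--data term of the estimate.

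The core step is to bound $e_h,\eta_h$ by $\Vert F\Vert_{\Sh'}+\Vert G\Vert_{Q_h'}$ using the two stability results. Splitting $a_h$-orthogonally as $e_h=e_h^0+e_h^\perp$ with $e_h^0\in\Shz$, $e_h^\perp\in\Shperp$: the inf-sup bound \eqref{eq:InfSupBh} (equivalently, $\Vert v_h^\perp\Vert_\Sigma\lesssim\sup_{q_h}b_h(v_h^\perp,q_h)/\Vert q_h\Vert_{\OmT}$ for $v_h^\perp\in\Shperp$) applied to $b_h(e_h^\perp,\cdot)=b_h(e_h,\cdot)=G$ gives $\Vert e_h^\perp\Vert_\Sigma\lesssim\Vert G\Vert_{Q_h'}$; testing $F$ with $w_h=e_h^0$ (so that $b_h(e_h^0,\eta_h)=0$ and $a_h(e_h^\perp,e_h^0)=0$ by the definition of $\Shperp$) and invoking kernel-coercivity yields $\Vert e_h^0\Vert_\Sigma^2\simeq a_h(e_h^0,e_h^0)=F(e_h^0)\le\Vert F\Vert_{\Sh'}\Vert e_h^0\Vert_\Sigma$, hence $\Vert e_h^0\Vert_\Sigma\lesssim\Vert F\Vert_{\Sh'}$. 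Finally, choosing via \eqref{eq:InfSupBh} a $w_h$ with $\Vert w_h\Vert_\Sigma\lesssim 1$ and $b_h(w_h,\eta_h)\gtrsim\Vert\eta_h\Vert_{\OmT}$ and reading off the first error equation controls $\Vert\eta_h\Vert_{\OmT}\lesssim\Vert F\Vert_{\Sh'}+\Vert e_h\Vert_\Sigma$. Combining gives $\Vert e_h\Vert_\Sigma+\Vert\eta_h\Vert_{\OmT}\lesssim\Vert F\Vert_{\Sh'}+\Vert G\Vert_{Q_h'}$.

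It then remains to return to the claimed quantities. By the triangle inequality and the bounds $\Vert\cdot\Vert_\Omega\le\Vert\cdot\Vert_\Sigma$ and $\gamma_u^{\frac12}\vert\cdot\vert_j\lesssim\Vert\cdot\Vert_\Sigma$ on $\Sh$ (both from \eqref{eq:gp}), one gets $\Vert u-u_h\Vert_\Omega+\gamma_u^{\frac12}\vert u_h\vert_j+\Vert\eta_h\Vert_{\OmT}\lesssim\Vert u-v_h\Vert_\Omega+\vert v_h\vert_j+\Vert e_h\Vert_\Sigma+\Vert\eta_h\Vert_{\OmT}$, and taking the infimum over $v_h\in\Sh$ gives the first estimate; for $\gamma_u>0$ one simply repeats the argument with $\Vert\cdot\Vert_{\Omg}=\Vert\cdot\Vert_{\OmT}$ throughout, bounding $\vert(u-v_h,w_h)_\Omega\vert\le\Vert\ue-v_h\Vert_{\OmT}\Vert w_h\Vert_{\OmT}$ in $F$. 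I expect the only genuinely non-routine point to be the bookkeeping around the inconsistency --- identifying $\mathcal{E}_h^0 p$ as the right reference so that $F$ and $G$ contain \emph{only} approximation residuals ($u-v_h$, the GP seminorm of $v_h$, and the divergence residual $\div v_h+f_h$) and no leftover consistency error; once \eqref{eq:consistency} and \eqref{eq:extbyzeroh} are in place, the remaining estimate is the textbook Brezzi argument adapted to the two mesh-dependent norms. Throughout, $\gamma_u$ is treated as a fixed $O(1)$ parameter, so factors $\gamma_u^{\frac12}$ may be absorbed into the hidden constants.
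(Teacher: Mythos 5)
Your proposal is correct and rests on the same pillars as the paper's proof --- the reference field $\mathcal{E}_h^0 p$ and the consistency identity \eqref{eq:consistency} (your first error equation is exactly that identity written relative to $v_h$), the inf-sup result \cref{lemma:InfSupBh}, kernel coercivity, and the norm equivalence \eqref{eq:gp} --- but you organize the argument differently. You run the textbook Brezzi quasi-optimality argument: $a_h$-orthogonal splitting $e_h = e_h^0 + e_h^\perp$, control of $e_h^\perp$ by the divergence residual via a transposed inf-sup on $\Shperp$, control of $e_h^0$ by kernel coercivity, and recovery of $\eta_h$ from the first error equation. The paper never splits the velocity error: it first bounds $\Vert \bar p_h - \mathcal{E}_h^0 p\Vert_{\OmT}$ by the velocity error (inf-sup plus \eqref{eq:consistency}), then bounds $\Vert v_h - u_h\Vert_{\Omega,j}$ --- which is exactly the $a_h$-norm, so no coercivity question arises --- by approximation terms plus $\Vert \div(v_h-u_h)\Vert_{\OmT}\,\Vert \bar p_h - \mathcal{E}_h^0 p\Vert_{\OmT}$, and closes the loop with Young's inequality and a kickback, using that $\div u_h = f_h$ pointwise turns $\div(v_h-u_h)$ into pure data. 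What the paper's route buys is that it only ever needs \eqref{eq:InfSupBh} essentially as stated; what yours buys is modularity, since it is literally the stability of the saddle-point operator applied to the error equations (and could even be shortened by quoting the well-posedness asserted in \cref{lemma:InfSupBh}).

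The one place where your write-up papers over a real step is the parenthetical ``equivalently'': the bound $\Vert v_h^\perp\Vert_\Sigma \lesssim \sup_{q_h} b_h(v_h^\perp,q_h)/\Vert q_h\Vert_{\OmT}$ for $v_h^\perp \in \Shperp$ is \emph{not} a formal restatement of \eqref{eq:InfSupBh}, because $\Shperp$ is the $a_h$-orthogonal complement of $\Shz$, not the $\Vert\cdot\Vert_\Sigma$-orthogonal one. The claim is true here, but it needs a short argument: take the inf-sup lift $w \in \Sh$ with $b_h(w,\cdot) = b_h(v_h^\perp,\cdot)$ on $Q_h$ and $\Vert w\Vert_\Sigma \lesssim \Vert b_h(v_h^\perp,\cdot)\Vert_{Q_h^\ast}$; then $v_h^\perp - w \in \Shz$, and since elements of $\Shz$ are divergence-free and $a_h(\cdot,\cdot)^{1/2} = \Vert\cdot\Vert_{\Omega,j} \simeq \Vert\cdot\Vert_{\Omg}$ on $\Sh$ by \eqref{eq:gp}, the $a_h$-orthogonal projection onto $\Shperp$ is bounded in $\Vert\cdot\Vert_\Sigma$, whence $\Vert v_h^\perp\Vert_\Sigma \lesssim \Vert w\Vert_\Sigma$. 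With this supplied, your proof is complete. (Minor remark: your residual reads $\Vert\div v_h + f_h\Vert_{\OmT}$, which is the convention consistent with \eqref{eq:new:mixedprob:b}; the lemma statement writes $\div v_h - f_h$ --- a sign discrepancy already present between the paper's statement and its own proof, so nothing to fix on your side.)
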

\begin{proof}
    \begin{subequations}
    We will first bound the $p$-error in terms of the $u$-error:
    Let $v_h \in \Sh$ be arbitrarily. From the inf-sup condition in the previous lemma we know that there is a $w_h^\ast \in \Shperp$ so that
    \begin{align}
  &    \Vert \bar{p}_h - \mathcal{E}_h^0 p \Vert_{\OmT} {\Vert w_h^\ast \Vert_{\Sigma}}  \lesssim {b_h(w_h^\ast,\bar{p}_h - \mathcal{E}_h^0 p)}\stackrel{}{=} {b_h(w_h^\ast,\bar{p}_h) - b(w_h^\ast,p)} \nonumber \\[-1ex]
 & \qquad \stackrel{\eqref{eq:consistency}}{=} { a(u,w_h^\ast) - a_h(u_h,w_h^\ast)} \lesssim { a( u - u_h,w_h^\ast) - \gamma_u j_h(u_h,w_h^\ast)} \nonumber\\
& \qquad = \! { a( u - v_h,w_h^\ast) \!+\! a( v_h - u_h,w_h^\ast) + \!\gamma_u j_h(v_h-u_h,w_h^\ast) - \!\gamma_u j_h(v_h,w_h^\ast)} \nonumber \\
 &     \Longrightarrow
      \Vert \bar{p}_h \!-\! \mathcal{E}_h^0 p \Vert_{\OmT}  \lesssim \Vert u - v_h \Vert_{\Omega} + \gamma_u^{\frac12}\vert v_h \vert_j  + \Vert v_h - u_h \Vert_{\Omega,j} \label{eq:phbarbound}
    \end{align}
    For the latter term, the discrete error, we have with $w_h \coloneqq v_h-u_h$ that there holds %
    \begin{align}
        \Vert w_h & \Vert_{\Omega,j}^2 = 
        a(v_h - u_h,w_h) + \gamma_u j_h(v_h - u_h,w_h) 
        \nonumber\\
        & = a(u - u_h,w_h) - \gamma_u j_h(u_h,w_h) + a(v_h - u,w_h) + \gamma_u j_h(v_h,w_h) \nonumber \\[-1ex]
        & \!\! \stackrel{\eqref{eq:consistency}}{=} b_h(w_h, \bar{p}_h - \mathcal{E}_h^0 p) + a(v_h - u,w_h) + \gamma_u j_h(v_h,w_h)  \nonumber \\
        \nonumber & \leq \nrm{\div w_h}_{\OmT} \Vert \bar{p}_h - \mathcal{E}_h^0 p \Vert_{\OmT} + \sfrac12 \nrm{w_h}_{\Omega,j}^2 + \sfrac12 ~ (\nrm{v_h - u}_{\Omega}^2 + \gamma_u |v_h|_j^2) \\
        \nonumber & \stackrel{}{\leq} \eta \Vert \bar{p}_h - \mathcal{E}_h^0 p \Vert_{\OmT}^2 + \eta^{-1} \nrm{\div w_h}_{\OmT}^2 + \nrm{v_h - u}_{\Omega}^2 + \gamma_u |v_h|_j^2
    \end{align}
    for any $\eta > 0$, where we used a kickback argument and Young's inequality in the last step. Now using \eqref{eq:phbarbound} and choosing $\eta>0$ sufficiently small to absorb $\nrm{w_h}_{\Omega,j}$ with another kickback argument, we obtain 
    \begin{align} 
        \norm{v_h - u_h }_{\Omega,j} \lesssim  \nrm{u - v_h}_{\Omega} + \gamma_u^{\frac12} |v_h|_j + \norm{\div (v_h - u_h) }_{\OmT}
        \label{eq:intermediate:b}
    \end{align}
    Hence, we have
    \begin{align*}
        \nrm{\ue \!-\! u_h}_{\Omega} + \gamma_u^\frac12 |u_h|_j 
        &\lesssim
        \nrm{\ue \!-\! v_h}_{\Omega} + \gamma_u^\frac12 |v_h|_j + \nrm{v_h \!-\! u_h}_{\Omega,j} +  \nrm{\div (v_h \!-\! u_h)}_{\OmT} \\[-1ex]
        &\!\!\!\stackrel{\eqref{eq:intermediate:b}}{\lesssim}
        \nrm{\ue \!-\! v_h}_{\Omega} + \gamma_u^\frac12 |v_h|_j + \nrm{\div v_h \!-\! f_h}_{\OmT},
    \end{align*}
    where we made use of $\div u_h = f_h$.
    Now, let's turn to the case $\gamma_u > 0$. We have
    \begin{align*}
      \Vert \ue - u_h \Vert_{\OmT} & \leq \Vert \ue - v_h \Vert_{\OmT} + \Vert v_h - u_h \Vert_{\OmT} \lesssim \Vert \ue - v_h \Vert_{\OmT} + \Vert v_h - u_h \Vert_{\Omega,j} \\ 
      & \stackrel{\eqref{eq:intermediate:b}}{\lesssim} \Vert \ue - v_h \Vert_{\OmT} + \gamma_u^\frac12 |v_h|_j + \Vert \div v_h - f_h \Vert_{\OmT}.
    \end{align*}
\end{subequations}
\end{proof}

Note that the last term in \cref{eq:intermediate:b} vanishes in geometrically fitted methods when restricting the approximation space for $v_h$ to those fulfilling $\div v_h = \Pi^Q \div u = \Pi^Q f$ there, cf. \cref{rem:freg}. Restricting $v_h \in \Sh$ in the infimum on the r.h.s. to comply to $\div v_h = \div u_h = f_h$ would allow us to obtain a similar result here as well. However, the approximation error term $\nrm{f-f_h}_{\OmT}$ would then enter the approximation problem, which would lead to the same overall convergence result. 

Next, we take a look at the approximation problem.

\begin{theorem}\label{thm:errorest1}
  There holds for $u\in H^{m}(\Omega)$ with integer $m \in \{0,..,k+1\}$
  \begin{subequations} \label{eq:errorest1a}
  \begin{align}
    \! \Vert u - u_h \Vert_{\Omega} + \gamma_u^{\frac12} & |u_h|_j + \Vert \bar{p}_h \!- \mathcal{E}_h^0 p \Vert_{\OmT} 
    \lesssim  h^{m} \Vert u \Vert_{\!H^{m}\!(\Omega)} \! + \nrm{\Pi^Q \fe \!-\! f_h}_{\OmT}\!\!. \label{eq:errorest1a}\\
    \intertext{For $\gamma_u > 0$ there further holds}
    \! \Vert \ue - u_h \Vert_{\OmT} + \gamma_u^{\frac12} & |u_h|_j + \Vert \bar{p}_h \!- \mathcal{E}_h^0 p \Vert_{\OmT} 
    \lesssim  h^{m} \Vert u \Vert_{\!H^{m}\!(\Omega)} \! + \nrm{\Pi^Q \fe \!-\! f_h}_{\OmT}\!\!. \label{eq:errorest1b}
  \end{align}
\end{subequations}
\end{theorem}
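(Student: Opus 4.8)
The plan is to reduce everything to the quasi-best approximation bound of \cref{lemma:EEuhPerp} and then to make one canonical choice of the test function, namely the Raviart--Thomas interpolant of the Sobolev extension, $v_h = \PiRT \ue \in \Sh$. With this choice the three contributions appearing in the infimum, $\Vert u - v_h\Vert_{\Omega}$, $\vert v_h\vert_j$ and $\Vert \div v_h - f_h\Vert_{\OmT}$, decouple and can each be estimated by a standard tool.

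First I would bound the $L^2$-term. Since $\PiRT \ue$ interpolates the extension and $\ue = u$ on $\Omega$, I use $\Vert u - \PiRT\ue\Vert_{\Omega} = \Vert \ue - \PiRT\ue\Vert_{\Omega} \le \Vert \ue - \PiRT\ue\Vert_{\OmT}$ together with the standard Raviart--Thomas approximation estimate $\Vert \ue - \PiRT\ue\Vert_{\OmT} \lesssim h^{m}\Vert \ue\Vert_{H^{m}(\OmT)}$, followed by boundedness of the extension operator $\Vert \ue\Vert_{H^{m}(\OmT)} \lesssim \Vert u\Vert_{H^{m}(\Omega)}$. For the stronger estimate \eqref{eq:errorest1b} (valid for $\gamma_u > 0$) the identical chain is used, but the full $\OmT$-norm $\Vert \ue - \PiRT\ue\Vert_{\OmT}$ is retained instead of restricting to $\Omega$.

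Next, the ghost-penalty seminorm $\vert \PiRT \ue\vert_j$ is controlled directly by the weak consistency property \eqref{eq:gp2} applied with $\mathcal{I}_R = \PiRT$, giving $\vert \PiRT\ue\vert_j \lesssim h^{m}\vert \ue\vert_{H^{m}(\OmT)} \lesssim h^{m}\Vert u\Vert_{H^{m}(\Omega)}$. The divergence residual is the essential point: by the commuting diagram property $\div \PiRT\ue = \Pi^Q \div \ue$ and the definition $\fe = -\div\ue$ we have $\div \PiRT\ue = -\Pi^Q\fe$, while the discrete mass balance \eqref{eq:new:mixedprob:b} yields $\div u_h = -f_h$. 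Hence $\div(\PiRT\ue - u_h) = f_h - \Pi^Q\fe$, so the residual reduces exactly to $\Vert \Pi^Q\fe - f_h\Vert_{\OmT}$. Collecting the three bounds in \cref{lemma:EEuhPerp} then gives \eqref{eq:errorest1a}, and repeating with the second line of that lemma and the $\OmT$-interpolation estimate gives \eqref{eq:errorest1b}.

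The one genuinely non-routine feature, already flagged in \cref{rem:freg}, is precisely this divergence residual: in a geometrically fitted method one would pick $v_h$ with $\div v_h = \div u_h$ and the term would vanish, but here $f_h \neq \Pi^Q\fe$ in general, so the mismatch $\Pi^Q\fe - f_h$ cannot be absorbed into $h^{m}\Vert u\Vert_{H^{m}(\Omega)}$ and must be carried along as a separate data-approximation term on the right-hand side. Beyond this, the only care required is the regularity bookkeeping for the extension, i.e.\ ensuring that every $\OmT$-norm of $\ue$ and its derivatives is bounded by the corresponding $\Omega$-norm of $u$ through the bounded extension operator.
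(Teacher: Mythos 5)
Your proof follows essentially the same route as the paper's: it starts from \cref{lemma:EEuhPerp}, chooses $v_h = \PiRT \ue$, bounds $\Vert u - \PiRT\ue\Vert_{\Omega}$ by the standard Raviart--Thomas estimate plus boundedness of the extension, and collapses the divergence residual to $\nrm{\Pi^Q\fe - f_h}_{\OmT}$ via the commuting property $\div\PiRT\ue = \Pi^Q\div\ue$ and the discrete mass balance. This is exactly the paper's argument in structure and substance.

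The one step you gloss over is the ghost-penalty term. You invoke the weak-consistency property \eqref{eq:gp2} directly with $\mathcal{I}_R = \PiRT$, but \cref{ass:gp} only posits \eqref{eq:gp2} for \emph{a} suitable interpolation operator into $R_h$, and the operators for which this property is verified in the cited literature are Lagrange-type interpolants, not the Raviart--Thomas interpolant. The paper therefore does not apply \eqref{eq:gp2} to $\PiRT$ at all; it inserts the Lagrange interpolant $\Pi^{\mathcal{L}}$ and estimates
\begin{equation*}
  | \PiRT \ue |_j \leq  |\Pi^{\mathcal{L}} \ue |_j + |(\Pi^{\mathcal{L}} - \id) \ue |_j + |(\PiRT - \id) \ue |_j,
\end{equation*}
bounding the first term by \eqref{eq:gp2} and the last two by the assumed $L^2$-continuity of $j_h$ together with standard interpolation estimates on $\OmT$. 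Your shortcut is repairable by exactly this triangle-inequality argument, so the gap is minor rather than fatal, but as written the appeal to \eqref{eq:gp2} for the Raviart--Thomas interpolant is not licensed by the stated assumptions.
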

\begin{proof}
  Starting from the quasi-best approximation result in \cref{lemma:EEuhPerp} we only need to consider the approximation problem. Here, we set $v_h = \PiRT \ue$ and directly obtain the following standard bound from the literature, cf. e.g. \cite{brezzi2012mixed}, 
  $$
  \Vert u - \PiRT \ue \Vert_{\Omega}  \leq \Vert \ue - \PiRT \ue \Vert_{\OmT} \lesssim h^{m} \Vert \ue
  \Vert_{H^{m}(\OmT)} 
  $$
  For the GP part we recall \eqref{eq:gp2} and we apply results from the literature, cf. e.g. \cite[Lemma 5.8]{LO_ESAIM_2019}, for the Lagrange interpolation operator $\Pi^{\mathcal{L}}$ of order $k$:
  \begin{align*}
  | \PiRT \ue |_j & \leq  |\Pi^{\mathcal{L}} \ue |_j + |(\Pi^{\mathcal{L}} - \id) \ue |_j + |(\PiRT - \id) \ue |_j  \\
  & \lesssim |\Pi^{\mathcal{L}} \ue |_j + \Vert(\Pi^{\mathcal{L}} - \id) \ue \Vert_{\OmT} + \Vert (\PiRT - \id) \ue \Vert_{\OmT} \lesssim h^{m} \Vert \ue \Vert_{H^{m}(\OmT)}.
  \end{align*}
  Exploiting $\Vert \ue \Vert_{H^{m}(\OmT)} \lesssim \Vert u
  \Vert_{H^{m}(\Omega)}$ and $\div \PiRT \ue = \Pi^Q \fe$ concludes the proof.
\end{proof}

\begin{remark} \label{rem:analysis:trimmed}
    Let us assume $f = f_h \in Q_h$ and $\gamma_u = 0$ and compare the solution $(u_h^R,p_h^R)$ of restricted mixed method \eqref{eq:mixedprob} and the solution $(u_h^M,\bar p_h^M)$ of the unfitted mixed method \eqref{eq:new:mixedprob:a} -- \eqref{eq:new:mixedprob:b}.
    As $\ker b = \ker b_h$ and $a_h(\cdot,\cdot) = a(\cdot,\cdot)$ we have that $\Shz$ in \eqref{eq:mixedprob:I} and \eqref{eq:new:mixedprob:I} coincide. Further, \eqref{eq:mixedprob:II} and \eqref{eq:new:mixedprob:II} yield the same solution as both lead to pointwise conditions, $\div u_h^\perp = f = f_h$ on $\Omega$ or $\OmT$, respectively.
    Hence, we have coinciding $u$-components $u_h^R = u_h^M$.
    Finally, we can rewrite \eqref{eq:mixedprob:III} with the help of $\bar p_h^R = \mathcal{E}_h^0 p_h^R$ to 
    $
b_h(v_h^\perp, \bar p_h^R) 
= b(v_h^\perp, p_h^R) = 
(v_h^\perp \cdot n,p_D)_{\Gamma} - a(u_h^{R},v_h^\perp),  v_h^\perp \in \Shperp
    $
    which coincides with \eqref{eq:new:mixedprob:III}
    and yields $\bar p_h^R = \bar p_h^M$.
  \end{remark}

\begin{lemma}\label{lem:dual}
    Assume that the domain $\Omega$ is smooth enough to provide an
    $L^2(\Omega)$-$H^2(\Omega)$ stability result (e.g. $\Omega$ is convex),     
    there holds
    $$
    \Vert \bar p_h -  \mathcal{E}_h^0 p \Vert_{\OmT}
    \lesssim  h (\| u-u_h\|_{\Omega} + \gamma_u^{\frac12}|u_h|_{j_h} ) + \|f - f_h\|_{-2}. %
    $$

  \end{lemma}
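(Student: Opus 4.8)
The plan is to run an Aubin--Nitsche duality argument, adapted to the unfitted active mesh, for the pressure error $e_p := \bar p_h - \mathcal{E}_h^0 p \in Q_h$. Since the targeted extra power of $h$ must come from elliptic regularity, I would introduce, for a generic $g \in L^2(\Omega)$, the dual solution $z \in H^1_0(\Omega)\cap H^2(\Omega)$ of $-\Delta z = g$ in $\Omega$ with $z=0$ on $\Gamma$; the assumed $L^2$--$H^2$ stability (convexity) gives $\|z\|_{H^2(\Omega)}\lesssim\|g\|_\Omega$. Setting $\phi:=\nabla z$, extending it to $\phi^{\mathcal{E}}=\mathcal{E}\phi\in H^1(\OmT)^d$ with $\|\phi^{\mathcal{E}}\|_{H^1(\OmT)}\lesssim\|g\|_\Omega$, and taking the Raviart--Thomas interpolant $w_h:=\PiRT\phi^{\mathcal{E}}\in\Sh$ as test function, the commuting property $\div\PiRT\phi^{\mathcal{E}}=\Pi^Q\div\phi^{\mathcal{E}}$ is what couples $(\div w_h,e_p)_{\OmT}$ back to $g$.

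Concretely, I would insert $w_h$ into the consistency identity \eqref{eq:consistency}, i.e. $(\div w_h, e_p)_{\OmT} = a(u-u_h,w_h)-\gamma_u j_h(u_h,w_h)$, and estimate the right-hand side term by term. Splitting $a(u-u_h,w_h)=(u-u_h,\phi^{\mathcal{E}})_\Omega+(u-u_h,w_h-\phi^{\mathcal{E}})_\Omega$, the super-approximation bound $\|w_h-\phi^{\mathcal{E}}\|_\Omega\lesssim h\|\phi^{\mathcal{E}}\|_{H^1(\OmT)}\lesssim h\|g\|_\Omega$ produces the contribution $h\|u-u_h\|_\Omega\|g\|_\Omega$. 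For the smooth part I integrate by parts on $\Omega$: since $z$ vanishes on $\Gamma$ and $\div(u-u_h)= f-f_h$ on $\Omega$ up to sign (using $\div u=-f$ and $\div u_h=-f_h$ on $\OmT$ from \eqref{eq:new:mixedprob:b}), this yields $(u-u_h,\nabla z)_\Omega = \pm(f-f_h,z)_\Omega\le\|f-f_h\|_{-2}\|z\|_{H^2(\Omega)}\lesssim\|f-f_h\|_{-2}\|g\|_\Omega$, which is exactly where the weak $-2$-norm enters and why \cref{ass:fh} is tailored to control it. The GP term is handled by \eqref{eq:gp2}: $\gamma_u^{1/2}|w_h|_j=\gamma_u^{1/2}|\PiRT\phi^{\mathcal{E}}|_j\lesssim h\|\phi^{\mathcal{E}}\|_{H^1(\OmT)}\lesssim h\|g\|_\Omega$ (via the Lagrange-interpolation comparison already used in the proof of \cref{thm:errorest1}), giving $h\,\gamma_u^{1/2}|u_h|_{j_h}\|g\|_\Omega$.

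On the left-hand side I use $\div\phi=-g$ on $\Omega$ and $e_p\in Q_h$ to write $(\div w_h,e_p)_{\OmT}=(\div\phi^{\mathcal{E}},e_p)_{\OmT}=-(g,e_p)_\Omega+(\div\phi^{\mathcal{E}},e_p)_{\OmT\setminus\Omega}$. Combining with the three bounds above and taking the supremum over $g$ with $\|g\|_\Omega=1$ then gives $\|e_p\|_\Omega\lesssim h(\|u-u_h\|_\Omega+\gamma_u^{1/2}|u_h|_{j_h})+\|f-f_h\|_{-2}+\|e_p\|_{\OmT\setminus\Omega}$, that is, the asserted estimate up to the cut-element remainder $\|e_p\|_{\OmT\setminus\Omega}$.

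The hard part will be precisely this remainder on $\OmT\setminus\Omega$: the dual problem and its $H^2$-regularity are anchored on the physical domain $\Omega$, whereas $b_h(\cdot,\cdot)$ and hence $e_p$ live on the larger active mesh, and on badly cut elements $\|e_p\|_{\OmT\setminus\Omega}$ cannot be controlled by $\|e_p\|_\Omega$ with a cut-robust constant (this is exactly what GP or agglomeration would fix, but $\bar p_h$ carries no stabilization). To close the argument I would exploit the robust control the method provides on the whole active mesh, namely $\div u_h=-f_h$ on $\OmT$ together with the robust inf-sup of \cref{lemma:InfSupBh}, and the $\mathcal{E}_h^0$-structure made explicit in \cref{rem:analysis:trimmed}: there $\bar p_h=\mathcal{E}_h^0 p_h^R$, so $e_p=\mathcal{E}_h^0(p_h^R-p)$ and $\|e_p\|_{\OmT}\le\|p_h^R-p\|_\Omega$, reducing the entire cut-element contribution to a body-fitted duality estimate on $\Omega$. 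Making this reduction rigorous in the general case ($f\neq f_h$, $\gamma_u>0$) through the consistency identity, rather than only in the special configuration of \cref{rem:analysis:trimmed}, is the technical crux; once $\|e_p\|_{\OmT\setminus\Omega}$ is absorbed the stated bound for $\|\bar p_h-\mathcal{E}_h^0 p\|_{\OmT}$ follows.
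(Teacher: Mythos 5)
There is a genuine gap, and it sits exactly where you located it: the exterior remainder $(\div\phi^{\mathcal{E}},e_p)_{\OmT\setminus\Omega}$. This term carries no factor of $h$ and is of size $\Vert g\Vert_\Omega\,\Vert e_p\Vert_{\OmT\setminus\Omega}$, so it can neither be absorbed by a kickback into $\Vert e_p\Vert_\Omega$ nor dropped; worse, the lemma claims control of $\Vert e_p\Vert_{\OmT}$, so the exterior part must appear on the \emph{left} of the estimate, not as an error on the right. Your proposed rescue via \cref{rem:analysis:trimmed} does not close it: that remark is proved only under the assumptions $f=f_h\in Q_h$ and $\gamma_u=0$, and even there the identification $\bar p_h=\mathcal{E}_h^0 p_h^R$ would reduce the problem to a duality estimate for the \emph{restricted} mixed method, whose stability constants are precisely the ones that degenerate with the cut position (cf. the discussion around \eqref{eq:rmfem:infsup}); so the reduction cannot produce a cut-robust bound in the general case, as you yourself concede.

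The missing idea is that the extension of the dual velocity must be \emph{divergence-matched}, not generic. In the paper's proof the dual problem is posed on $\Omega$ with data $-(\bar p_h-\mathcal{E}_h^0 p)$, giving $(w,z)$ with $\Vert w\Vert_{H^1(\Omega)}\lesssim\Vert z\Vert_{H^2(\Omega)}\lesssim\Vert \bar p_h-\mathcal{E}_h^0 p\Vert_\Omega$; but instead of applying a Sobolev extension operator to $w$ (which is what loses control of the divergence outside $\Omega$), one solves an auxiliary Stokes problem on an $h$-independent strip $\Omega_e$ with \emph{prescribed} divergence $\div w_e=\chi_{\OmT}(\bar p_h-\mathcal{E}_h^0 p)$ and boundary data matching $w$ on $\Gamma$, obtaining $\Vert w_e\Vert_{H^1(\Omega_e)}\lesssim\Vert\bar p_h-\mathcal{E}_h^0 p\Vert_{\OmT}$ with cut-independent constants. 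For the glued field $\tilde w$ the Fortin property of $\PiRT$ then yields $\div w_h=\bar p_h-\mathcal{E}_h^0 p$ \emph{identically on all of} $\OmT$ (the projection $\Pi^Q$ acts as the identity on $e_p\in Q_h$), so testing the error equation with $w_h$ produces $\Vert\bar p_h-\mathcal{E}_h^0 p\Vert_{\OmT}^2$ exactly, with no exterior remainder at all. Once this test function is in hand, the rest of your estimates are sound and essentially coincide with the paper's: the $h$-weighted terms come from $\Vert \tilde w-w_h\Vert_{\OmT}+\gamma_u^{1/2}|w_h|_j\lesssim h\Vert\bar p_h-\mathcal{E}_h^0 p\Vert_{\OmT}$, the GP term is handled by weak consistency as in \eqref{eq:gp2}, and the $\Vert f-f_h\Vert_{-2}$ term enters through $(\div(u-u_h),z)_\Omega$ using $z|_\Gamma=0$ and $\Vert z\Vert_{H^2(\Omega)}\lesssim\Vert\bar p_h-\mathcal{E}_h^0 p\Vert_{\OmT}$, after which one divides by $\Vert\bar p_h-\mathcal{E}_h^0 p\Vert_{\OmT}$.
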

  \begin{proof}
  \begin{subequations}
  The structure of the proof follows the standard strategy of exploiting
    approximation and regularity of the dual problem. The dual problem ist considered on
    $\Omega$, followed by an extension of the solution to a larger domain. 
  
    Let us start with formulating the dual problem which is \eqref{eq:weakForm:a}--\eqref{eq:weakForm:b} with data $p_D = 0$ and $f= -(\bar p_h - \mathcal{E}_h^0 p)$ and we denote the solution as 
    $ (w,z)\in\Sigma \times Q $. 
    \begin{align}
        (w,w')_{\Omega} + (\div w', z)_{\Omega}
        &= 0
        && \forall w' \in \Sigma\tag{D1} \label{eq:ED1}\\
        (\div w, z')_{\Omega}
        &= (  (\bar p_h - \mathcal{E}_h^0 p),z')_{\Omega}
        && \forall z' \in Q  \tag{D2} \label{eq:ED2}.
      \end{align}
    There holds
    $
    \|w\|_{H^1(\Omega)} \lesssim \|z\|_{H^2(\Omega)} \lesssim \| (\bar p_h - \mathcal{E}_h^0 p)\|_{\Omega}
    $
    due to the $L^2$-$H^2$ regularity of $\Omega$. 
    Further choosing $w' = u-u_h$ in \eqref{eq:ED1} we have 
    \begin{equation} \label{eq:d1b}
        (w, u-u_h)_{\Omega} = - (\div (u-u_h), z)_{\Omega}.
    \end{equation}
        We will need a proper extension of $w$ which we construct next.
    Let $\Omega_e$ be an $h$-independent extension strip domain to $\Omega$ such that $\OmT \subset \Omega_e \cup \Omega$ and 
    $w_e, q_e\in H^1(\Omega_e)\times L^2(\Omega_e)$ be the solution of the Stokes problem on $\Omega_e$:
    \begin{align*}
        -\Delta w_e + \nabla q_e 
        &= 0,
        & 
        \div w_e
        &=  \chi_{\OmT} (\bar p_h - \mathcal{E}_h^0 p)
        && \text{ on } \Omega_e,\\
        w_e|_{\Gamma} &= w|_{\Gamma}, \text{ on } \Gamma
        & 
        w_e|_{\Gamma_e \setminus \Gamma } &= \bn \frac{\int_{\Omega_e\setminus \Omega } \chi_{\OmT} (\bar p_h - \mathcal{E}_h^0 p) }{|\Gamma_e \setminus \Gamma|}
        && \text{ on } \Gamma_e \setminus \Gamma.
    \end{align*}
    With constants that are independent of $\OmT$ we have 
    \[
        \|w_e\|_{H^1(\Omega_e)}\lesssim \|w\|_{H^1(\Omega)} + \|(\bar p_h - \mathcal{E}_h^0 p)\|_{\OmT} \lesssim \|(\bar p_h - \mathcal{E}_h^0 p)\|_{\OmT},   
    \]
    We now consider the extension of $w$ by $w_e$: 
    $\tilde w = \chi_\Omega w + \chi_{\Omega_e} w_e \in H^1(\Omega \cap \Omega_2)$
    and define 
    $w_h = \Pi^{\mathbb{R}\mathbb{T}} \tilde w|_{\OmT} \in \Sh$, for which we have the standard estimate
    \begin{equation}\label{eq:estw}
        \Vert w - w_h\Vert_{\Omega} + \gamma_u^{\frac12} \vert w_h \vert_{j_h}\leq
        \Vert \tilde w - w_h\Vert_{\OmT} \lesssim 
    h         \| \tilde w\|_{H^1(\OmT)} 
    \lesssim h \|(\bar p_h - \mathcal{E}_h^0 p)\|_{\OmT}
    \end{equation}
    and the Fortin property $\div w_h = \chi_{\OmT} (\bar p_h - \mathcal{E}_h^0 p) $.

    Subtracting \eqref{eq:new:mixedprob:a} with $v_h = w_h$ from \eqref{eq:weakForm:a} with $v = v_h = w_h \in \Sh$ and rewriting $b(w_h,p) = (\div w_h,\mathcal{E}_h^0 p)_{\OmT}$ yields
    \begin{align*}
         (u - u_h, w_h)_\Omega - \gamma_u j_h(u_h,w_h)
      & = (\div w_h, \bar p_h - \mathcal{E}_h^0 p)_{\OmT} = \Vert \bar p_h -
      \mathcal{E}_h^0 p \Vert_{\OmT}^2
    \end{align*}
  Subtracting \eqref{eq:d1b}, and exploiting $w=\tilde w|_{\Omega}$, we get
  \begin{equation}
    \Vert \bar p_h -
    \mathcal{E}_h^0 p \Vert_{\OmT}^2 = 
      (u - u_h, w_h- w)_\Omega - \gamma_u j_h(u_h,w_h) + ( \div (u-u_h), z)_\Omega.  \label{eq:dual:helpB}
    \end{equation}
    Therefore  with $\div (u-u_h) = f_h - f$ we have
    \begin{align*}
      \Vert \bar p_h -
      \mathcal{E}_h^0 p \Vert_{\OmT}^2
      \leq &
      (\| u-u_h\|_{\Omega} + \gamma_u^{\frac12}|u_h|_{j_h}) 
   (\| w-w_h\|_{\Omega} + \gamma_u^{\frac12}|w_h|_{j_h}) \\ 
   & + \| z\|_{H^2(\Omega)} \|f - f_h\|_{-2}. %
    \end{align*}
    Combining this with \eqref{eq:estw} we get
    \begin{align*}
      \Vert \bar p_h -  \mathcal{E}_h^0 p \Vert_{\OmT}^2
      & \lesssim (h (\| u-u_h\|_{\Omega} + \gamma_u^{\frac12}|u_h|_{j_h} ) + \|f - f_h\|_{-2}) \cdot \Vert \bar p_h -  \mathcal{E}_h^0 p \Vert_{\OmT}
    \end{align*}
    Dividing by $\Vert \bar p_h -  \mathcal{E}_h^0 p \Vert_{\OmT}$ yields the claim.
  \end{subequations}
  \end{proof}

  \begin{corollary}
    Let $(u,p) \in H^{\ell+1}(\Omega) \times H^{\ell}(\Omega)$ be the solution to \eqref{eq:weakForm:a}--\eqref{eq:weakForm:b} with $f \in H^{\ell+1}(\Omega)$, $0\leq \ell \leq k$, $\Omega$ be so that $L^2$-$H^2$-regularity holds and let \cref{ass:fh} hold for $k_f \geq \ell$ and $r = \ell+1$. Then there holds
    \begin{subequations}
    \begin{align}
        \| \ue -u_h\|_{\Sigma} + \gamma_u \vert u_h \vert_j & \lesssim h^{\ell+1} \Vert u \Vert_{H^{\ell+1}(\Omega)} + h^{\ell+1} \Vert f \Vert_{H^{\ell+1}(\Omega)}, \\
      \Vert \bar p_h -  \mathcal{E}_h^0 p \Vert_{\OmT} 
      & \lesssim h^{\ell+2} \Vert u \Vert_{H^{\ell+1}(\Omega)} + h^{\ell+2} \Vert f \Vert_{H^{\ell+1}(\Omega)} \label{eq:errorestp}.
    \end{align}
\end{subequations}
\end{corollary}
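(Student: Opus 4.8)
The plan is to combine the quasi-best approximation estimate of \cref{thm:errorest1}, which already yields the optimal order $h^{\ell+1}$ for the flux and the $L^2(\OmT)$-error of the pressure, with the duality improvement of \cref{lem:dual}, which upgrades the pressure error by one power of $h$. The only genuinely new work is to control the two data-dependent remainders $\nrm{\Pi^Q \fe - f_h}_{\OmT}$ and $\nrm{f - f_h}_{-2}$ through \cref{ass:fh} with $r = \ell+1$ (admissible since $k_f \ge \ell$). I also note that, $\gamma_u$ being a fixed parameter, $\gamma_u \vert u_h \vert_j \lesssim \gamma_u^{\frac{1}{2}} \vert u_h \vert_j$, so the left-hand side with the first power of $\gamma_u$ follows from the half-power bounds of \cref{thm:errorest1}.

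For the flux, I would apply \cref{thm:errorest1} with $m = \ell+1$, which is admissible because $u \in H^{\ell+1}(\Omega)$ and $\ell+1 \in \{1,\dots,k+1\}$. It then remains to estimate the consistency remainder by splitting $\nrm{\Pi^Q \fe - f_h}_{\OmT} \le \nrm{(\id - \Pi^Q)\fe}_{\OmT} + \nrm{\fe - f_h}_{\OmT}$: the first summand is a plain $L^2(\OmT)$-projection error, of order $h^{\ell+1} \nrm{\fe}_{H^{\ell+1}(\OmT)}$ since $\ell+1 \le k+1$, and the second is bounded by $h^{\ell+1} \nrm{f}_{H^{\ell+1}(\Omega)}$ directly from \eqref{eq:ass:fh}. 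Using boundedness of the extension ($\nrm{\fe}_{H^{\ell+1}(\OmT)} \lesssim \nrm{f}_{H^{\ell+1}(\Omega)}$) this gives $\nrm{\Pi^Q \fe - f_h}_{\OmT} \lesssim h^{\ell+1} \nrm{f}_{H^{\ell+1}(\Omega)}$. Feeding this into both inequalities of \cref{thm:errorest1} controls $\nrm{u - u_h}_{\Omg} + \gamma_u^{\frac{1}{2}} \vert u_h \vert_j$ (i.e. the $\Omega$-norm for $\gamma_u = 0$ and the $\OmT$-norm for $\gamma_u > 0$) at order $h^{\ell+1}$. To assemble the full $\Sigma$-norm I add the divergence contribution: from $\fe = -\div \ue$ and the discrete mass balance \eqref{eq:new:mixedprob:b} giving $\div u_h = -f_h$ in $\OmT$, we get $\nrm{\div(\ue - u_h)}_{\OmT} = \nrm{\fe - f_h}_{\OmT} \lesssim h^{\ell+1} \nrm{f}_{H^{\ell+1}(\Omega)}$ by \eqref{eq:ass:fh}. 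Combining the $\Omg$- and divergence-parts yields the first asserted estimate.

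For the pressure, I would insert the flux bound just obtained into \cref{lem:dual}, whose hypotheses are met because $\Omega$ enjoys $L^2$-$H^2$-regularity. This gives $\nrm{\bar p_h - \mathcal{E}_h^0 p}_{\OmT} \lesssim h \, (\nrm{u - u_h}_{\Omega} + \gamma_u^{\frac{1}{2}} \vert u_h \vert_j) + \nrm{f - f_h}_{-2}$. The first term is of order $h \cdot h^{\ell+1} = h^{\ell+2}$ by the flux estimate, while the second is controlled by the negative-norm part of \eqref{eq:ass:fh}, namely $\nrm{f - f_h}_{-2} \lesssim h \cdot h^{\ell+1} \nrm{f}_{H^{\ell+1}(\Omega)} = h^{\ell+2} \nrm{f}_{H^{\ell+1}(\Omega)}$, again with $r = \ell+1$. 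This produces the superconvergent order $h^{\ell+2}$.

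The standard ingredients (the $\PiRT$- and $L^2$-interpolation estimates, the coercivity and inf-sup stability of the earlier lemmas) are routine; the step requiring care is the regularity bookkeeping. The crucial point is that the extra order of source regularity, $f \in H^{\ell+1}(\Omega)$ rather than the $H^{\ell}(\Omega)$ that $u \in H^{\ell+1}(\Omega)$ alone would produce for $\div u$, serves two purposes at once through \cref{ass:fh}: the $L^2(\OmT)$-bound $\nrm{\fe - f_h}_{\OmT} \lesssim h^{\ell+1}\nrm{f}_{H^{\ell+1}(\Omega)}$ pins the flux and the divergence error at order $\ell+1$, while the weaker $\nrm{\cdot}_{-2}$-bound gains the additional power of $h$ needed for the pressure. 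One must only make sure the consistent extension is chosen regular enough that $\fe \in H^{\ell+1}(\OmT)$ with $\nrm{\fe}_{H^{\ell+1}(\OmT)} \lesssim \nrm{f}_{H^{\ell+1}(\Omega)}$, so that the projection estimate above indeed has order $\ell+1$.
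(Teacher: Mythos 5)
Your proposal is correct and follows exactly the route the paper intends for this corollary (which it states without a written proof): apply \cref{thm:errorest1} with $m=\ell+1$, bound the remainder $\nrm{\Pi^Q \fe - f_h}_{\OmT}$ via the triangle inequality, the $L^2$-projection error and \cref{ass:fh} with $r=\ell+1$, recover the divergence part of the $\Sigma$-norm from $\div(\ue-u_h)=f_h-\fe$, and then feed the resulting flux bound together with the $\nrm{\cdot}_{-2}$-part of \cref{ass:fh} into \cref{lem:dual} to gain the extra power of $h$ for the pressure. Your side remarks (absorbing $\gamma_u$ versus $\gamma_u^{1/2}$, and the required boundedness $\nrm{\fe}_{H^{\ell+1}(\OmT)} \lesssim \nrm{f}_{H^{\ell+1}(\Omega)}$ of the consistent extension) are precisely the bookkeeping the paper leaves implicit.
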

We note that with $\Vert p - \bar p_h \Vert_{\Omi} \leq \Vert \Pi^Q p - \bar p_h \Vert_{\Omi} +\Vert p - \Pi^Q p \Vert_{\Omi} $ \eqref{eq:errorestp} only implies the $ \mathcal{O}(h^{\ell+2})$-bound for the difference to the $L^2$ projection of $p$ onto $Q_h|_\Omi$ while $\Vert p - \Pi^Q p \Vert_{\Omi}$ is 
bounded by $h^{\ell+1} \Vert p \Vert_{H^{\ell+1}(\Omega)}$ for $\ell \leq k$. To benefit from the higher order bound for $\Vert \bar p_h -  \mathcal{E}_h^0 p \Vert_{\OmT}$ we apply post-processings as will be discussed in the next section.

\begin{remark}\label{rem:strongnorms}
  In \cref{rem:weaknorms} we commented on an alternative choice of norms for the error estimates that is often considered in the geometrically fitted case.
  As a consequence of the choice of norms chosen here, we obtain results for the error in $\bar p_h$ in a weaker norm than usual possible (in the geometrically fitted setting). However, we consider $\bar p_h$ as an intermediate variable only and regard the post-processed pressure $p_h^\ast$, introduced in the next section, \cref{sec:post-process}, as the more important approximation to $p$ which does not suffer from this potential suboptimality. More interesting is the fact that the dependency on the approximation error of $f$ could benefit from a change of norms. %
  In the numerical examples in \cref{sec:numex:f} we investigate the dependency of the solution accuracy on the approximation of $f$ and observe a dependency on the approximation error of $f$ that seems to be weaker than the one predicted by the previous error analysis. Whether the dependency on the approximation error of $f$ in the error analysis can be improved by a change of norms is left for future work.
\end{remark}

  \section{Post-processings} \label{sec:post-process}
  \noindent A common approach to achieve higher order convergence in mixed finite element methods are post-processing schemes, e.g. as in \cite{Stenberg91}, which exploit the accuracy of $u_h$ by making use of $\nabla p = u$.
  For the unfitted mixed scheme \eqref{eq:new:mixedprob:a}--\eqref{eq:new:mixedprob:b}, these schemes have the additional potential of repairing the inconsistency of $\bar p_h$ on cut elements. We propose two of such schemes.
  
  \subsection{Element-local post-processing} \label{sec:elpost-process}
  \noindent First, we introduce the following element-local post-processing scheme: 
  For each $T \in \Th$, find $p_h^* \in \mathcal{P}^{k+1}(T)$ such that 
  \begin{subequations} \label{eq:PP1}
      \begin{align}
          (\nabla p_h^*, \nabla q_h^*)_T &= (u_h,\nabla q_h^*)_T && \forall q_h^* \in \mathcal{P}^{k+1}(T)%
, \label{eq:PP1a} \tag{$T$-PP-a}\\
          (p_h^*,1)_T &= (\bar p_h,1)_T && \text{ if } T \in \Thi, \label{eq:PP1b} \tag{$T$-PP-b}\\
          (p_h^*,1)_{T \cap \Gamma} &= (p_D,1)_{T \cap \Gamma} &&\text{ if } T \in \ThG. \label{eq:PP1c}
          \tag{$T$-PP-c} \end{align}
  \end{subequations}
  This yields a post-processed field $p_h^* \in \mathbb{P}^{k+1}(\Th)$. 
  Here, on each element $T$ the equation $\nabla p = u$ is used to reconstruct $p_h^*$. This requires $u_h$ to be accurate on $T \in \Th$ (not only on $T\cap\Omega$) and hence $\gamma_u > 0$ in \eqref{eq:new:mixedprob:a}. Obviously \eqref{eq:PP1a} only determines $p_h^\ast$ only up to a constant. On uncut elements the quality of $\bar p_h$ can be exploited to fix that constant, leading to \eqref{eq:PP1b} while on cut elements we make use of the knowledge of the Dirichlet data leading to \eqref{eq:PP1b}. 
  In the analysis below, we will show that we achieve higher order convergence for $p_h^*$ with this scheme.

  Despite its simplicity this post-processing has two disadvantages: First, it relies on $u_h$ to be accurate on $\OmT$ instead of only $\Omega$ which requires $\gamma_u > 0$ and second it relies on Dirichlet data. In the next section we discuss an alternative that does not have these disadvantages.

  \subsection{Patchwise post-processing} \label{sec:patchpost-process}
  \noindent 
  Now, we consider another post-processing scheme that operates on suitable patches instead of on single elements.
 
  Again, we use $u_h$ to reconstruct $p_h^*$. However, this time we formulate corresponding patch-problems w.r.t. patches as in \cref{sec:patches} and add a proper GP stabilization on each patch that is cut by the boundary. To fix the constant on each patch, we demand the mean value on the uncut elements to match with the mean value of $\bar{p}_h$ on the uncut elements, which is known to be accurate.
  
  On each patch, we find $p_h^* \in \mathbb{P}^{k+1}(\mathcal{T}_\omega)$ so that 
  \begin{subequations}
  \begin{align}
      (\nabla p_h^*, \nabla q_h^*)_{\Omega \cap \omega} + j_h^{\omega}(p_h^*,q_h^*) &= (u_h,\nabla q_h^*)_{\Omega \cap \omega} \quad \forall q_h^* \in \mathbb{P}^{k+1}(\mathcal{T}_\omega)%
,\label{eq:PP2a} \tag{$\omega$-PP-a}\\
      (p_h^*,1)_{\omega \cap \Omi} &= (\bar p_h,1)_{\omega \cap \Omi}. \label{eq:PP2b} \tag{$\omega$-PP-b}
  \end{align}
  \end{subequations}
  Here, we set
  $j_h^\omega(u_h,v_h) = \sum_{F \in \mathcal{F}_h^\omega} h^{-2} j_F(u_h,v_h)$ as
  the patch-wise GP bilinear form 
  with $j_F(u_h,v_h)$ a GP stabilization bilinear form as considered in \cref{sec:gp}, however for polynomials up to degree $k+1$ instead of only $k$.
  Note that we introduced a scaling with $h^{-2}$ here due to the fact that we need to stabilize an
  $H^1$-type operator. We define $|q_h|_{j,\omega}^2 := j_h^\omega(q_h,q_h)$, $q_h \in \mathbb{P}^{k+1}(\mathcal{T}_h)$.

  On the trivial patches $\mathcal{T}_\omega = \{T\}$ in the interior \eqref{eq:PP2a} and \eqref{eq:PP2b} coincide with \eqref{eq:PP1a} and \eqref{eq:PP1b} as in the geometrically fitted case in the literature.

  \begin{remark}[Variants]
    Alternatively to the constraint \eqref{eq:PP2b} on every patch one could also impose 
    $(p_h^*,1)_{\omega \cap \Omega} = (\bar p_h,1)_{\omega}$ as $(\bar p_h,1)_{\omega} = ( \mathcal{E}_h^0 p,1)_{\omega} + \mathcal{O}(h^{k+2}) = ( p,1)_{\omega \cap \Omega} + \mathcal{O}(h^{k+2})$. Furthermore, instead of $\mathbb{P}^{k+1}(\mathcal{T}_{\omega})$ one polynomial space $\mathcal{P}^{k+1}(\omega)$ per patch and a local formulation without ghost penalties could be used without a loss in the convergence order.
  \end{remark}
  
  \subsection{Analysis of the post-processings} \label{sec:analysis:post-proc}
  \noindent In this section, we will analyze the post-processing schemes that were introduced in \cref{sec:elpost-process,sec:patchpost-process}. We will begin by examining the patchwise scheme, which is a bit more involved to analyse as some aspects of the analysis of the elementwise scheme can be inferred from this case. 

  \subsubsection{Patchwise post-processing}
  We start with a bound for $p^{\mathcal{E}}-p_h^*$ in the $H^1$-semi-norm.
  \begin{lemma}\label{lem:PatchPPH1Est}
      Let $p_h^\ast$ be the solution to \eqref{eq:PP2a}--\eqref{eq:PP2b} and $p \in H^{k+2}(\dom)$ be the solution to \eqref{eq:weakForm:a}--\eqref{eq:weakForm:b}. There holds 
      \begin{equation*}
          \Vert \nabla (p^{\mathcal{E}}-p_h^*) \Vert_{\OmT} + |p_h^\ast|_{j,\omega} \lesssim h^{k+1} \Vert p  \Vert_{H^{k+2}(\dom)} + \Vert u - u_h \Vert_{\Omega}.
      \end{equation*}
  \end{lemma}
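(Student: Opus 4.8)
The plan is to run a patchwise energy (Céa-type) argument in the $H^1$-seminorm, in which the ghost-penalty term plays the role it usually plays for the $L^2$-norm in \eqref{eq:gp}: it transports control of $\nabla p_h^\ast$ from the (possibly tiny) physical part $\Omega\cap\omega$ to the whole patch $\omega$. The central ingredient I would establish first is a \emph{discrete $H^1$-coercivity on patches}: for every $v_h \in \mathbb{P}^{k+1}(\mathcal{T}_\omega)$,
\[
  \Vert \nabla v_h \Vert_{\omega}^2 \lesssim \Vert \nabla v_h \Vert_{\Omega\cap\omega}^2 + |v_h|_{j,\omega}^2,
\]
uniformly in $h$ and the cut position. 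Granting this, the form $\tilde a_\omega(v,w) := (\nabla v,\nabla w)_{\Omega\cap\omega} + j_h^\omega(v,w)$ bounds $\Vert\nabla v_h\Vert_\omega^2 + |v_h|_{j,\omega}^2$ from above, which is precisely the quantity we must control.

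For the coercivity I would argue as in the ghost-penalty stability theory. By \eqref{eq:gp0}, summed over the facets $\mathcal{F}_h^\omega$ and using the bounded-distance-to-root property of \cref{ass:patches}, the $h^{-2}$-scaled seminorm controls the distance to a single patchwise polynomial,
\[
  |v_h|_{j,\omega}^2 \gtrsim h^{-2} \inf_{\tilde v \in \mathcal{P}^{k+1}(\omega)} \Vert v_h - \tilde v \Vert_\omega^2 .
\]
Writing $v_h = (v_h-\tilde v) + \tilde v$ for the minimizer $\tilde v$, an elementwise inverse inequality gives $\Vert \nabla(v_h-\tilde v)\Vert_\omega \lesssim h^{-1}\Vert v_h - \tilde v\Vert_\omega \lesssim |v_h|_{j,\omega}$; for the single polynomial $\tilde v$ the seminorms $\Vert\nabla\tilde v\Vert_\omega$ and $\Vert\nabla\tilde v\Vert_{T_\omega}$ are equivalent on $\mathcal{P}^{k+1}(\omega)/\mathbb{R}$ with a constant depending only on the uniformly bounded patch geometry, and since the root element satisfies $T_\omega\subset\Omega\cap\omega$ this yields $\Vert\nabla\tilde v\Vert_\omega\lesssim\Vert\nabla\tilde v\Vert_{\Omega\cap\omega}\lesssim\Vert\nabla v_h\Vert_{\Omega\cap\omega}+|v_h|_{j,\omega}$. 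Combining the two bounds proves the claimed coercivity. I expect this norm-transport step to be the main obstacle, since it is where the interplay of the cut geometry, the $h^{-2}$ scaling, and the patch structure must be made rigorous.

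With coercivity in hand I would split $\pe - p_h^\ast = (\pe - \Pi^{\mathcal{L}}\pe) + e_h$ with $e_h := \Pi^{\mathcal{L}}\pe - p_h^\ast \in \mathbb{P}^{k+1}(\mathcal{T}_\omega)$ and a degree-$(k+1)$ interpolant $\Pi^{\mathcal{L}}$. Testing \eqref{eq:PP2a} with $q_h^\ast = e_h$ and inserting $\nabla\pe = \ue = u$ on $\Omega\cap\omega$ gives
\[
  \tilde a_\omega(e_h,e_h) = (\nabla(\Pi^{\mathcal{L}}\pe - \pe),\nabla e_h)_{\Omega\cap\omega} + (u - u_h,\nabla e_h)_{\Omega\cap\omega} + j_h^\omega(\Pi^{\mathcal{L}}\pe,e_h).
\]
Cauchy--Schwarz bounds the three terms by $(\Vert\nabla(\Pi^{\mathcal{L}}\pe - \pe)\Vert_{\omega} + \Vert u - u_h\Vert_{\Omega\cap\omega})\Vert\nabla e_h\Vert_\omega + |\Pi^{\mathcal{L}}\pe|_{j,\omega}\,|e_h|_{j,\omega}$; applying the coercivity on the left and Young's inequality to absorb $\Vert\nabla e_h\Vert_\omega$ and $|e_h|_{j,\omega}$ yields
\[
  \Vert\nabla e_h\Vert_\omega + |e_h|_{j,\omega} \lesssim \Vert\nabla(\Pi^{\mathcal{L}}\pe-\pe)\Vert_\omega + |\Pi^{\mathcal{L}}\pe|_{j,\omega} + \Vert u-u_h\Vert_{\Omega\cap\omega}.
\]
Note that the mean-value constraint \eqref{eq:PP2b} never enters, since constants lie in the kernel of both $\nabla$ and $j_h^\omega$, so all terms above are invariant under constants.

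It then remains to estimate the interpolation terms and to sum. Standard approximation gives $\Vert\nabla(\Pi^{\mathcal{L}}\pe - \pe)\Vert_\omega \lesssim h^{k+1}\vert\pe\vert_{H^{k+2}(\omega)}$, while the weak consistency \eqref{eq:gp2} for the degree-$(k+1)$ ghost penalty gives $\sum_{F\in\mathcal{F}_h^\omega} j_F(\Pi^{\mathcal{L}}\pe,\Pi^{\mathcal{L}}\pe) \lesssim h^{2(k+2)}\vert\pe\vert^2_{H^{k+2}}$, so that with the $h^{-2}$ scaling $|\Pi^{\mathcal{L}}\pe|_{j,\omega} \lesssim h^{-1}h^{k+2}\vert\pe\vert_{H^{k+2}} = h^{k+1}\vert\pe\vert_{H^{k+2}}$. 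A triangle inequality bounds $\Vert\nabla(\pe - p_h^\ast)\Vert_\omega + |p_h^\ast|_{j,\omega}$ by the same right-hand side. Finally, squaring and summing over the disjoint patches $\mathcal{C}_h$, using $\sum_\omega \Vert u - u_h\Vert_{\Omega\cap\omega}^2 = \Vert u - u_h\Vert_\Omega^2$ and the extension stability $\Vert\pe\Vert_{H^{k+2}(\OmT)}\lesssim\Vert p\Vert_{H^{k+2}(\Omega)}$, delivers the asserted global estimate.
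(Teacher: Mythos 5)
Your proof is correct, and its skeleton is the same as the paper's: both arguments rest on the patchwise transport inequality $\Vert \nabla v_h \Vert_{\omega}^2 \lesssim \Vert \nabla v_h \Vert_{\Omega\cap\omega}^2 + |v_h|_{j,\omega}^2$, followed by the same energy/Galerkin argument testing \eqref{eq:PP2a} with the discrete error and inserting $u = \nabla p$ on $\Omega\cap\omega$, then interpolation plus the weak consistency \eqref{eq:gp2} (with the $h^{-2}$ scaling correctly yielding $h^{k+1}$), and summation over the disjoint patches. The one place where you genuinely diverge is the proof of the transport inequality itself: the paper subtracts the patch mean $\overline{s_h}$, applies an inverse estimate and the global $L^2$-equivalence \eqref{eq:gp} patch-locally, and then needs a Poincar\'e-type step to pass from $h^{-1}\Vert s_h - \overline{s_h}\Vert_{\omega\cap\Omega}$ back to $\Vert \nabla s_h\Vert_{\omega\cap\Omega}$ on the cut part of the patch; you instead work directly from the local property \eqref{eq:gp0}, chain across the boundedly many facets of \cref{ass:patches} to get $h^{-2}$-control of the distance to a \emph{single} polynomial on the patch, and then exploit that the root element $T_\omega$ lies in $\Omega\cap\omega$ together with polynomial norm equivalence. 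Your route is somewhat more self-contained and sidesteps any Poincar\'e inequality on the cut subdomain $\Omega\cap\omega$ (whose uniformity in the cut position is the delicate point in the paper's chain), at the price of invoking the chaining argument behind \eqref{eq:gp0}; the paper's route is shorter because it reuses the already-assumed global property \eqref{eq:gp}. A second, cosmetic difference: you fix $q_h = \Pi^{\mathcal{L}}\pe$ from the start, whereas the paper runs a C\'ea-type estimate with arbitrary $q_h \in \mathbb{P}^{k+1}(\mathcal{T}_\omega)$ and only interpolates at the end; both give the same rate.
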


  \begin{proof}
    We essentially follow the analysis in \cite[Thm. 19]{Zhang20} with adjustments to account for patches and the GP mechanism. Let $\mathcal{T}_\omega \in \mathcal{C}_h$ and $q_h \in \mathbb{P}^{k+1}(\mathcal{T}_\omega)$. We set $s_h := p_h^*-q_h$ and show a GP result for its gradient $\nabla s_h$ where we define $\overline{s_h} = \vert \omega \vert^{-1} (s_h,1)_{\omega}$ as the mean value of $s_h$ on $\omega$:
    \begin{align*}
      \Vert \nabla s_h \Vert_\omega^2 & = \Vert \nabla (s_h - \overline{s_h}) \Vert_\omega^2 \lesssim 
      h^{-2} \Vert (s_h - \overline{s_h}) \Vert_\omega^2 
      \\[-1ex] & \stackrel{\eqref{eq:gp}}{\lesssim} 
      h^{-2} (\Vert (s_h - \overline{s_h}) \Vert_{\omega \cap \Omega}^2 + h^2 j_h^\omega(s_h,s_h))
      \lesssim  
      \Vert  \nabla s_h \Vert_{\omega \cap \Omega}^2 + j_h^\omega(s_h,s_h).    
    \end{align*} 
Hence, we have
      \begin{align*}
       \Vert \nabla s_h \Vert_{\omega}^2 + j_h^{\omega}(s_h,s_h) & \lesssim \Vert \nabla s_h \Vert_{\dom \cap \omega}^2  + j_h^{\omega}(s_h,s_h) \\ & \!=\! (\nabla(p_h^*-q_h),\nabla s_h)_{\dom \cap \omega} + j_h^{\omega}(p_h^*-q_h,s_h) \\
          &\!\!\overset{\eqref{eq:PP2a}}{=}\!\! (u_h,\nabla s_h)_{\dom \cap \omega} - j_h^\omega(q_h,s_h) - (\nabla q_h,\nabla s_h)_{\dom \cap \omega} \\
          &\!\!=\! (\nabla (p-q_h),\nabla s_h)_{\dom \cap \omega} + (u_h-u,\nabla s_h)_{\dom \cap \omega} - j_h^\omega(q_h,s_h),
      \end{align*}
      where the last step follows since $u=\nabla p$. An application of Cauchy-Schwarz and division by $(\Vert \nabla s_h \Vert_{\Omega \cap \omega}^2 + j_h^{\omega}(s_h,s_h)^{\frac12}$ yields 
      \begin{align*}
        \Vert \nabla s_h \Vert_{\omega} + |s_h|_{j,\omega} \leq \Vert \nabla (p-q_h) \Vert_{\dom \cap \omega} + |q_h|_j+ \Vert u_h-u \Vert_{\dom \cap \omega}.
      \end{align*}
      With a triangle inequality we obtain 
      \begin{align*}
          \Vert \nabla(\pe-p_h^*)\Vert_{\omega} + |p_h^\ast|_{j,\omega} &\lesssim \Vert \nabla (\pe-q_h) \Vert_{\omega} + \Vert \nabla s_h \Vert_{\omega} + |s_h|_{j,\omega} + |q_h|_{j,\omega} \\
          &\lesssim \Vert \nabla (\pe-q_h) \Vert_{\omega} + |q_h|_j+ \Vert u_h - u \Vert_{\dom \cap \omega}.
      \end{align*}
      Summing over all patches $\mathcal{T}_\omega \in \mathcal{C}_h$ and using standard interpolation results yields the claim.
  \end{proof}
  
  \begin{lemma}
    Let $p_h^\ast$ be the solution to \eqref{eq:PP2a}--\eqref{eq:PP2b} and $p \in H^{k+2}(\dom)$ be the solution to \eqref{eq:weakForm:a}--\eqref{eq:weakForm:b}. There holds 
      \begin{equation}
          \Vert \pe - p_h^* \Vert_{\OmT} \lesssim h^{k+2} \Vert p \Vert_{H^{k+2}(\dom)} + \Vert \mathcal{E}_h^0 p - \bar p_h \Vert_{\Omi} + h \Vert u - u_h \Vert_\Omega.
      \end{equation}
  \end{lemma}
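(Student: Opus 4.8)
The plan is to upgrade the $H^1$-semi-norm control from \cref{lem:PatchPPH1Est} to an $L^2$-bound by a purely primal, patch-by-patch Poincaré argument; no duality is needed, since the $H^1$-estimate already carries the order $h^{k+1}$ and the Poincaré constant on a patch of diameter $\simeq h$ supplies the missing factor of $h$. First I would fix a patch $\mathcal{T}_\omega \in \mathcal{C}_h$ with domain $\omega$ and set $e := \pe - p_h^*$. Since each patch contains a root element $T_\omega \in \Thi$ which is uncut, we have $T_\omega \subseteq \omega \cap \Omi$, so $|\omega \cap \Omi| \gtrsim h^d$, and by \cref{ass:patches} the number of elements per patch is uniformly bounded, giving $|\omega| \simeq |\omega \cap \Omi| \simeq h^d$. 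Denoting by $\overline{e} := |\omega \cap \Omi|^{-1}(e,1)_{\omega \cap \Omi}$ the mean value of $e$ on the uncut part of the patch, a scaled Poincaré inequality on the shape-regular, connected patch domain yields
\begin{equation*}
  \Vert e \Vert_\omega \leq \Vert e - \overline{e} \Vert_\omega + |\omega|^{\frac12}\,|\overline{e}| \lesssim h \Vert \nabla e \Vert_\omega + |\omega|^{\frac12}\,|\overline{e}|,
\end{equation*}
with a constant that is uniform because $|\omega \cap \Omi|/|\omega|$ is bounded below.

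It then remains to bound the mean-value term $|\omega|^{\frac12}|\overline{e}|$. Here I would use the post-processing constraint \eqref{eq:PP2b}, which gives $(p_h^*,1)_{\omega \cap \Omi} = (\bar p_h,1)_{\omega \cap \Omi}$, so that $(e,1)_{\omega \cap \Omi} = (\pe - \bar p_h, 1)_{\omega \cap \Omi}$. The key observation is that on the uncut interior elements $T \in \Thi$ making up $\omega \cap \Omi$ one has $\chi_\Omega \equiv 1$, whence $\mathcal{E}_h^0 p = \Pi^Q(\chi_\Omega p) = \Pi^Q p$ there, cf.\ \eqref{eq:extbyzeroh}, and $\pe = p$ on $\Omi \subseteq \Omega$. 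Since $1 \in \mathbb{P}^k(\mathcal{T}_\omega)$, the $L^2$-projection error is orthogonal to constants element-wise, so $(\pe - \mathcal{E}_h^0 p, 1)_{\omega \cap \Omi} = (p - \Pi^Q p, 1)_{\omega \cap \Omi} = 0$. Consequently
\begin{equation*}
  (e,1)_{\omega \cap \Omi} = (\mathcal{E}_h^0 p - \bar p_h, 1)_{\omega \cap \Omi}, \qquad |\omega|^{\frac12}\,|\overline{e}| \lesssim \frac{|\omega|^{\frac12}}{|\omega \cap \Omi|^{\frac12}}\,\Vert \mathcal{E}_h^0 p - \bar p_h \Vert_{\omega \cap \Omi} \lesssim \Vert \mathcal{E}_h^0 p - \bar p_h \Vert_{\omega \cap \Omi},
\end{equation*}
using Cauchy--Schwarz and $|\omega| \simeq |\omega \cap \Omi|$.

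Combining the two displays and summing the squares over all patches $\mathcal{T}_\omega \in \mathcal{C}_h$ (whose domains tile $\OmT$, and whose uncut parts tile $\Omi$) yields
\begin{equation*}
  \Vert \pe - p_h^* \Vert_{\OmT} \lesssim h \Vert \nabla(\pe - p_h^*) \Vert_{\OmT} + \Vert \mathcal{E}_h^0 p - \bar p_h \Vert_{\Omi}.
\end{equation*}
Finally I would insert the gradient estimate of \cref{lem:PatchPPH1Est}, which controls $\Vert \nabla(\pe - p_h^*) \Vert_{\OmT}$ by $h^{k+1}\Vert p \Vert_{H^{k+2}(\dom)} + \Vert u - u_h \Vert_\Omega$; multiplying by the Poincaré factor $h$ turns these into the claimed $h^{k+2}\Vert p \Vert_{H^{k+2}(\dom)}$ and $h\Vert u - u_h \Vert_\Omega$ terms, giving exactly the asserted bound. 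The main technical point to be careful with is the uniformity of the Poincaré constant: it must be independent of $h$ and of the cut position, which is precisely what \cref{ass:patches} guarantees through the uniformly bounded number of shape-regular elements per patch and the non-degenerate uncut part $\omega \cap \Omi \supseteq T_\omega$; the cancellation of $(\pe - \mathcal{E}_h^0 p, 1)_{\omega \cap \Omi}$ via $\mathcal{E}_h^0 p = \Pi^Q p$ on interior uncut elements is the other step that deserves explicit justification.
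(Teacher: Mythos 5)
Your overall architecture (split off a patchwise mean value, bound the fluctuation by a Poincar\'e argument, use the constraint \eqref{eq:PP2b} to control the mean) is the same as the paper's, and your treatment of the mean-value term is correct; indeed, the cancellation $(\pe - \mathcal{E}_h^0 p,1)_{\omega\cap\Omi}=0$ via $\mathcal{E}_h^0 p = \Pi^Q p$ on uncut elements, combined with $|\omega|\simeq|\omega\cap\Omi|$, is a clean way to exploit \eqref{eq:PP2b} exactly as it is imposed (the paper instead takes means over the root element only). However, there is a genuine gap in your fluctuation estimate: $p_h^\ast$ lives in $\mathbb{P}^{k+1}(\mathcal{T}_\omega)$, which by the paper's definition is a space of \emph{discontinuous} piecewise polynomials on the patch -- this is precisely why the ghost-penalty term $j_h^\omega$ appears in \eqref{eq:PP2a}. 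Hence $e=\pe-p_h^\ast$ is only piecewise $H^1$ on $\omega$, and the classical scaled Poincar\'e inequality $\Vert e-\overline{e}\Vert_\omega \lesssim h\Vert\nabla e\Vert_\omega$ that you invoke is false for such broken functions: a piecewise constant taking distinct values on two elements of the patch has vanishing broken gradient but nonzero fluctuation. The correct tool is a DG (broken) Poincar\'e inequality of the form
\begin{equation*}
  \Vert e - \overline{e} \Vert_{\omega}^2 \lesssim h^2 \Vert \nabla_h e \Vert_{\omega}^2 + \sum_{F\in\mathcal{F}_h^\omega} h \Vert \jump{e} \Vert_F^2,
\end{equation*}
where $\nabla_h$ is the elementwise gradient, and the jump contributions must be controlled.

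This gap is fixable with ingredients you already have, and the repair is exactly what the paper does. Since $\pe$ is continuous, $\jump{e}=\jump{p_h^\ast}$ on each $F\in\mathcal{F}_h^\omega$; because $\jump{v_h}=0$ for every $v_h\in\mathcal{P}^{k+1}(\omega_F)$, a discrete trace estimate together with \eqref{eq:gp0} gives
\begin{equation*}
  h\Vert\jump{p_h^\ast}\Vert_F^2 = \inf_{v_h\in\mathcal{P}^{k+1}(\omega_F)} h\Vert\jump{p_h^\ast - v_h}\Vert_F^2
  \lesssim \inf_{v_h\in\mathcal{P}^{k+1}(\omega_F)} \Vert p_h^\ast - v_h\Vert_{\omega_F}^2
  \lesssim h^2\, j_F^{\omega}(p_h^\ast,p_h^\ast).
\end{equation*}
This is precisely why \cref{lem:PatchPPH1Est} carries the term $|p_h^\ast|_{j,\omega}$ on its left-hand side: you must invoke that part of the lemma as well, not only the broken-gradient bound, to conclude $\Vert e-\overline{e}\Vert_\omega \lesssim h\bigl(\Vert\nabla_h e\Vert_\omega + |p_h^\ast|_{j,\omega}\bigr)$, which after summation over patches yields the claimed $h^{k+2}\Vert p\Vert_{H^{k+2}(\Omega)} + h\Vert u-u_h\Vert_\Omega$. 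With this amendment your argument is complete and essentially coincides with the paper's proof.
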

  \begin{proof}
    For $\mathcal{T}_\omega \in \mathcal{C}_h$, we introduce the projection $Q_\omega$ defined by
      \begin{equation*}
        Q_\omega \pe = Q_\omega p := \vert T_\omega \vert^{-1} (p,1)_{T_\omega}, \text{ where } T_\omega \in \Thi \cap \mathcal{T}_\omega \text{ is the root element}.
      \end{equation*}
      The triangle inequality gives 
      \begin{equation*}
          \Vert \pe - p_h^* \Vert_{\omega} \le \Vert (I-Q_\omega)(\pe-p_h^*) \Vert_{\omega} + \Vert Q_\omega (\pe-p_h^*) \Vert_{\omega} = I + II.
      \end{equation*}
      With $e_p' := (I-Q_\omega)(\pe-p_h^*)$ there is $Q_\omega e_p' = 0$ and we can apply a DG version of the Poincaré inequality on the patch to the first term. Using $\nabla Q_\omega(\pe-p_h^*) = 0$ we obtain that
      \begin{align*}
        I^2 =  \Vert e_p' \Vert_{\omega}^2 %
          &\lesssim h^2 \vert \nabla e_p' \vert_{\omega}^2  \!+ \!\!\sum_{F\in\mathcal{F}_h^\omega} h \nrm{\jump{e_p'}}_F^2 
          \lesssim h^2 \vert \nabla e_p' \vert_{\omega}^2  \!+ \!\!\sum_{F\in\mathcal{F}_h^\omega} h \nrm{\jump{p_h^\ast}}_F^2 \\
          & \lesssim h^2 (\vert \pe  -p_h^* \vert_{H^1(\omega)}^2 \!+\!\!
          |p_h^\ast|_{j,\omega}^2 )
  \end{align*}
  where we exploited that for arbitrary $v_h \in \mathcal{P}^{k+1}(\omega_F)$ there holds $\jump{v_h} = 0$ on $F \in \mathcal{F}_h^{\omega}$ and hence \vspace*{-0.3cm}
  $$
  h \Vert \jump{p_h^\ast} \Vert_{F}^2 = \!\!\inf_{v_h \in \mathcal{P}^{k+1}(\omega_F)}  \!\! h \Vert \jump{p_h^\ast - v_h} \Vert_{F}^2 \lesssim \!\! \inf_{v_h \in \mathcal{P}^{k+1}(\omega_F)} \!\! \Vert p_h^\ast - v_h \Vert_{\omega_F}^2 \!\! \stackrel{\eqref{eq:gp0}}{\lesssim} \!\! h^2 j_F^{\omega}(p_h^\ast,p_h^\ast)
  $$
  From standard interpolation results and \cref{lem:PatchPPH1Est} there holds
  \begin{align*}
    I = \Vert e_p' \Vert_{\omega}    &\lesssim h^{k+2} \Vert \pe \Vert_{H^{k+2}( \omega)} + h \Vert u - u_h \Vert_{\omega \cap \Omega},
      \end{align*}
      To consider the second term, we apply the constraint equation \eqref{eq:PP2b} of the post-processing scheme. Let $T_\omega \in \mathcal{T}_\omega \cap \omega$ be the root element. Then, it holds that 
      \begin{equation*}
       II =   \Vert Q_\omega(p-p_h^*) \Vert_{\dom \cap \omega} \le \Vert Q_\omega(p-p_h^*) \Vert_{T_\omega} = \Vert Q_\omega(p-\bar p_h) \Vert_{T_\omega} \le \Vert \mathcal{E}_h^0 p - \bar p_h \Vert_{T_\omega}.
      \end{equation*}
      Summing over all patches $\mathcal{T}_\omega \in \mathcal{C}_h$ yields the claim.
  \end{proof}

  We will conclude convergence rates for $p_h^\ast$ only after the analysis of the elementwise post-processing scheme in the next section.

  \subsubsection{Element-local post-processing}
  We now turn to the elementwise post-processing scheme. 
  \begin{lemma}\label{lem:ElPPH1Est}
    Assuming $\gamma_u > 0$, let $p_h^\ast$ be the solution to \eqref{eq:PP1a}--\eqref{eq:PP1c} and $p \in H^{k+2}(\dom)$ the solution to \eqref{eq:weakForm:a}--\eqref{eq:weakForm:b}. There holds 
      \begin{equation} \label{eq:ElPPH1Est}
          \Vert \nabla(\pe - p_h^*) \Vert_{\OmT} \lesssim h^{k+1} \Vert p \Vert_{H^{k+2}(\Omega)} + \Vert u - u_h \Vert_{\OmT}
      \end{equation}
  \end{lemma}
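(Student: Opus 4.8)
The plan is to exploit that the scheme \eqref{eq:PP1a}--\eqref{eq:PP1c} is purely element-local, so the $H^1$-seminorm estimate can be established on each $T \in \Th$ separately and then assembled by summation. Since the seminorm does not see additive constants, the normalization constraints \eqref{eq:PP1b}--\eqref{eq:PP1c} play no role in this bound; they will only enter the subsequent $L^2$ estimate. This makes the present argument strictly simpler than the patchwise \cref{lem:PatchPPH1Est}.

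First I would fix $T \in \Th$, choose an arbitrary comparison polynomial $q_h \in \mathcal{P}^{k+1}(T)$, and set $s_h := p_h^* - q_h \in \mathcal{P}^{k+1}(T)$. Testing \eqref{eq:PP1a} with $q_h^* = s_h$ gives
\[
\Vert \nabla s_h \Vert_T^2 = (\nabla p_h^* - \nabla q_h, \nabla s_h)_T = (u_h - \nabla q_h, \nabla s_h)_T.
\]
Using $\ue = \nabla \pe$ on $\OmT \supseteq T$, I would split $u_h - \nabla q_h = (u_h - \ue) + \nabla(\pe - q_h)$, apply Cauchy--Schwarz, and divide by $\Vert \nabla s_h \Vert_T$ to obtain $\Vert \nabla s_h \Vert_T \le \Vert \ue - u_h \Vert_T + \Vert \nabla(\pe - q_h)\Vert_T$. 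A triangle inequality then yields, on each element,
\[
\Vert \nabla(\pe - p_h^*)\Vert_T \lesssim \Vert \nabla(\pe - q_h)\Vert_T + \Vert \ue - u_h \Vert_T.
\]

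Next I would take $q_h$ to be a standard interpolant of $\pe$ into $\mathcal{P}^{k+1}(T)$, so that $\Vert \nabla(\pe - q_h)\Vert_T \lesssim h^{k+1}\Vert \pe \Vert_{H^{k+2}(T)}$. Squaring, summing over all $T \in \Th$, and invoking the stability of the Sobolev extension $\Vert \pe \Vert_{H^{k+2}(\OmT)} \lesssim \Vert p \Vert_{H^{k+2}(\Omega)}$ delivers
\[
\Vert \nabla(\pe - p_h^*)\Vert_{\OmT} \lesssim h^{k+1}\Vert p \Vert_{H^{k+2}(\Omega)} + \Vert \ue - u_h \Vert_{\OmT},
\]
which is the claimed bound (with $u$ understood as its extension $\ue$ on $\OmT$).

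I expect no serious obstacle: the computation is routine and, unlike the patchwise case, requires neither the DG-Poincaré step nor the GP mechanism to pass from the gradient on all of $T$ to its restriction to $\dom \cap T$. The one point worth stressing is \emph{why} $\gamma_u > 0$ is assumed. The derivation of the inequality itself does not use it; rather, the right-hand side measures the flux error $\Vert \ue - u_h \Vert_{\OmT}$ over the whole active mesh instead of only over $\Omega$, and such control is only available for $\gamma_u > 0$, cf. \eqref{eq:errorest1b} in \cref{thm:errorest1}. This is exactly the ingredient that will later let us convert the bound into a genuine $h^{k+1}$ convergence rate for $p_h^\ast$.
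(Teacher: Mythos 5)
Your proposal is correct and follows essentially the same route as the paper: the paper's proof of this lemma is simply the remark that the patchwise argument of \cref{lem:PatchPPH1Est} carries over with patches replaced by single elements $T$, the GP term $j_h^\omega$ set to zero, integrals taken over all of $T$ rather than $T\cap\Omega$, and the extension bound $\Vert \pe \Vert_{H^{k+2}(\OmT)} \lesssim \Vert p \Vert_{H^{k+2}(\Omega)}$ invoked at the end — which is exactly the specialized computation you write out (test \eqref{eq:PP1a} with $s_h = p_h^*-q_h$, split via $\ue=\nabla\pe$, Cauchy--Schwarz, triangle inequality, interpolation, sum over elements). Your closing observation about the role of $\gamma_u>0$ also matches the paper's own comment following the lemma: the hypothesis is not needed for the inequality itself but to make its right-hand side $\Vert \ue - u_h\Vert_{\OmT}$ controllable.
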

  \begin{proof}
    This follows as in \cref{lem:PatchPPH1Est} with $T_\omega \in \mathcal{C}_h$ and $\Omega$ replaced by $T \in \mathcal{T}_h$ and $\OmT$, respectively, $j_h^\omega(u_h,v_h) = 0$ vanishing and $\Vert p^{\mathcal{E}} \Vert_{H^{k+2}(\OmT)} \lesssim \Vert p \Vert_{H^{k+2}(\Omega)}$.
  \end{proof}
  Note that the r.h.s. in \eqref{eq:ElPPH1Est} depends on the $L^2$-norm of the $u$-error measured on $\OmT$ and thus relies on $\gamma_u > 0$.

  \begin{lemma}
    Assuming $\gamma_u > 0$, let $p_h^\ast$ be the solution to \eqref{eq:PP1a}--\eqref{eq:PP1c} and $p \in H^{k+2}(\dom)$ be the solution to \eqref{eq:weakForm:a}--\eqref{eq:weakForm:b}. There holds
      \begin{equation}
          \Vert \pe - p_h^* \Vert_{\OmT} \lesssim h^{k+2} \Vert p \Vert_{H^{k+2}(\Omega)} + \Vert \mathcal{E}_h^0 p - \bar p_h \Vert_{\Omi} + h \Vert u - u_h \Vert_{\OmT}.
      \end{equation}
  \end{lemma}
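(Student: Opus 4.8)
The plan is to mirror the proof of the (unlabeled) $L^2$ lemma just established for the patchwise scheme, working now element by element over $\Th$ rather than patch by patch: the patchwise gradient control is replaced by the element-local estimate \cref{lem:ElPPH1Est}, the patch stabilization semi-norm drops out, and the single constraint \eqref{eq:PP2b} is replaced by the two cases \eqref{eq:PP1b}, \eqref{eq:PP1c}. Throughout I abbreviate $e := \pe - p_h^\ast$ and write $\Pi_T^0$ for the $L^2(T)$-projection onto constants (the mean value on $T$). On each $T \in \Th$ I decompose $e = (\id - \Pi_T^0)e + \Pi_T^0 e$ and estimate the two summands separately.

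The mean-free part is handled in a standard, cut-independent way. A Poincar\'e inequality on the shape-regular simplex $T$ gives $\Vert (\id - \Pi_T^0)e\Vert_T \lesssim h\,\Vert \nabla e\Vert_T$. Summing over $T\in\Th$ and invoking \cref{lem:ElPPH1Est} together with the extension bound $\Vert \pe\Vert_{H^{k+2}(\OmT)} \lesssim \Vert p\Vert_{H^{k+2}(\Omega)}$ produces a contribution bounded by $h^{k+2}\Vert p\Vert_{H^{k+2}(\Omega)} + h\,\Vert u - u_h\Vert_{\OmT}$, which already accounts for the first and third terms on the right-hand side of the claim.

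It remains to control the constant $\Pi_T^0 e$ on each element, and this is where the two constraints of the scheme enter. For an interior element $T\in\Thi$ one has $\chi_\Omega \equiv 1$ on $T$, hence $\Pi_T^0 \pe = \Pi_T^0\mathcal{E}_h^0 p$ (using $\mathcal{E}_h^0 p = \Pi^Q p$ on $T$ and that constants lie in $Q_h$), while \eqref{eq:PP1b} gives $\Pi_T^0 p_h^\ast = \Pi_T^0\bar p_h$; therefore $\Vert \Pi_T^0 e\Vert_T = \Vert \Pi_T^0(\mathcal{E}_h^0 p - \bar p_h)\Vert_T \le \Vert \mathcal{E}_h^0 p - \bar p_h\Vert_T$, and summation over $\Thi$ yields precisely the term $\Vert \mathcal{E}_h^0 p - \bar p_h\Vert_{\Omi}$. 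For a cut element $T\in\ThG$ I instead use that $\pe|_\Gamma = p_D$ together with \eqref{eq:PP1c} to conclude $\int_{T\cap\Gamma} e = (p_D - p_h^\ast,1)_{T\cap\Gamma} = 0$. Writing $\Pi_T^0 e\,\meas_{d-1}(T\cap\Gamma) = \int_{T\cap\Gamma}\Pi_T^0 e = -\int_{T\cap\Gamma}(\id - \Pi_T^0)e$ and estimating the right-hand side with a trace inequality $\Vert (\id-\Pi_T^0)e\Vert_{T\cap\Gamma} \lesssim h^{\frac12}\Vert \nabla e\Vert_T$ gives $\Vert \Pi_T^0 e\Vert_T \lesssim h\,\Vert \nabla e\Vert_T$, which after summation merges into the $h^{k+2}$ and $h\,\Vert u - u_h\Vert_{\OmT}$ terms as before.

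The main obstacle is exactly this last step. Converting the vanishing mean on the interface piece $T\cap\Gamma$ into a bound on the element mean $\Pi_T^0 e$ introduces the geometric ratio $\meas_d(T)/\meas_{d-1}(T\cap\Gamma)$, and keeping the overall factor of order $h$ requires a trace/Poincar\'e estimate on cut cells that is robust with respect to this ratio — this is the delicate ingredient, and it is the reason the element-local scheme, unlike the patchwise one, must lean on the exact Dirichlet data and on the interface being adequately resolved by the cut cell. I would therefore spend the bulk of the argument on establishing this robust interface trace-Poincar\'e inequality for $e$ on elements of $\ThG$; once it is available, collecting the three contributions above and summing over $\Th$ closes the estimate.
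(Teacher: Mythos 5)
Your decomposition, your treatment of interior elements via \eqref{eq:PP1b}, and the Poincar\'e-plus-\cref{lem:ElPPH1Est} bound for the mean-free part all coincide with the paper's argument. The genuine gap is the cut-element case. Converting the vanishing interface mean $\int_{T\cap\Gamma}e=0$ into control of the \emph{element} mean via Cauchy--Schwarz and the trace inequality gives, as your own computation shows,
\begin{equation*}
  \Vert \Pi_T^0 e\Vert_T \;\lesssim\; \Bigl(\tfrac{h^{d-1}}{\meas_{d-1}(T\cap\Gamma)}\Bigr)^{1/2}\, h\,\Vert\nabla e\Vert_T,
\end{equation*}
and the prefactor is \emph{not} uniformly bounded: membership in $\ThG$ only requires $\meas_{d-1}(T\cap\Gamma)>0$, so for an element barely clipped by $\Gamma$ this ratio is arbitrarily large, while every hidden constant in this lemma must be independent of the cut position. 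You acknowledge this and defer to a ``robust interface trace--Poincar\'e inequality'', but you never establish it, and in the form your argument needs it (measure ratio absorbed into a cut-independent constant) it cannot hold for general $H^1$ functions: one can make $e$ vanish on a neighborhood of a tiny interface piece at arbitrarily small $H^1$-cost while $\Vert e\Vert_T\simeq h$. Your closing remark that the scheme needs ``the interface being adequately resolved by the cut cell'' is effectively an additional hypothesis on the cut configuration that the lemma does not make, so even a completed version of your sketch would prove a weaker statement.

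The paper closes this case by a different choice of projection, and that choice is exactly what removes the ratio: on $T\in\ThG$ it takes $Q_T v := \vert T\cap\Gamma\vert^{-1}(v,1)_{T\cap\Gamma}$, the mean over the interface piece rather than over the element. Then \eqref{eq:PP1c} together with $\pe\vert_\Gamma = p_D$ yields $Q_T(\pe - p_h^\ast) = Q_T(\pe - p_D) = 0$ \emph{identically} --- no trace inequality, no measure factors --- and the entire cut-element contribution is carried by the Poincar\'e-type inequality for the complementary part $(I-Q_T)(\pe-p_h^\ast)$, which has vanishing mean on $T\cap\Gamma$, followed by \cref{lem:ElPPH1Est} exactly as on interior elements. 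In other words, the constraint is tested in precisely the functional in which it is imposed (the $T\cap\Gamma$-average) instead of being transported to the element average; your route reconstructs that control in the crudest way (Cauchy--Schwarz on $T\cap\Gamma$) and thereby picks up the polynomially degenerating measure ratio. To repair your write-up, replace $\Pi_T^0$ by $Q_T$ on cut elements; your interior-element and mean-free arguments can remain unchanged.
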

  \begin{proof}
      Let $T \in \mathcal{T}_h$. To incorporate the constraint equations \eqref{eq:PP1b}, \eqref{eq:PP1c}, we define
      \begin{equation*}
          Q_T p := \begin{cases}
              \frac{1}{\vert T \vert} (p,1)_T &\text{ if } T \in \Thi, \\
              \frac{1}{\vert T \cap \Gamma \vert} (p,1)_{T \cap \Gamma} &\text{ if } T \in \ThG,
          \end{cases}
      \end{equation*}
      Then, the triangle inequality gives 
      \begin{equation*}
          \Vert \pe - p_h^* \Vert_{T} \le \Vert (I-Q_T)(\pe-p_h^*) \Vert_{T} + \Vert Q_T (\pe - p_h^*) \Vert_{T}.
      \end{equation*}
      For the first term, we note that $Q_T(I-Q_T)(\pe-p_h^*)=0$. Hence, we can apply the Poincaré inequality. Using $\nabla Q_T(\pe-p_h^*)=0$ and \cref{lem:ElPPH1Est} yields
      \begin{align*}
          \Vert (I-Q_T)(\pe-p_h^*) \Vert_{T} &\lesssim h \vert (I-Q_T)(\pe-p_h^*) \vert_{H^1(T)}\\ 
          &\simeq h \vert \pe-p_h^* \vert_{H^1(T)} \lesssim h^{k+2} \Vert \pe \Vert_{H^{k+2}(T)} + h \Vert u - u_h \Vert_T,
       \end{align*}
       For the second term, we differentiate between cut and interior elements and apply the constraint equations. For $T \in \ThG$, \eqref{eq:PP1c} yields
      $
        \Vert Q_T (\pe - p_h^*) \Vert_{T} = \Vert Q_T (\pe - p_D) \Vert_{T} = 0,
      $
       since $\pe \vert_{ \partial \dom} = p_D$. Now, let $T \in \ThG$ and apply \eqref{eq:PP1b} 
       \begin{equation*}
          \Vert Q_T (\pe - p_h^*) \Vert_{T} = \Vert Q_T (\pe - \bar p_h) \Vert_{T} \le \Vert \mathcal{E}_h^0 p - \bar p_h \Vert_{T}.
       \end{equation*}
       Summing over all elements $T \in \mathcal{T}_h$ yields the claim.
  \end{proof}
  
\subsubsection{Error bound for post-processed solutions}
From the previous two sections, we can conclude the following error bounds for the post-processed solutions $p_h^\ast$.

\begin{corollary}
  Let $p_h^{\ast}$ be the solution to \eqref{eq:PP1a}--\eqref{eq:PP1c} and 
 $\gamma_u > 0$ or $p_h^{\ast}$ be the solution to \eqref{eq:PP2a}--\eqref{eq:PP2b}. Further, we assume $p \in H^{k+2}(\dom)$ to be the solution to \eqref{eq:weakForm:a}--\eqref{eq:weakForm:b}, \cref{ass:fh} to hold with $k_f=k$ for $f \in H^{k+1}(\Omega)$ and $L^2$-$H^2$-regularity to hold for $\Omega$. Then, there holds
  \begin{equation}
      \Vert \pe - p_h^* \Vert_{\OmT} \lesssim h^{k+2} ( \Vert p \Vert_{H^{k+2}(\Omega)} + \nrm{f}_{H^{k+1}(\Omega)} ).
  \end{equation}
\end{corollary}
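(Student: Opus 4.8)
The plan is to combine the two $L^2$-estimates established in the preceding two lemmas (for the patchwise and the element-local schemes, respectively) with the a priori bounds on the flux error and on $\mathcal{E}_h^0 p - \bar p_h$ from the corollary following \cref{lem:dual}. Both post-processing lemmas deliver an estimate of the form
\[
\Vert \pe - p_h^* \Vert_{\OmT} \lesssim h^{k+2} \Vert p \Vert_{H^{k+2}(\Omega)} + \Vert \mathcal{E}_h^0 p - \bar p_h \Vert_{\Omi} + h \Vert u - u_h \Vert_{D},
\]
with $D = \Omega$ for the patchwise scheme and $D = \OmT$ for the element-local scheme. It therefore suffices to show that the two remaining terms on the right are each of order $h^{k+2}$ with the claimed data dependence.

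To this end I would apply the corollary after \cref{lem:dual} with $\ell = k$. Its hypotheses are met under the present assumptions: since $p \in H^{k+2}(\Omega)$ we have $u = \nabla p \in H^{k+1}(\Omega)$ with $\Vert u \Vert_{H^{k+1}(\Omega)} \lesssim \Vert p \Vert_{H^{k+2}(\Omega)}$; the source obeys $f \in H^{k+1}(\Omega) = H^{\ell+1}(\Omega)$; the domain enjoys $L^2$-$H^2$-regularity; and \cref{ass:fh} holds with $k_f = k \ge \ell$ and $r = \ell+1 = k+1$. This yields the flux bound
\[
\Vert \ue - u_h \Vert_{\Sigma} \lesssim h^{k+1}\bigl( \Vert p \Vert_{H^{k+2}(\Omega)} + \Vert f \Vert_{H^{k+1}(\Omega)} \bigr),
\]
together with the higher-order pressure bound \eqref{eq:errorestp}, namely $\Vert \mathcal{E}_h^0 p - \bar p_h \Vert_{\OmT} \lesssim h^{k+2}\bigl( \Vert p \Vert_{H^{k+2}(\Omega)} + \Vert f \Vert_{H^{k+1}(\Omega)} \bigr)$.

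It then remains to insert these into the displayed estimate. For the consistency term I would use $\Omi \subset \OmT$ to get $\Vert \mathcal{E}_h^0 p - \bar p_h \Vert_{\Omi} \le \Vert \mathcal{E}_h^0 p - \bar p_h \Vert_{\OmT}$, which is already $\mathcal{O}(h^{k+2})$. For the flux term I would observe that, whether $D = \Omega$ or $D = \OmT$, there holds $\Vert u - u_h \Vert_{D} \le \Vert \ue - u_h \Vert_{\Sigma}$, so that $h \Vert u - u_h \Vert_{D} \lesssim h \cdot h^{k+1}(\cdots) = \mathcal{O}(h^{k+2})$; summing the three $\mathcal{O}(h^{k+2})$ contributions and using the regularity transfer $\Vert u \Vert_{H^{k+1}(\Omega)} \lesssim \Vert p \Vert_{H^{k+2}(\Omega)}$ gives the claim. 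There is no substantial obstacle here, as the result is a bookkeeping synthesis of earlier estimates; the only point requiring care is that the element-local bound measures the flux error on the full active mesh $\OmT$, which is precisely where the hypothesis $\gamma_u > 0$ enters, since it makes $\Vert \cdot \Vert_{\Sigma}$ control $\Vert \cdot \Vert_{\OmT}$, whereas the patchwise scheme only needs the error on $\Omega$.
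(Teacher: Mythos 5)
Your proposal is correct and follows exactly the route the paper intends: the corollary is stated as a direct consequence of the two post-processing lemmas combined with the flux bound and the estimate \eqref{eq:errorestp} from the corollary following \cref{lem:dual} (applied with $\ell = k$, using $\Vert u \Vert_{H^{k+1}(\Omega)} = \Vert \nabla p \Vert_{H^{k+1}(\Omega)} \lesssim \Vert p \Vert_{H^{k+2}(\Omega)}$ and $\Vert \mathcal{E}_h^0 p - \bar p_h \Vert_{\Omi} \leq \Vert \mathcal{E}_h^0 p - \bar p_h \Vert_{\OmT}$). Your closing remark correctly identifies the one delicate point, namely that $\Vert \cdot \Vert_{\Sigma}$ only controls the flux error on $\OmT$ when $\gamma_u > 0$, which is precisely why that hypothesis is attached to the element-local scheme but not to the patchwise one.
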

Note that for $p \in H^{k+3}(\Omega)$ we can directly induce $f = - \Delta p \in H^{k+1}(\Omega)$ so that 
$\Vert \pe - p_h^* \Vert_{\OmT} \lesssim h^{k+2} \Vert p \Vert_{H^{k+3}(\Omega)}$, cf. \cref{rem:freg}.

\section{Extensions} \label{sec:further}
In this section we want to discuss two extensions of the proposed method. First, we want to discuss how Neumann-type boundary conditions can be prescribed in the given framework. This will be of specific interest when considering a generalization of this discretization approach for Dirichlet boundary conditions for the Stokes (and Stokes-type) problems. Afterwards, we will discuss an equivalent hybridized mixed formulation for the Dirichlet case that allows to improve the computational efficiency of the unfitted mixed formulation.

\subsection{Neumann-type boundary conditions} \label{sec:neumann}

Instead of $p=p_D$ on $\Gamma$ we now ask for $u\cdot n = g_N$ on $\Gamma$ and factor out the constant on $Q$ and consequently also on $Q_h$, i.e. $Q_h = \mathbb{P}^{k}(\Th)/ \mathbb{R}$. In this section we only  sketch a method that we will also investigate in the numerical examples, but  leave a complete analysis for future work.

As we want to put emphasis on the mass conservation we impose boundary conditions through a stabilized Lagrange multiplier method avoiding the interference with the mass balance. This leads to the following discrete variational formulation:
Find $(u_h,\bar p_h, \lambda_h) \in \Sh \times Q_h \times F_h$ with $F_h \coloneqq \mathbb{P}^k(\ThG)$, s.t.
\begin{subequations}
\begin{align}
  a_h(u_h,v_h) &+ b_h(v_h,\bar p_h) \!\!\!\!\!\!\!\!\!\!\!\!\!\!\!& + ~ c_h(v_h,\lambda_h) =&  ~~0 && \forall v_h \in \Sh, \label{eq:neumann:a} \tag{N-a}\\
  b_h(u_h, q_h) && =&~~ (-f_h, q_h)_{\Gamma} && \forall q_h \in Q_h, \label{eq:neumann:b} \tag{N-b} \\
  c_h(u_h,\mu_h) && ~~~-~ j_h^{\Gamma}(\lambda_h,\mu_h) =&~~ (g_N,\mu_h)_\Gamma && \forall \mu_h \in F_h, \label{eq:neumann:c} \tag{N-c}
\end{align}
\end{subequations}
with $c_h(v_h,\mu_h) \coloneqq (v_h \cdot n, \mu_h)_{\Gamma}$
and 
$$
j_h^\Gamma(\lambda_h,\mu_h) \coloneqq \sum_{F\in\mathcal{F}_h^\Gamma} \gamma^N_F h^{-1} j_F(\lambda_h,\mu_h) + \sum_{T\in\mathcal{T}_h} \gamma^N_T h (\nabla \lambda_h \!\cdot\! n_h, \nabla \mu_h \!\cdot\! n_h)_T,
$$
where
$\mathcal{F}_h^\Gamma \coloneqq \mathcal{F}_h(\ThG)$ is the set of facets within the domain of cut elements, $\gamma^N_T,~\gamma^N_F > 0$ are stabilization parameters and 
$n_h$ is a suitable quasi-normal field to $\Gamma$, i.e. a field that has $n_h|_\Gamma = n$, the outer normal to $\Gamma$, on $\Gamma$, is smooth in $\OmG$ and approximately parallel to $\nabla d$ where $d$ is the signed distance function to $\Gamma$.

Following \cite{burman2014projection}, cf. especially the discussion in \cite[Section IV.A]{burman2014projection} for the imposition of boundary conditions in fictitious domain methods, we can interpret $j_h^\Gamma(\cdot,\cdot)$ as a penalty term that penalizes the distance of the Lagrange multiplier to a subspace $F_h^{c}$ in $F_h$ which consists of functions on a coarser (agglomerated) mesh,
for which an inf-sup stability result of the form 
$$
\inf_{\mu_h \in F_h^c} \sup_{v_h \in \Sigma_h} \frac{c_h(v_h,\mu_h)}{\Vert v_h \Vert_{\Sigma} \Vert \mu_h \Vert_{\frac12,h,\Gamma}} \geq c
$$ 
for a constant $c$ independent of the mesh size and the cut configuration holds, but which is typically not known explicitly. To prove the existence of such a subspace $F_h^c$ is more involved here than in \cite[Section IV.A]{burman2014projection} (or in \cite{burman2022cut} for the Stokes problem) where only the lowest order case has been considered. Adjustments to higher order as well as the normal extension are required. We leave the analysis for future work.

From these arguments we see that $\mathcal{F}_h^\Gamma$ should optimally be chosen as the set of facets on the interior of the patches that correspond to the coarse elements of the space $F_h^c$. However, as this is not known explicitly, we choose $\mathcal{F}_h^\Gamma$ as the set of all facets that are cut by $\Gamma$ and hence include interior facets of the patches.
The rationale behind the second term of $j_h^\Gamma(\cdot,\cdot)$, which is only relevant for $k>0$, is that functions in $F_h$ are defined in the volume, but are only relevant on $\Gamma$, similar to problems faced in higher order TraceFEM, cf. also \cite{grande2018analysis,burman2018cut}. Orthogonal to $\Gamma$ this second term describes an additional condition on $\lambda_h$ in order to make it well-defined not only on $\Gamma$ but also in the volume $\OmG$. In view of the analysis sketched before, this corresponds to a coarse subspace $F_h^c$ consisting of functions that are (at least approximately) constant in normal direction.

\subsection{Hybridization}
In the case of Dirichlet boundary conditions and the case $\gamma_u = 0$, i.e. no GP stabilization is used, the 
communication between degrees of freedom takes place only through the volume elements as in the body-fitted case which allows to apply hybridization to obtain an \emph{equivalent} formulation which
allows for static condensation into a formulation that leads to a linear system with symmetric
positive definite system matrix.
To this end we introduce a new facet finite element space for the Lagrange
multiplier of the normal continuity, $\Lambda_h \coloneqq \mathbb{P}^k(\mathcal{F}_h)$ and break up the
normal-continuity of $\Sh$ leading to $\Sh^- = \mathbb{R}\mathbb{T}_-^k(\Th)$.
We formulate the hybridized mixed formulation (we use subscript $T$ instead of $h$ to emphasize the association to the volume instead of a facet): Find $(u_h,\bar p_T, \bar p_F)
\in \Sh^- \times Q_h \times \Lambda_h$ such that
    \begin{align}
      && a_h(u_h,v_h) &~+~& \hspace*{-0.75cm} b_h(v_h,\bar p_T) + c(v_h,\bar p_F) & = (v_h \cdot  n,p_D)_{\Gamma} && \forall v_h \in \Sh^-, \label{eq:new:hybridmixedprob:a} \tag{HM-a}\\
       && b_h(u_h,\bar q_T)&& &= -(f_h, \bar q_T)_{\OmT} && \forall \bar q_T \in Q_h,  \label{eq:new:hybridmixedprob:b} \tag{HM-b} \\
        && c_h(u_h,\bar q_F) && &= 0 && \forall \bar q_F \in \Lambda_h,  \label{eq:new:hybridmixedprob:c} \tag{HM-c}
    \end{align}
    with $c_h(v_h,\bar q_F) = - (\jump{v_h \cdot n}, \bar q_F)_{\mathcal{F}_h} =
    \sum_{T \in \mathcal{T}_h}
    - (v_h \cdot n, \bar q_F)_{\partial T \setminus \partial \OmG}$.

    Note that the degrees of freedoms of $\Sh^-$ and $Q_h$ are element-local
    and only depend on $\bar p_F$ (and the r.h.s.) so that they can be
    eliminated (as the local problems are uniquely solvable). This leads to a
    (coercive) formulation for $\bar p_F$ of the form $\ell_h(\bar p_F,\bar q_F) = g_h(\bar q_F)$ for all $\bar q_F \in \Lambda_h$, cf. \cite{brezzi2012mixed}. After obtaining $\bar p_F$ the remaining system for $u_h$ and $\bar p_T$ can be solved locally on each element (in parallel). 

    \begin{lemma}
      \eqref{eq:new:hybridmixedprob:a} -- \eqref{eq:new:hybridmixedprob:c} is uniquely solvable and its solution components $(u_h,\bar p_T)$ have $u_h \in \Sh$ and $\bar p_T \in Q_h$ and also solve \eqref{eq:new:mixedprob:a} -- \eqref{eq:new:mixedprob:b}.
    \end{lemma}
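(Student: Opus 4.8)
The plan is to exploit the standard structure of hybridization: equation \eqref{eq:new:hybridmixedprob:c} re-imposes the normal continuity that was dropped when passing from $\Sh$ to $\Sh^-$, so that the recovered flux automatically lies in $\Sh$ and the multiplier $\bar p_F$ drops out of the remaining equations when they are tested against normal-continuous functions. Since \eqref{eq:new:hybridmixedprob:a}--\eqref{eq:new:hybridmixedprob:c} constitutes a square finite-dimensional linear system, it suffices to prove injectivity (homogeneous data forces the trivial solution); unique solvability for arbitrary data then follows, and the identification with \eqref{eq:new:mixedprob:a}--\eqref{eq:new:mixedprob:b} is read off along the way.

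First I would show that any solution has $u_h \in \Sh$. On each interior facet $F \in \mathcal{F}_h$ the normal trace of a broken Raviart--Thomas function is a polynomial of degree at most $k$, hence so is the jump $\jump{u_h \cdot n}$, which therefore lies in $\Lambda_h = \mathbb{P}^k(\mathcal{F}_h)$. Choosing $\bar q_F = \jump{u_h \cdot n}$ in \eqref{eq:new:hybridmixedprob:c} and recalling $c_h(u_h,\bar q_F) = -(\jump{u_h \cdot n},\bar q_F)_{\mathcal{F}_h}$ gives $\Vert \jump{u_h \cdot n} \Vert_{\mathcal{F}_h}^2 = 0$, so the normal component of $u_h$ is continuous across all interior facets and $u_h \in \Sh = \mathbb{R}\mathbb{T}^k(\Th)$. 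Consequently, for any normal-continuous test function $v_h \in \Sh \subset \Sh^-$ we have $\jump{v_h \cdot n} = 0$ on every $F \in \mathcal{F}_h$, whence $c_h(v_h,\bar p_F) = 0$. Restricting \eqref{eq:new:hybridmixedprob:a} to $v_h \in \Sh$ therefore yields exactly \eqref{eq:new:mixedprob:a}, while \eqref{eq:new:hybridmixedprob:b} coincides with \eqref{eq:new:mixedprob:b} since $Q_h$ is unchanged. Thus $(u_h,\bar p_T)$, with $\bar p_T = \bar p_h$, solves \eqref{eq:new:mixedprob:a}--\eqref{eq:new:mixedprob:b}.

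For unique solvability I would argue injectivity with homogeneous data $p_D = 0$, $f_h = 0$. By the two preceding steps $u_h \in \Sh$ and $(u_h,\bar p_T)$ solves the homogeneous version of \eqref{eq:new:mixedprob:a}--\eqref{eq:new:mixedprob:b}; by \cref{lemma:InfSupBh} together with kernel-coercivity of $a_h(\cdot,\cdot)$ on $\Shz$ this problem is well-posed, so $u_h = 0$ and $\bar p_T = 0$. It then remains to show $\bar p_F = 0$: with $u_h = 0$ and $\bar p_T = 0$, equation \eqref{eq:new:hybridmixedprob:a} reduces to $c_h(v_h,\bar p_F) = 0$ for all $v_h \in \Sh^-$, and I would conclude from the injectivity of the map $\bar p_F \mapsto c_h(\cdot,\bar p_F)$ on $\Lambda_h$.

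The main obstacle is precisely this last inf-sup/injectivity property, which I would establish facet-by-facet: given $\bar p_F \in \Lambda_h$, one constructs a broken Raviart--Thomas function $v_h \in \Sh^-$ whose normal trace equals $\bar p_F$ on each interior facet and vanishes on facets meeting $\partial \OmG$. This is possible because passing to $\Sh^-$ decouples the elements and allows element-local prescription of facet normal traces, so that $c_h(v_h,\bar p_F) = -\Vert \bar p_F \Vert_{\mathcal{F}_h}^2$, forcing $\bar p_F = 0$ whenever $c_h(\cdot,\bar p_F)$ vanishes. Injectivity of the square system then yields existence and uniqueness for arbitrary data, which completes the proof.
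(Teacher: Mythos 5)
Your proposal is correct and follows essentially the same route as the paper: test \eqref{eq:new:hybridmixedprob:c} with $\bar q_F = \jump{u_h\cdot n}\in\Lambda_h$ to recover normal continuity, observe that $c_h(v_h,\cdot)$ vanishes for $v_h\in\Sh$ so that restricting test functions recovers \eqref{eq:new:mixedprob:a}--\eqref{eq:new:mixedprob:b}, argue injectivity of the square homogeneous system, and kill $\bar p_F$ via a broken Raviart--Thomas function with prescribed facet normal traces. The only cosmetic difference is that you invoke the already-established well-posedness of \eqref{eq:new:mixedprob:a}--\eqref{eq:new:mixedprob:b} (\cref{lemma:InfSupBh}) to conclude $u_h=0$, $\bar p_T=0$, whereas the paper re-derives this directly by testing with $\bar q_T=\div u_h$, $v_h=u_h$ and then using the inf-sup condition; both rest on the same ingredients.
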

    \begin{proof}
      We prove that for $p_D=0$ and $f_h=0$ the only solution to \eqref{eq:new:hybridmixedprob:a} -- \eqref{eq:new:hybridmixedprob:c} is $(u_h, \bar p_T, \bar p_F) = 0$. The general case follows by linearity. With \eqref{eq:new:hybridmixedprob:c} we can set $\bar q_F = \jump{u_h} \cdot n \in \Lambda_h$ on $\mathcal{F}_h$ and obtain normal-continuity, i.e. $u_h \in \Sh$. With \eqref{eq:new:hybridmixedprob:b} we can set $\bar q_T = \div u_h \in Q_h$ and obtain $\div u_h = 0$ in $\OmT$. In \eqref{eq:new:hybridmixedprob:a} we can then set $v_h = u_h$ and then have $b_h(v_h,\bar p_T)=c_h(v_h,\bar p_F) = 0$ and it remains $a_h(u_h,u_h) = 0$ which implies $u_h = 0$. Now setting $v_h \in \Sh$ so that $\div v_h = \bar p_T$ in \eqref{eq:new:hybridmixedprob:a} we obtain $\bar p_T = 0$ and finally setting $v_h \in \Sh$ so that $\jump{v_h} = \bar p_F$\footnote{That $\div v_h = \bar p_T$ is indeed possibly for every $\bar p_T$ is a consequence of \cref{lemma:InfSupBh} while for $\jump{v_h} = \bar p_F$ we refer to the literature, cf. e.g. \cite{brezzi2012mixed}.} we conclude with $\bar p_F=0$. Now consider arbitrary r.h.s. for $p_D$ and $f_h$. Then the solution to \eqref{eq:new:hybridmixedprob:a} -- \eqref{eq:new:hybridmixedprob:c} has $u_h \in \Sh$ and by reducing the test functions in \eqref{eq:new:hybridmixedprob:a} from $\Sh^-$ to $\Sh$ we re-obtain the problem \eqref{eq:new:mixedprob:a} -- \eqref{eq:new:mixedprob:b}.
    \end{proof}

    \begin{remark}
      Hybridization is straight-forward for $\gamma_u = 0$, but can also be applied for the GP-stabilized method with $\gamma_u > 0$. To this end the Schur complements within the static condensation procedure are to be formed patch- instead of element-wise. The resulting linear system is still symmetric positive definite, but the coupling stencil of unknowns corresponding to facet functions on cut element patches will be (significantly) larger.
    \end{remark}

\subsection{External force on the flow}
The model problem \eqref{eq:mixedPoisson} can be considered as a model for a fluid flow, where we can consider an external force acting directly on the flow. This can be modeled by a non-trivial right hand side $g \in L^2(\Omega)$ in the first equation:
\[
u - \nabla p = g \quad \text{in } \Omega, \qquad  \div u = f \quad \text{in } \Omega, \qquad p = p_D \quad \text{on } \Gamma.
\] 
The changes to the analysis are minor and we will not discuss them in detail here. The main difference is that the right hand side $g$ will be included in the right hand side of the weak formulation \eqref{eq:weakForm:a} and unfitted mixed FEM \eqref{eq:new:mixedprob:a}. These now read
\begin{align}
  a(u,v) + b(v,p) &= (g,v)_{\Omega} + (v \cdot n,p_D)_{\Gamma} && \forall v \in \Sigma, \label{eq:weakForm:a-star} \tag{C-a*} \\
  a_h(u_h,v_h) + b_h(v_h,\bar p_h) &= (g,v_h)_{\Omega} + (v_h \cdot n,p_D)_{\Gamma} && \forall v_h \in \Sh, \label{eq:new:mixedprob:a-star} \tag{M-a*}
\end{align}
respectively. This problem does not introduce new inconsistencies or challenges. Unlike for $f$, there is no need to extend $g$ to $\OmT$. The consistency relation \eqref{eq:consistency} will still hold and the error analysis in Lemma \ref{lemma:EEuhPerp} remains unchanged by the additional force $g$. The further error analysis in Section \ref{sec:analysis1} also remains valid with this additional force term.

\section{Numerical examples} \label{sec:numerics}
\noindent In this section, we will apply the previously introduced methods to an example problem.
The experiments are performed with \texttt{ngsxfem} \cite{LHPvW21}, an extension to the finite element library \texttt{ngsolve} \cite{Sch97,Sch14} and are accessible and reproducible \cite{LYR2CG_2023}, see also the \hyperlink{sec:dataavail}{section on data availability} below.
To solve the arising linear system, we use sparse direct solvers.

\noindent We consider a ring geometry inside the domain $[-1,1]^2$ with inner radius $R_1 = \sfrac14$ and outer radius $R_2 = \sfrac34$ and define $r(x) = \sqrt{x_1^2+x_2^2}$, $\bar R = \sfrac12 (R_1+R_2)$ and $\Delta R = R_2-R_1$.
The geometry is described by the following signed distance function: 
\begin{equation}
    \phi(x) = |r(x) - \bar R| - \frac{\Delta R}{2}.
\end{equation}
We consider the (artificial) exact solution $p(x) = \sin(x_1)$
and prescribe the corresponding right hand side $f = -\Delta p$ and $p_D=p|_\Gamma$ (respectively $g_N = \nabla p \cdot n$ in the Neumann case). For the evaluation of the error on $\OmT$ we use the closed form presentation, but interpret $p$ as a function on $\OmT$, $p: \OmT \to \mathbb{R}$ without any further reflection in the notation. The same holds for the derived functions $u = \nabla p$ and $f = - \div u = - \Delta p$. If not addressed otherwise we define $f_h$ through \eqref{eq:fh} with $f_h \in Q_h^{k_f} = \mathbb{P}^{k_f}(\Th)$ and $k_f = k$.

\begin{figure}
    \centering
    \includegraphics[width=0.45\textwidth]{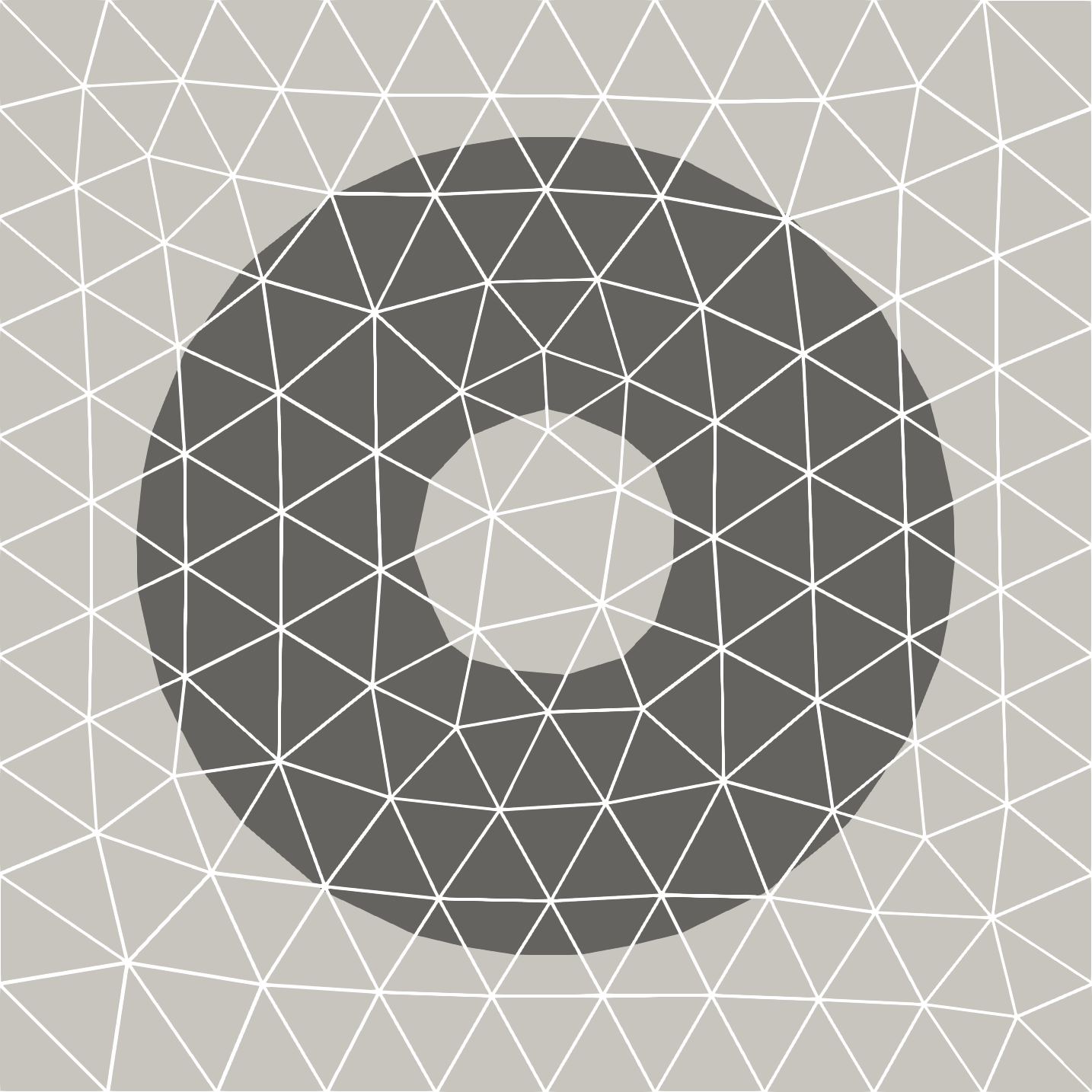}
    \includegraphics[width=0.45\textwidth]{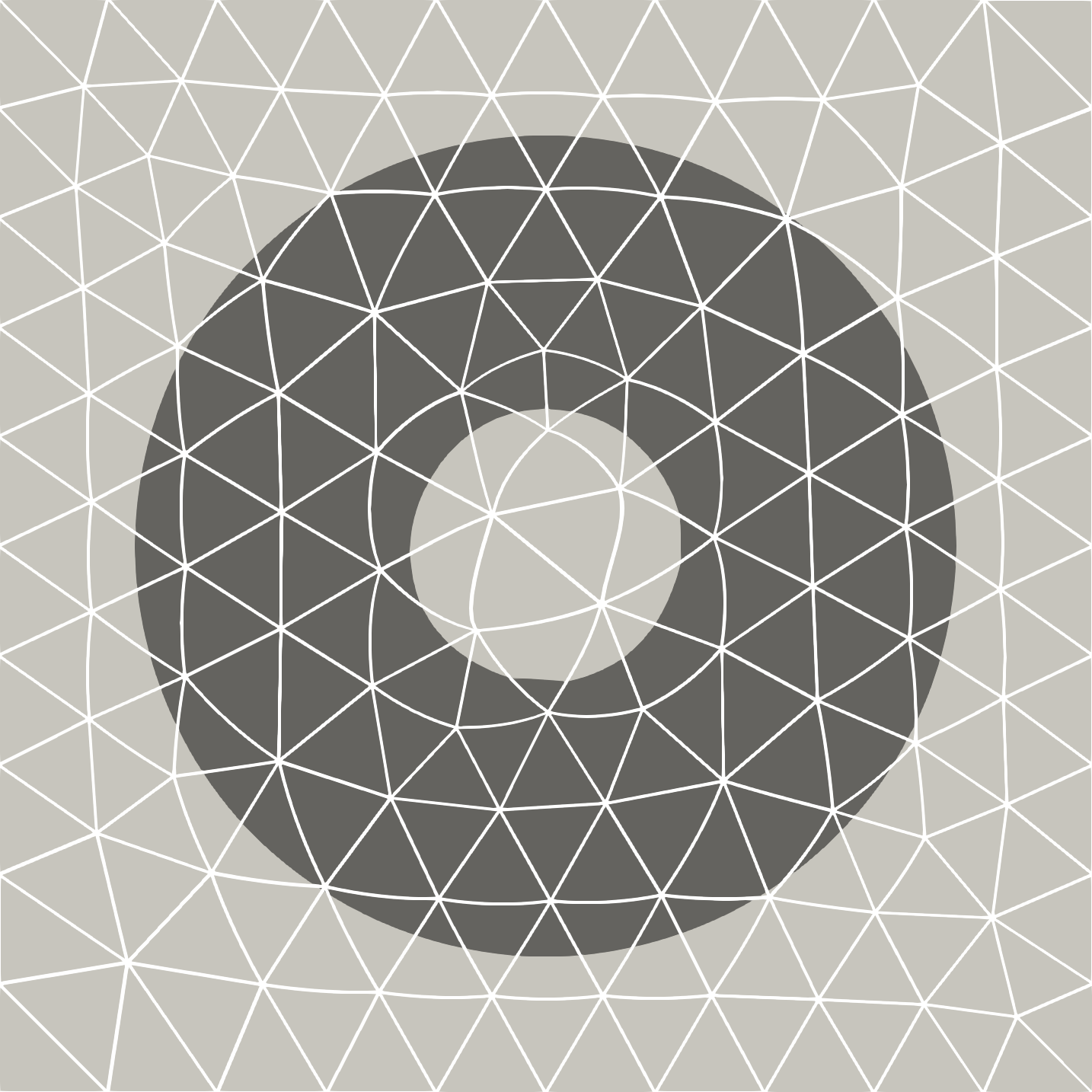} 
  \caption{Background mesh (refinement level $0$) and cut domain for the considered geometry before (left) and after (right) parametric mesh deformation (for a mesh deformation of third order).} \label{fig:meshL0}
\end{figure}

In this section we will discuss numerical studies for this problem, carried out on a sequence of suitably deformed meshes $\Th$ that are subsequently obtained from successive uniform refinements from the same background mesh with approximate mesh size $0.1$, cf. \cref{fig:meshL0}. After briefly recalling how such geometries can be handled, we will apply the unfitted mixed finite element method \eqref{eq:new:mixedprob:a}--\eqref{eq:new:mixedprob:b} and discuss the influence of the stabilization parameter $\gamma_u$ on the convergence of the method. We then discuss the convergence of the post-processed solution. Afterwards, we deviate from the cases $k_f = k$ and $p_D=p|_\Gamma$ to discuss the influence of the approximation of $f$ and to show how the method can be extended to Neumann-type boundary conditions. In the final two subsections we turn our attention to different GP stabilizations, discussing their impact on the sparsity pattern of the discretization matrix. Additionally, we compare unstabilized and stabilized methods in view of the condition number of the system matrix for a specific example.

\subsection{Geometry handling}  \label{sec:geom}
In the analysis in this work we do not consider the influence of the geometry handling on the overall error. However, an accurate handling of the unfitted geometry is crucial for the overall accuracy of the method, especially for higher order accuracy. Here, we use the isoparametric unfitted finite element method, cf. \cite{L_CMAME_2016}, where a piecewise linear approximation of the level set function $\phi(x)$ is used as a starting point. The piecewise linear approximation allows for robust numerical integration on unfitted geometries, but is only second order accurate. A mesh deformation is then computed and performed to map certain near zero iso-lines (or iso-surfaces) of the piecewise linear level set function towards corresponding iso-lines (or iso-surfaces) of the exact (or sufficiently accurate approximation of the) level set function. For details we refer to the literature, see \cite{L_CMAME_2016,LR_IMAJNA_2018}. As a consequence of the mesh deformation $\Theta_h: \widetilde \Omega \to \widetilde \Omega $, which itself is a finite element function $\Theta_h \in [\mathbb{P}^{k+1}(\widetilde{\mathcal{T}}_h) \cap H^1(\widetilde{\Omega})]^d$ we will have that $\Th$ contains curved elements and will adjust the finite element spaces accordingly. $\Theta_h^{-1}(\Th)$ is the corresponding mesh consisting of straight simplicial elements on which the finite element spaces are defined as usual, which we denote as $\hat{\Sigma}_h$, $\hat{Q}_h$, $\hat{F}_h$ and $\hat{\Lambda}_h$, respectively.
For $\Sh$ we will apply the Piola transformation, while functions in $Q_h$, $F_h$ and $\Lambda_h$ are directly mapped, i.e.
\begin{align*}
    \Sh &= \{ v = (\det D\Theta_h)^{-1} D \Theta_h \cdot \hat v ) \circ  \Theta_h^{-1}, \hat{v} \in \hat{\Sigma}_h \}, \\
    Q_h &= \hat Q_h \circ \Theta_h^{-1},
    F_h = \hat F_h \circ \Theta_h^{-1}, 
    \Lambda_h = \hat \Lambda_h \circ \Theta_h^{-1}.
\end{align*}
The Piola mapping for $\Sh$ ensures that the bilinear forms $b_h(\cdot,\cdot)$ and $c_h(\cdot,\cdot)$ (on the corresponding discrete spaces) are geometry-independent, i.e. they do not depend on $\Theta_h$, cf. \cite{monk03:_fem_maxwell}. This implies especially that there still holds normal-continuity and $|\det D \Theta_h| \div u_h \circ \Theta_h = \Pi ( |\det D \Theta_h| \cdot f_h \circ \Theta_h )$ pointwise where $\Pi$ is the $L^2$-projection onto $\hat Q_h$. Hence, $\div u_h$ is the suitably scaled $L^2$-projection of $f_h \circ \Theta_h$ onto $\hat Q_h$ which preserves the crucial property $\div u_h = 0$ pointwise for $f_h = 0$.

\subsection{Convergence of $u-u_h$ of the unfitted mixed FE methods}

\renewcommand{\g}{1}
\newcommand{\lw}{1.2pt}
\newcommand{\lsty}{dashed}

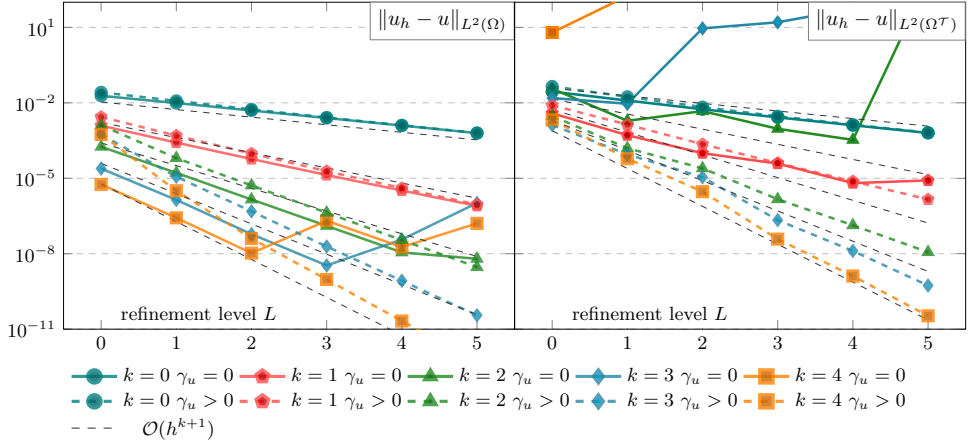
\begin{figure}[!htbp]
  \begin{center}\hspace*{-0.3cm}
      \begin{tikzpicture}[scale=0.8]
          \begin{groupplot}[%
              group style={%
              group size=2 by 1,
              horizontal sep=0cm,
              vertical sep=0.0cm,
              },
          ymajorgrids=true,
          grid style=dashed,
          ymin=1e-11, ymax=1e2,
          ]       
          \nextgroupplot[width=9cm,height=7cm,domain=0:6,xmode=linear,ymode=log, 
                         xlabel={refinement level $L$},
                         x label style={at={(axis description cs:0.3,0.09)},anchor=north,draw=none,fill=white},
                         title={$\Vert u_h - u \Vert_{L^2(\Omega)}$}, 
                         title style={at={(axis description cs:1,0.9625)},anchor=north east,draw=gray,fill=white},
                         ylabel={}, 
                         cycle list name=unfmixed1, 
                         legend style={legend columns=5, draw=none,nodes={scale=.8}}, 
                         legend to name=named,xtick={0,1,2,3,4,5,6},
          ]
          \foreach \p in {0,1,2,3,4}{
              \addplot+[discard if not={k}{\p},discard if not={gammastab}{0},discard if not={postprocessver}{0}, line width=\lw,opacity=.8] table [x=L, y=ul2error, col sep=comma] {unfmixed.csv};

          }
          \foreach \p in {0,1,2,3,4}{
              \addplot+[discard if not={k}{\p},discard if not={gammastab}{\g},discard if not={postprocessver}{0}, line width=\lw,opacity=.8,\lsty] table [x=L, y=ul2error, col sep=comma] {unfmixed.csv};
          }
          \foreach \p in {0,1,2,3,4}{
               \addplot[black!80!white, dashed, domain=0:5] {(0.07*(1/2^(\p+1))^(x+2.7))};
          }
          \legend{$k=0\ \gamma_u=0$,$k=1\ \gamma_u=0$,$k=2\ \gamma_u=0$,$k=3\ \gamma_u=0$,$k=4\ \gamma_u=0$,$k=0\ \gamma_u>0$,$k=1\ \gamma_u>0$,$k=2\ \gamma_u>0$,$k=3\ \gamma_u>0$,$k=4\ \gamma_u>0$,$\mathcal{O}(h^{k+1})$}
          \nextgroupplot[width=9cm,height=7cm,domain=0:5,xmode=linear,ymode=log, 
                         x label style={at={(axis description cs:0.3,0.09)},anchor=north,draw=none,fill=white},
                         title={$\Vert u_h - u \Vert_{L^2(\OmT)}$}, 
                         title style={at={(axis description cs:1,0.9625)},anchor=north east,draw=gray,fill=white},
                         xlabel={refinement level $L$}, 
                         ylabel={}, 
                         cycle list name=unfmixed1,
                         minor tick style ={white},
                         xtick={0,1,2,3,4,5,6},
                         yticklabels={,,}
              ]
              
              \foreach \p in {0,1,2,3,4}{
                  \addplot+[discard if not={k}{\p},discard if not={gammastab}{0},discard if not={postprocessver}{2}, line width=\lw] table [x=L, y=ul2error_bar, col sep=comma] {unfmixed.csv};
              }
              \foreach \p in {0,1,2,3,4}{
                  \addplot+[discard if not={k}{\p},discard if not={gammastab}{\g},discard if not={postprocessver}{2}, line width=\lw,opacity=.8,\lsty] table [x=L, y=ul2error_bar, col sep=comma] {unfmixed.csv};
              }
              \foreach \p in {0,1,2,3,4}{
                \addplot[black!80!white, dashed, domain=0:5] {(0.1*(1/2^(\p+1))^(x+1.4))};
           }
              
          \end{groupplot}
      \end{tikzpicture}
      \pgfplotslegendfromname{named}
  \end{center}\vspace*{-0.2cm}
  \caption{Numerical results for $\Vert u_h - u \Vert_{L^2}$ on $\dom$ and $\OmT$ with $\gamma_u = 0$ and $\gamma_u=1$ for polynomial degrees $k = 0,1,\dots,4$.} \label{fig:uConvRates}
\end{figure}

In this section, we investigate the approximation error of $u$ in the $L^2$-norm on the domains $\dom$ and $\OmT$. In both cases, we compare the case where $\gamma_u = 0$, i.e. without GP stabilization, and the case $\gamma_u = 1$, i.e. with GP stabilization.
The results are presented in \cref{fig:uConvRates}.
On $\dom$, we observe the expected convergence rates of order $k+1$ in both cases, with and without ghost penalty. The absolute error is slightly higher when $\gamma_u > 0$. However, for high accuracy computations we observe that the missing control on the condition number in the case $\gamma_u=0$ can lead to a pollution of the numerical results by accumulated round-offs errors that yield suboptimal errors so that at around $\approx 10^{-7}$ the error does not decrease anymore.
On $\OmT$, we observe that GP stabilization $\gamma_u > 0$ is necessary to obtain the expected convergence rates, at least for $k \geq 2$. With $\gamma_u > 0$ the expected convergence rates are observed. For $\gamma_u = 0$ the results are useless, unless $k \leq 1$, but even for $k=1$ the convergence rate seems to deteriorate on finer meshes. Overall these results underline the necessity for GP stabilization if accuracy on $\OmT$ is needed or computer arithmetics are not sufficient to handle the ill-conditioning of the linear system.
Finally, Fig. \ref{fig:numex:div_ph} shows that the divergence of the approximated flux variable $u_h$ indeed approximates the right hand side $-f$ in the $L^2$-norm. 
Note that with the pointwise property $\div u_h = -f_h$ this is merely a measure for the difference between $f^{\mathcal{E}}$ and $f_h$ in the $L^2(\Omega)$-norm, cf. \cref{ass:fh} and \cref{lem:fh:stabl2}.
Furthermore, we observe that the approximation of $p$ without post-processing converges quasi-optimally on $\Omi$, but not on $\Omega$. This shows the necessity of the post-processing step to obtain optimal convergence rates on $\Omega$, which we investigate further in the following section.

\begin{remark}[Hybridization]
  The results that have been obtained for $\gamma_u = 0$ could be reproduced with the hybridized version of the method. Only in some cases we observed consequences of the ill-conditioning of the $A$-block that had an impact when forming the Schur complement during static condensation which seem to have a smaller effect for the original -- also ill-conditioned -- linear system. To remedy the conditioning issues a small amount of regularization of the form $(\varepsilon u_h,v_h)_{\OmG}$ with tiny $\varepsilon > 0$, e.g. $\varepsilon=10^{-10}$ resolved this issue.
\end{remark}

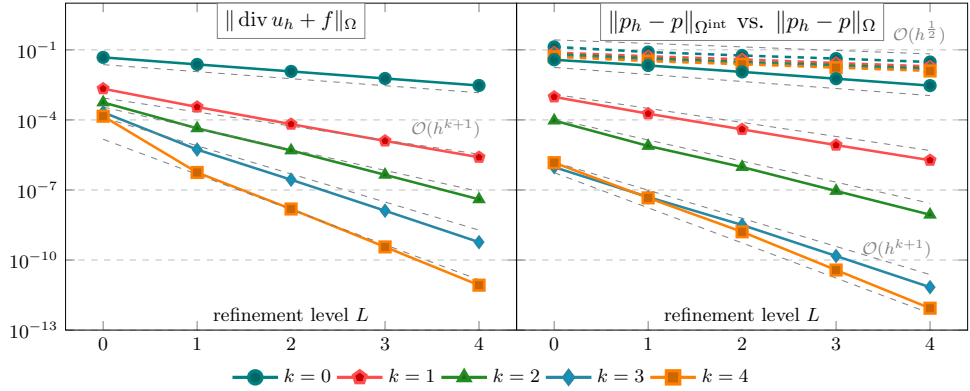
\begin{figure}[!htbp]
  \begin{center}\hspace*{-0.3cm}
      \begin{tikzpicture}[scale=0.8]%
          \begin{groupplot}[%
              group style={%
              group size=2 by 1,
              horizontal sep=0cm,
              vertical sep=0.1cm,
              },
          ymajorgrids=true,
          grid style=dashed,
          ymin=1e-13, ymax=1e1,
          ]       
          \nextgroupplot[width=9cm,height=7cm,domain=0:4,xmode=linear,ymode=log, 
                         x label style={at={(axis description cs:0.5,0.09)},anchor=north,draw=none,fill=white},
                         title style={at={(axis description cs:0.5,0.9625)},anchor=north,draw=gray,fill=white},
                         xlabel={refinement level $L$}, 
                         title={$\Vert \div u_h + f \Vert_{\Omega}$}, 
                         ylabel={}, 
                         cycle list name=unfmixed1, 
              xtick={0,1,2,3,4}, minor tick style = {white}, legend style={legend columns=5, draw=none,nodes={scale=.8}}, 
              legend to name=named
          ]
          \foreach \p in {0,1,2,3,4}{
            \addplot+[discard if not={k}{\p},discard if not={gammastab}{0},discard if not={postprocessver}{0}, line width=\lw] table [x=L, y=udiverror, col sep=comma] {unfmixed_revisited.csv};
          }

          \addplot[gray, dashed, domain=0:4] {(0.025*(1/2^(1))^(x+0.1))};
          \addplot[gray, dashed, domain=0:4] {(0.001*(1/2^(2))^(x+0.1))};
          \addplot[gray, dashed, domain=0:4] {(0.001*(1/2^(3))^(x+0.5))};
          \addplot[gray, dashed, domain=0:4] {(0.001*(1/2^(4))^(x+0.75))};
          \addplot[gray, dashed, domain=0:4] {(0.0001*(1/2^(5))^(x+0.55))};

          \node [draw=none] at (axis description cs:0.85,0.615) {\color{gray}\footnotesize $\!\!\mathcal{O}(h^{k+1})$};           

          \legend{$k=0$, $k=1$, $k=2$, $k=3$, $k=4$}

         \nextgroupplot[width=9cm,height=7cm,domain=0:4,xmode=linear,ymode=log, 
                        x label style={at={(axis description cs:0.5,0.09)},anchor=north,draw=none,fill=white},
                        title style={at={(axis description cs:0.5,0.9625)},anchor=north,draw=gray,fill=white},
                        xlabel={refinement level $L$}, 
                        title={\large $\Vert p_h - p \Vert_{\Omi}$ vs. $\Vert p_h - p \Vert_{\Omega}$}, 
                        ylabel={}, 
                        yticklabels={,,},
                        cycle list name=unfmixed1, 
              legend pos=north east,xtick={0,1,2,3,4}, minor tick style = {white}, legend style={draw=gray, fill=white}
          ]
      
          \foreach \p in {0,1,2,3,4}{
            \addplot+[discard if not={k}{\p},discard if not={gammastab}{0},discard if not={postprocessver}{0}, line width=\lw,dashed] table [x=L, y=pl2error, col sep=comma] {unfmixed_revisited.csv};
          }
          \foreach \p in {0,1,2,3,4}{
            \addplot+[discard if not={k}{\p},discard if not={gammastab}{0},discard if not={postprocessver}{0}, line width=\lw] table [x=L, y=p_inner_l2error, col sep=comma] {unfmixed_revisited.csv};
          }
          
          \addplot[gray, dashed, domain=0:4] {(0.45*(1/2^(1/2))^(x+1.5))};
          \addplot[gray, dashed, domain=0:4] {(0.05*(1/2^(1))^(x+1.5))};
          \addplot[gray, dashed, domain=0:4] {(0.01*(1/2^(2))^(x+1.5))};
          \addplot[gray, dashed, domain=0:4] {(0.0025*(1/2^(3))^(x+1.5))};
          \addplot[gray, dashed, domain=0:4] {(0.0001*(1/2^(4))^(x+1.5))};
          \addplot[gray, dashed, domain=0:4] {(0.0001*(1/2^(5))^(x+1.5))};
          
          \node [draw=none] at (axis description cs:0.9,0.91) {\color{gray}\footnotesize $\!\!\mathcal{O}(h^{\frac12})$};
          \node [draw=none] at (axis description cs:0.85,0.25) {\color{gray}\footnotesize $\!\!\mathcal{O}(h^{k+1})$};           

          \end{groupplot}
      \end{tikzpicture}
      \pgfplotslegendfromname{named}
  \end{center}\vspace*{-0.1cm}
  \caption{$L^2$-convergence of $\div u_h$ towards $-f$ (left) and $L^2$-error of $p_h - p$ on $\Omega$ (dashed) and $\Omi$ (solid) without post-processing (right) for polynomial degrees $k = 0,1,\dots,4$.}
  \label{fig:numex:div_ph}
\end{figure}

\subsection{Post-processings} 
\label{sec:numex:post-processings}

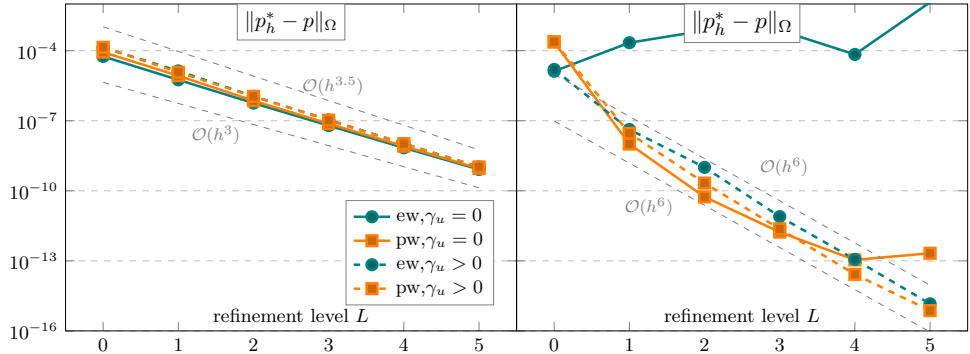
\begin{figure}[!htbp]
    \begin{center}\hspace*{-0.3cm}
        \begin{tikzpicture}[scale=0.8]%
            \begin{groupplot}[%
                group style={%
                group size=2 by 1,
                horizontal sep=0cm,
                vertical sep=0.1cm,
                },
            ymajorgrids=true,
            grid style=dashed,
            ymin=1e-16, ymax=1e-2,
            ]       
            \nextgroupplot[width=9cm,height=7cm,domain=0:5,xmode=linear,ymode=log, 
                           x label style={at={(axis description cs:0.5,0.09)},anchor=north,draw=none,fill=white},
                           title style={at={(axis description cs:0.5,0.9625)},anchor=north,draw=gray,fill=white},
                           xlabel={refinement level $L$}, 
                           title={$\Vert p_h^* - p \Vert_{\Omega}$}, 
                           ylabel={}, 
                           cycle list name=unfmixed2, 
                xtick={0,1,2,3,4,5}, minor tick style = {white}, legend style={at={(axis description cs:0.95,0.4)},draw=gray,fill=white}
            ]

                \addplot+[discard if not={k}{1},discard if not={gammastab}{0},discard if not={postprocessver}{0}, line width=\lw] table [x=L, y=psl2error, col sep=comma] {unfmixed.csv};
                \addplot+[discard if not={k}{1},discard if not={gammastab}{0},discard if not={postprocessver}{1}, line width=\lw] table [x=L, y=psl2error, col sep=comma] {unfmixed.csv};
                \addplot+[discard if not={k}{1},discard if not={gammastab}{\g},discard if not={postprocessver}{0}, line width=\lw,\lsty] table [x=L, y=psl2error, col sep=comma] {unfmixed.csv};
                \addplot+[discard if not={k}{1},discard if not={gammastab}{\g},discard if not={postprocessver}{1}, line width=\lw,\lsty] table [x=L, y=psl2error, col sep=comma] {unfmixed.csv};

                \addplot[gray, dashed, domain=0:5] {(0.0001*(1/2^(1+2))^(x+1.5))};
                \addplot[gray, dashed, domain=0:5] {(0.04*(1/2^(1+2.5))^(x+1.5))};

                \node [draw=none] at (axis description cs:0.6,0.75) {\color{gray}\footnotesize $\!\!\mathcal{O}(h^{3.5})$};           
                \node [draw=none] at (axis description cs:0.34,0.6) {\color{gray}\footnotesize $\!\!\mathcal{O}(h^{3})$};           

            \legend{$\text{ew} {, }\gamma_u = 0 $,$\text{pw} {,} \gamma_u = 0$,$\text{ew} {, } \gamma_u > 0 $,$\text{pw} {,}\gamma_u>0 $}

           \nextgroupplot[width=9cm,height=7cm,domain=0:5,xmode=linear,ymode=log, 
                          x label style={at={(axis description cs:0.5,0.09)},anchor=north,draw=none,fill=white},
                          title style={at={(axis description cs:0.5,0.9625)},anchor=north,draw=gray,fill=white},
                          xlabel={refinement level $L$}, 
                          title={\large $\Vert p_h^* - p \Vert_{\Omega}$}, 
                          ylabel={}, 
                          yticklabels={,,},
                          cycle list name=unfmixed2, 
                legend pos=north east,xtick={0,1,2,3,4,5}, minor tick style = {white}, legend style={draw=gray, fill=white}
            ]
        
                \addplot+[discard if not={k}{4},discard if not={gammastab}{0},discard if not={postprocessver}{0}, line width=\lw] table [x=L, y=psl2error, col sep=comma] {unfmixed.csv};
                \addplot+[discard if not={k}{4},discard if not={gammastab}{0},discard if not={postprocessver}{1}, line width=\lw] table [x=L, y=psl2error, col sep=comma] {unfmixed.csv};
                \addplot+[discard if not={k}{4},discard if not={gammastab}{\g},discard if not={postprocessver}{0}, line width=\lw,\lsty] table [x=L, y=psl2error, col sep=comma] {unfmixed.csv};
                \addplot+[discard if not={k}{4},discard if not={gammastab}{\g},discard if not={postprocessver}{1}, line width=\lw,\lsty] table [x=L, y=psl2error, col sep=comma] {unfmixed.csv};

                \addplot[gray, dashed, domain=0:5] {(0.005*(1/2^(4+2))^(x+1.5))};
                \addplot[gray, dashed, domain=0:5] {(0.00005*(1/2^(4+2))^(x+1.5))};

                \node [draw=none] at (axis description cs:0.6,0.5) {\color{gray}\footnotesize $\!\!\mathcal{O}(h^{6})$};           
                \node [draw=none] at (axis description cs:0.3,0.38) {\color{gray}\footnotesize $\!\!\mathcal{O}(h^{6})$};           

            \end{groupplot}
        \end{tikzpicture}
    \end{center}\vspace*{-0.1cm}
    \caption{Post-processing results for $k=1$ (left) and $k=4$ (right).}
    \label{fig:numex:phs}
\end{figure}

To compare the two post-processing versions introduced in \cref{sec:elpost-process} and \cref{sec:patchpost-process} numerically, we consider numerical results for two polynomial degrees, $k =1$ and $k=4$. Further experiments for other polynomial degrees displayed a similar behavior and are left out. Figure \ref{fig:numex:phs} displays the discretization error of $p$ in the $L^2$-norm on $\dom$ for the elementwise and the patchwise post-processing, with the choices $\gamma_u = 0$ and $\gamma_u = 1$ as above. For the lower order case $k = 1$, both methods exhibit the expected convergence of order (at least) $k+2$, irrespective of whether $\gamma_u = 0$ or $\gamma_u > 0$. 
In the higher order case $k = 4$, both methods perform comparably and converge with the expected order $k +2$ for $\gamma_u > 0$. The patchwise post-processing seems to be slightly more accurate than the elementwise post-processing. In contrast, when no GP is applied, the order of convergence degrades for the elementwise post-processing but remains unaffected for the patchwise post-processing until the ill-conditioning of the unstabilized discretization shows a saturation around a precision of $\approx 10^{-13}$. These results emphasize the requirement of having $\gamma_u > 0$ for the elementwise post-processing. Overall, we observe that the patchwise post-processing is competitive in terms of accuracy and can be applied without requiring GP stabilization.

\subsection{Approximation of $f$} \label{sec:numex:f}
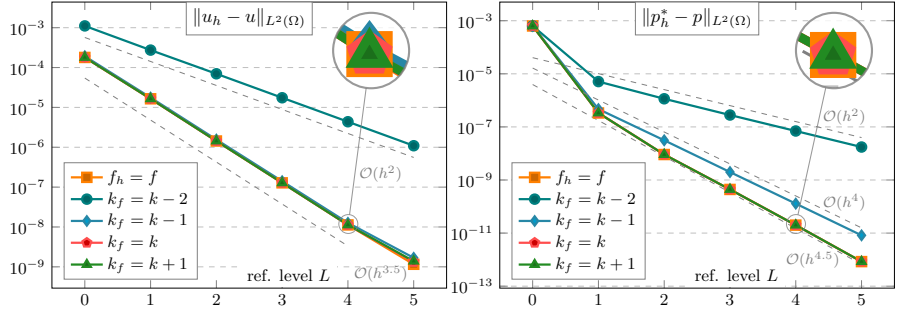
\begin{figure}[!htbp]
  \begin{center}
      \begin{tikzpicture}[scale=0.7,spy using outlines={circle, magnification=4, size=1cm, connect spies}]
          \begin{groupplot}[%
              group style={%
              group size=2 by 1,
              horizontal sep=1cm,
              vertical sep=0.1cm,              
              },
          ymajorgrids=true,
          grid style=dashed,
          ]       
          \nextgroupplot[width=9cm,height=7cm,domain=0:5,xmode=linear,ymode=log, 
                         x label style={at={(axis description cs:0.6,0.09)},anchor=north,draw=none,fill=white},
                         title style={at={(axis description cs:0.5,0.9625)},anchor=north,draw=gray,fill=white},
                         xlabel={ref. level $L$}, 
                         title={$\Vert u_h - u \Vert_{L^2(\Omega)}$}, 
                         ylabel={}, 
                         cycle list name=unfmixed3,                          
              legend pos=south west,xtick={0,1,2,3,4,5},,minor tick style ={white},legend style={draw=gray},
              legend cell align={left}
          ]
          \foreach \f in {0,1,2,3,4}{
              \addplot+[discard if not={k}{2},discard if not={gammastab}{0},discard if not={postprocessver}{0}, discard if not={F}{\f}, line width=\lw] table [x=L, y=ul2error, col sep=comma] {fapprox.csv};
          }
          \addplot[gray, dashed, domain=0:5] {0.003*(1/2^(2))^(x+1.2)};
          \node [draw=none] at (axis description cs:0.84,0.4) {\color{gray}\footnotesize $\!\!\mathcal{O}(h^{2})$};           
          \addplot[gray, dashed, domain=0:4] {0.001*(1/2^(3.5))^(x+1.2)};
\node [draw=none] at (axis description cs:0.84,0.05) {\color{gray}\footnotesize $\!\!\mathcal{O}(h^{3.5})$};           
\legend{$f_h=f$,$k_f=k-2$,$k_f=k-1$,$k_f=k$,$k_f=k+1$}
          \nextgroupplot[width=9cm,height=7cm,domain=0:5,xmode=linear,ymode=log, 
                         x label style={at={(axis description cs:0.6,0.09)},anchor=north,draw=none,fill=white},
                         title style={at={(axis description cs:0.5,0.9625)},anchor=north,draw=gray,fill=white},
                         xlabel={ref. level $L$}, 
                         title={$\Vert p_h^\ast - p \Vert_{L^2(\Omega)}$}, 
                         ylabel={}, 
                         cycle list name=unfmixed3, 
          legend pos=south west,xtick={0,1,2,3,4,5},minor tick style ={white},legend style={draw=gray},
          legend cell align={left}
              ]
                  \foreach \f in {0,1,2,3,4}{
                  \addplot+[discard if not={k}{2},discard if not={gammastab}{0},discard if not={postprocessver}{2}, discard if not={F}{\f},line width=\lw] table [x=L, y=psl2error, col sep=comma] {fapprox.csv};
              }
              \addplot[gray, dashed, domain=0:5] {0.0001*(1/2^(2))^(x+0.65)};
              \node [draw=none] at (axis description cs:0.88,0.6) {\color{gray}\footnotesize $\!\!\mathcal{O}(h^{2})$};           
              \addplot[gray, dashed, domain=0:5] {0.0001*(1/2^(2+2))^(x+0.65)};
              \node [draw=none] at (axis description cs:0.88,0.3) {\color{gray}\footnotesize $\!\!\mathcal{O}(h^{4})$};          
              \addplot[gray, dashed, domain=0:5] {0.00003*(1/2^(2+2.5))^(x+0.65)};
              \node [draw=none] at (axis description cs:0.8,0.1) {\color{gray}\footnotesize $\!\!\mathcal{O}(h^{4.5})$};           
              \legend{$f_h=f$,$k_f=k-2$,$k_f=k-1$,$k_f=k$,$k_f=k+1$}
          \end{groupplot}
          \spy[gray,size=1cm,opacity=.8] on (3.9,0.85) in node [fill=white] at (6,4.5);
          \spy[gray,size=1cm,opacity=.8] on (9.79,0.85) in node [fill=white] at (14.7,4.5);
      \end{tikzpicture}
  \end{center}
  \caption{Error dependence on the approximation of $f_h$ for $k = 2$.}
  \label{fig:numex:fhs}
\end{figure}
Next, we want to investigate the dependency of the discretization error on the approximation of $f$. We restrict to the method with $\gamma_u = 0$ and the patchwise post-processing and set the polynomial order to $k=2$. For the approximation of $f$ we consider the following cases: Either we use \eqref{eq:fh} with $f_h \in \mathbb{P}^{k_f}(\Th)$ and $k_f \in \{k-2,k-1,k,k+1 \}$ or directly choose $f_h = f = \div u$ (on $\OmT$). In \cref{fig:numex:fhs} the corresponding error behaviour for $u-u_h$ and $p - p_h^\ast$ is displayed. We observe that for $\ell \in \{k,k+1\}$ and $f_h = f$ the errors are almost identical and clearly at least as accurate as predicted by our theory (with $\mathcal{O}(h^{k+1.5})$ and $\mathcal{O}(h^{k+2.5})$, respectively, even half an order better). For $k_f = k-2$ the convergence rates drop significantly to $\mathcal{O}(h^{2})$ for both error measures. For $k_f = k-1$ the error in $u$ is essentially unaffected and keeps the convergence rates while for the error in $p$ the convergence rate is precisely four.
Our theory only predicts optimal convergence rates for the case $k_f \geq k$ and for this case the numerical results are in accordance with our expectations. However, as discussed in \cref{rem:fass,rem:freg} it is reasonable to assume that the assumption \cref{ass:fh}, i.e. the dependency on the approximation of $f$, can be slightly weakened which is also suggested by the numerical results for $k_f = k-1$.

\subsection{Neumann boundary conditions}
\noindent With $\gamma_u = 1$ and $\gamma_N^{\text{vol}}(k) = \gamma_N^{\text{facet}}(k) = 0.01$ we consider the discretization in  \eqref{eq:neumann:a} -- \eqref{eq:neumann:c} for the same example as before. We consider only the GP stabilized version here as the formulation in \eqref{eq:neumann:a} -- \eqref{eq:neumann:c} involves not only $u_h$ on $\Omega$ but also $u_h \cdot n$ on the boundary $\Gamma$. To control $u_h \cdot n$ (in the trace sense) with trace inverse inequalities it is hence reasonable to provide control in $L^2(\OmT)$ and not only in $L^2(\Omega)$. Further, we consider the patchwise post-processing only as it does not rely on Dirichlet boundary data. In \cref{fig:uConvRatesNeumann} we observe the corresponding convergence behaviour for $k \in \{0,1,2,3\}$. Optimal convergence rates as in the Dirichlet case can be observed indicating that \eqref{eq:neumann:a} -- \eqref{eq:neumann:c} is indeed a proper discretization for the case of Neumann boundary conditions.
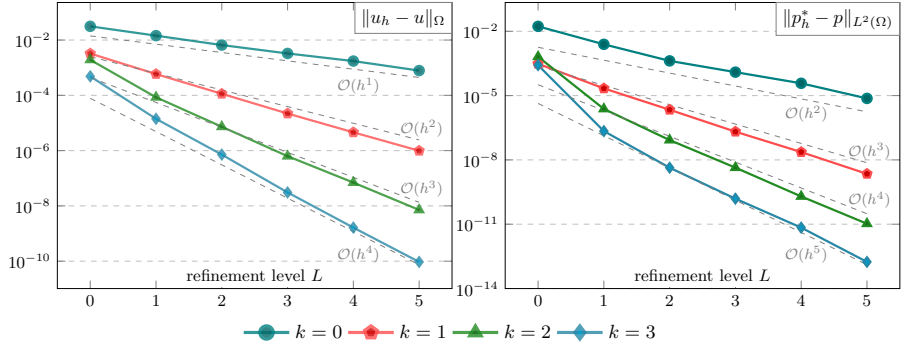
\begin{figure}[!htbp]
  \begin{center}
      \begin{tikzpicture}[scale=0.7]
          \begin{groupplot}[%
              group style={%
              group size=2 by 1,
              horizontal sep=1cm,
              vertical sep=0.1cm,
              },
          ymajorgrids=true,
          grid style=dashed,
          ]       
          \nextgroupplot[width=9cm,height=7cm,domain=0:5,xmode=linear,ymode=log, 
                         xlabel={refinement level $L$},
                         x label style={at={(axis description cs:0.5,0.09)},anchor=north,draw=none,fill=white},
                         title={$\Vert u_h - u \Vert_{\dom}$}, 
                         title style={at={(axis description cs:1,0.9625)},anchor=north east,draw=gray,fill=white},
                         ylabel={}, 
                         cycle list name=unfmixed1, 
                         legend style={legend columns=5, draw=none,nodes={scale=.8}}, 
                         legend to name=named,xtick={0,1,2,3,4,5},
          ]
          \foreach \p in {0,1,2,3,4}{
              \addplot+[discard if not={k}{\p},discard if not={gammastab}{1},discard if not={postprocessver}{0}, line width=\lw,opacity=.8] table [x=L, y=ul2error, col sep=comma] {unfmixed_neumann.csv};
          }
          \foreach \p in {0,1,2,3}{
            \addplot[gray, dashed, domain=0:5] {(0.08*(1/2^(\p+1))^(x+2.5))};
          }          
          \node [draw=none] at (axis description cs:0.77,0.71) {\color{gray}\footnotesize $\!\!\mathcal{O}(h^{1})$};           
          \node [draw=none] at (axis description cs:0.93,0.56) {\color{gray}\footnotesize $\!\!\mathcal{O}(h^{2})$};           
          \node [draw=none] at (axis description cs:0.93,0.35) {\color{gray}\footnotesize $\!\!\mathcal{O}(h^{3})$};           
          \node [draw=none] at (axis description cs:0.77,0.12) {\color{gray}\footnotesize $\!\!\mathcal{O}(h^{4})$};           
      
          \legend{$k=0$,$k=1$,$k=2$,$k=3$}
          \nextgroupplot[width=9cm,height=7cm,domain=0:5,xmode=linear,ymode=log, 
                         x label style={at={(axis description cs:0.5,0.09)},anchor=north,draw=none,fill=white},
                         title={$\Vert p_h^*-p \Vert_{L^2(\dom)}$}, 
                         title style={at={(axis description cs:1,0.9625)},anchor=north east,draw=gray,fill=white},
                         xlabel={refinement level $L$}, 
                         ylabel={}, 
                         cycle list name=unfmixed1,
                         minor tick style ={white},
                         xtick={0,1,2,3,4,5}
              ]
              
              \foreach \p in {0,1,2,3,4}{
                  \addplot+[discard if not={k}{\p},discard if not={gammastab}{1},discard if not={postprocessver}{2}, line width=\lw] table [x=L, y=psl2error, col sep=comma] {unfmixed_neumann.csv};
              }
             \foreach \p in {0,1,2,3}{
                  \addplot[gray, dashed, domain=0:5] {(0.1*(1/2^(\p+2))^(x+2.9))};
              }
              \node [draw=none] at (axis description cs:0.77,0.61) {\color{gray}\footnotesize $\!\!\mathcal{O}(h^{2})$};           
              \node [draw=none] at (axis description cs:0.93,0.48) {\color{gray}\footnotesize $\!\!\mathcal{O}(h^{3})$};           
              \node [draw=none] at (axis description cs:0.93,0.31) {\color{gray}\footnotesize $\!\!\mathcal{O}(h^{4})$};           
              \node [draw=none] at (axis description cs:0.77,0.12) {\color{gray}\footnotesize $\!\!\mathcal{O}(h^{5})$};

          \end{groupplot}
      \end{tikzpicture}
      \pgfplotslegendfromname{named}
  \end{center}
  \caption{Numerical results for Poisson problem with imposed Neumann boundary conditions.} \label{fig:uConvRatesNeumann}
\end{figure}

\subsection{Comparison of stabilizations} \label{sec:compstab}
Within the manuscript we discussed several means to stabilize the unfitted mixed Poisson, respectively the unfitted Darcy problem.
Here, we want to compare and discuss the impact on the computational costs in terms of the sparsity of the resultig system matrix for different approaches.
To discuss the different ghost penalty variants, we define for $\gamma_u,\gamma_{\text{div}},\gamma_p > 0$
\begin{itemize}
  \item $j_{\text{u}}: \Sigma_h \to \Sigma_h^\ast$ to be the operator corresponding to the bilinear form $\Sigma_h \times \Sigma_h \to \mathbb{R}, (u,v) \mapsto \gamma_{u} j_h(u, v)$,
  \item $j_{\text{d}}: \Sigma_h \to Q_h^\ast$ to be the operator corresponding to the bilinear form $\Sigma_h \times Q_h \to \mathbb{R}, (v,q) \mapsto \gamma_{\text{div}} j_h(\div v, q)$, 
  \item $j_{\text{p}}: Q_h \to Q_h^\ast$ to be the operator corresponding to the bilinear form $Q_h \times Q_h \to \mathbb{R}, (p,q) \mapsto \gamma_{p} j_h(p, q)$.
\end{itemize}
These operators allow us to define the five discrete operators displayed in \cref{tab:comparisongp} corresponding to the following five variants:
\begin{enumerate}
  \item[(V1)] First, we consider the restricted mixed formulation as in \eqref{eq:mixedprob} which has the same number of unknowns and couplings as our main formulation (\hyperlink{def:M}{M}) with $\gamma_u=0$.
  \item[(V2)] Next, we consider an additional GP ensuring that the condition number of the $u$-mass matrix is bounded independently of the cut position, i.e. $\gamma_u > 0$ in (\hyperlink{def:M}{M}). 
  \item[(V3)] As a third method, we consider the method from Frachon et al. \cite{frachon2022divergence}, i.e. \eqref{eq:frachmethod} where an additional GP is applied to the divergence operator.
  \item[(V4)] In Puppy \cite{puppi2021cut} and Cao et al. \cite{caoextended} the divergence operator is not stabilized by a GP directly, but a GP is applied as a pressure stabilization coupling $p$ and $q$ (the pressure test and trial functions). %
  \item[(V5)] Finally we consider a worst case scenario where GP is applied on all parts of the discrete operator. This method has not been proposed in the literature so far and should only be seen as an extreme case in this comparison.
\end{enumerate}

\begin{table}
  \begin{center}
\begin{tabular}{|@{}c@{}|@{}c@{}|@{}c@{}|@{}c@{}|@{}c@{}|}
  \hline &&&&\\[-2.5ex]
  V1 & V2 & V3 & V4 & V5 \\[-0.2ex] \hline &&&&\\[-1.5ex]
  $ 
  \left(\!\!\!
  \begin{array}{c@{}c@{}c@{}|@{~}c@{}c@{}c}
    a && & b_{\small\!(\!h\!)}^T &&\\ \hline
    b_{\small\!(\!h\!)} &&&
  \end{array}
  \!\!\!\right)
  $
  & 
$ 
\left(\!\!\!
\begin{array}{c@{}c@{}c@{}|@{~}c@{}c@{}c}
  a &\color{teal!50!black}+& \color{teal!50!black}j_{\text{u}}  & b_{\small\!(\!h\!)}^T &&\\ \hline
  b_{\small\!(\!h\!)} &&& 
\end{array}
\!\!\!\right)
$
&
$ 
\left(\!\!\!
\begin{array}{c@{}c@{}c@{}|@{~}c@{}c@{}c}
  a &\color{teal!50!black}+& \color{teal!50!black}j_{\text{u}}  & b^T &\color{orange!50!black}+&\color{orange!50!black} j_{\text{d}}^T \\ \hline
  b&\color{orange!50!black}+&\color{orange!50!black} j_{\text{d}} &
\end{array}
\!\!\!\right)
$
&
$ 
\left(\!\!\!
\begin{array}{c@{}c@{}c@{}|@{~}c@{}c@{}c}
  a &\color{teal!50!black}+& \color{teal!50!black}j_{\text{u}} & b_{\small\!(\!h\!)}^T&& \\ \hline
  b_{\small\!(\!h\!)}&&  & \color{purple!50!black} \!\!-j_{\text{p}}
\end{array}
\!\!\!\right)
$
&
$ 
\left(\!\!\!
\begin{array}{c@{}c@{}c@{}|@{~}c@{}c@{}c}
  a &\color{teal!50!black}+& \color{teal!50!black}j_{\text{u}} & b^T&\color{orange!50!black}+&\color{orange!50!black}j_{\text{d}}^T \\ \hline 
  b&\color{orange!50!black}+&\color{orange!50!black} j_{\text{d}} &\color{purple!50!black} \!\! -j_{\text{p}}
\end{array}
\!\!\!\right)
$
\\[0.1ex] \hline &&&&
\\[-1.5ex]
\small 
(\hyperlink{def:M}{M}),$\gamma_u\!\!=\! 0$
&
\small 
(\hyperlink{def:M}{M}),$\gamma_u\!\!>\!0$
&
---
&
---
&
---
\\
\small \eqref{eq:mixedprob},\cite{d2012mixed}
&
---
&
\small \eqref{eq:frachmethod}, \cite{frachon2022divergence} 
&
\small\cite{caoextended,puppi2021cut}
&
---
\\
\hline
\end{tabular}
\end{center}
\vspace*{-0.3cm}
\caption{Five different discrete operators with different involvement of the ghost penalty stabilization.}
\label{tab:comparisongp}
\end{table}

For a numerical comparison of these five methods, we consider a specific unit cell representing a part of the unfitted \emph{boundary} of a two-dimensional domain, cf. \cref{fig:unitcell}. 
\begin{figure}
  \begin{tikzpicture}[rotate=0]
    \foreach \y in {1,2,3,4}{
    \draw[fill=gray,opacity=0.2*\y] (\y,0) -- (\y,1) -- (\y+1,0) --cycle;
    \draw[fill=teal,opacity=0.2*\y] (\y,1) -- (\y+1,0) -- (\y+1,1) --cycle;
    \draw[fill=teal,opacity=0.2*\y] (\y,1) -- (\y,2) -- (\y+1,1) --cycle;
    \draw[fill=teal,opacity=0.2*\y] (\y,2) -- (\y+1,1) -- (\y+1,2) --cycle;
    \draw[ultra thick, purple] (\y,1) -- (\y+1,1);
    \draw[ultra thick, purple] (\y,2) -- (\y+1,1);
    }
    \draw[ultra thick, orange] (3, 0) -- (3, 2) -- (3.95, 2) -- (3.95, 0) --cycle;
    \node[right] at (2.8, 2.25) {\color{orange!80!black} unit cell};
    \draw[ultra thick, red] (0.6666, 1.3333) -- (5.333, 1.3333);
    \node[right] at (5.333, 1.3333) {\color{red!80!black} $\partial \Omega$};
    \node[right] at (5.15, 0.5) {\color{gray!80!black} $\downarrow \Omega$};
    \draw[fill=teal,opacity=0.6] (6.5,1) -- (7,1) -- (6.5,1.5) -- cycle;
    \draw[fill=teal,opacity=0.6] (6.5,1) -- (7,1) -- (7,0.5) -- cycle;
    \draw[ultra thick, purple] (6.5,1) -- (7,1);
    \node[right] at (7.1, 1) {\color{purple!80!black} ghost penalty (GP) facet};
    \node[right] at (7.1, 0.5) {\color{teal!80!black} el. connected via GP facet};
   
  \end{tikzpicture}
  \caption{Sketch of the unit cell for the comparison of the different stabilization methods. The unit cell covers two uncut interior elements and two elements cut by the domain boundary. Degrees of freedom (and their induced couplings) associated with the vertical boundary are only counted for the left side.} \label{fig:unitcell}
\end{figure}
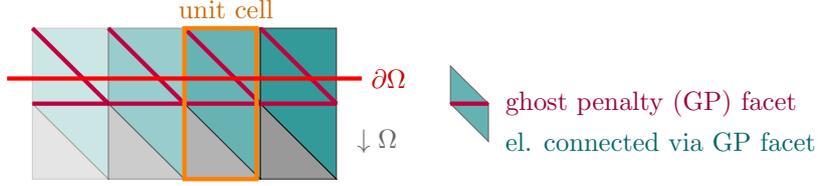
The number of degrees of freedom (\texttt{ndof}) per unit cell is the same for all five methods and we compare only the number of non-zero entries in the system matrix (\texttt{nnz}) per degree of freedom. We do not discuss structural properties of the discretizations which are also affected by the choice of the stabilization method. 
\begin{table}
  \begin{center}
\begin{tabular}{r@{\hspace*{1cm}}rrr@{\hspace*{1cm}}rrrrr}
  \toprule
  $k$ & \multicolumn{3}{c}{\texttt{ndof}} & \multicolumn{5}{c}{\texttt{nnz} per \texttt{dof}} 
  \\
  &$\Sigma_h$&$Q_h$&$\Sigma_h\!\!\times\!\!Q_h$& V1 & V2 & V3 & V4 & V5 \\
  \midrule
0 &   7 &   4 &  11 &   5.00&   6.45 &   7.91 &   7.09 &   8.64 \\
1 &  22 &  12 &  34 &  12.59&  16.82 &  21.06 &  18.68 &  23.18 \\
2 &  45 &  24 &  69 &  22.83&  31.17 &  39.52 &  34.83 &  43.70 \\
3 &  76 &  40 & 116 &  35.72&  49.52 &  63.31 &  55.55 &  70.21 \\
4 & 115 &  60 & 175 &  51.29&  71.86 &  92.43 &  80.86 & 102.71 \\
5 & 162 &  84 & 246 &  69.51&  98.20 & 126.88 & 110.74 & 141.22 \\
6 & 217 & 112 & 329 &  90.40& 128.53 & 166.66 & 145.21 & 185.72 \\
\bottomrule
\end{tabular}
\end{center}
\caption{Comparison of the number of nonzero entries in the system matrix (\texttt{nnz}; with two decimal precision) per degree of freedom (\texttt{dof}) for the five different stabilization methods V1--V5 for different polynomial degrees $k$.} \label{tab:num:comparison}
\end{table}
In \cref{tab:num:comparison} we display the resulting numbers. We observe that the costs of the ghost penalty couplings \emph{on cut elements} are significantly increased by ghost penalty terms with increasing costs from V1 to V2, V4, V3 to V5.  

Note that in an application many elements will be uncut and will have uncut neighbors and the costs associated with these elements will be the same for all the five methods. Here, we isolated only the costs of cut elements. The impact of the differences in the costs depends on factors such as geometry resolution and refinement strategies and will typically be less pronounced than the numbers in \cref{tab:num:comparison} suggest.

\subsection{Conditioning of linear systems}
As a last investigation we want to demonstrate the dependency of the condition number on a stabilization of the bilinear form $a(\cdot,\cdot)$. To this end, we consider the previous setup, but move the center of the geometry through the mesh. We redefine $r(x) = \sqrt{(x_1-x_s)^2+(x_2-x_s)^2}$ where $x_s$ denotes the shifting parameter for a shift in direction $(1,1)$. We fix $k=1$ and consider the coarsest refinement level. For this setup we consider $x_s \in \{0.00001 \cdot z \mid z \in \mathbb{N}_0, z \leq 1000\}$ and compute the condition number of the system matrix for the unstabilized case $\gamma_u=0$ and the GP stabilized case $\gamma_u=1$. We note that both formulations are stable under the assumption of exact arithmetics. However, the conditioning of both methods behaves differently. In \cref{fig:numex:cond} we display the condition number of the system matrix for the two cases. We observe several poles in the condition number for the unstabilized case. These poles are removed by the GP stabilization which guarantees that the conditon numbers are bounded from above by a constant independent of the cut position. 
In \cref{fig:numex:cond} we also show the cut configurations for one case with a moderate condition number and three cut configurations corresponding to three of the poles in the condition number for the unstabilized case. We conclude that the condition number is indeed unbounded for the unstabilized case and ghost penalty stabilization is necessary to ensure a bounded condition number \emph{if no further measures for solving the linear systems are taken}. Let us stress, however, that the unboundedness of the condition number does not imply a stability issue for the scenario of exact arithmetics. It is known that in other cases, see e.g. \cite{L_NM_2017}, specifically tailored preconditioning strategies allowed to solve linear systems robustly and efficiently even if condition numbers got unbounded in bad cut configurations. We leave the investigation of preconditioning strategies for the ($a(\cdot,\cdot)$-)unstabilized unfitted mixed Poisson problem -- especially in view of the hybridized variant -- for future work. 

\begin{figure}[!htbp]
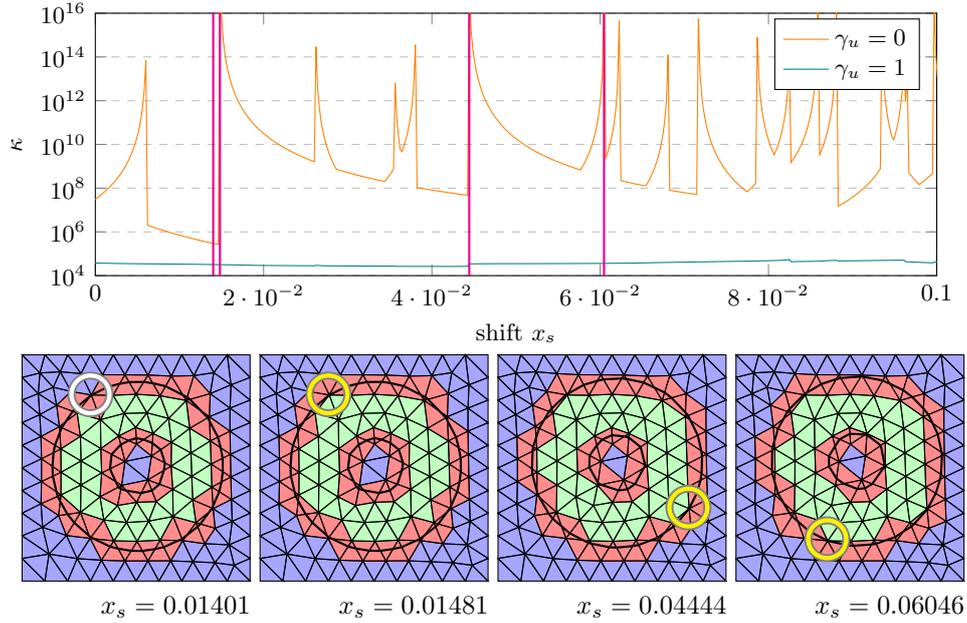

  \begin{center}\hspace*{-0.3cm}
      \begin{tikzpicture}[scale=1.0]
            \begin{axis}[
              xlabel={shift $x_s$},
              ylabel={$\kappa$},
           ymajorgrids=true,
           grid style=dashed,
          xmode=linear, ymode=log, width=\textwidth, height=0.4\textwidth, xmin=0, xmax=0.1, ymin=1e4, ymax=1e16, xtick={0,0.02,0.04,0.06,0.08,0.1}, ytick={1e4,1e6,1e8,1e10,1e12,1e14,1e16},]
              \addplot[discard if not={gammastab}{0}, color=orange] table [x=shift, y=cond, col sep=comma]{unfmixedcond.csv};
              \addplot[discard if not={gammastab}{1}, color=teal] table [x=shift, y=cond, col sep=comma]{unfmixedcond.csv};
              \addplot[thick, samples=50, smooth,domain=0:6,magenta, name path=three] coordinates {(0.01401,1e4)(0.01401,1e16)};
              \addplot[thick, samples=50, smooth,domain=0:6,magenta, name path=three] coordinates {(0.01481,1e4)(0.01481,1e16)};
              \addplot[thick, samples=50, smooth,domain=0:6,magenta, name path=three] coordinates {(0.04444,1e4)(0.04444,1e16)};
              \addplot[thick, samples=50, smooth,domain=0:6,magenta, name path=three] coordinates {(0.06046,1e4)(0.06046,1e16)};
              \legend{$\gamma_u=0$,$\gamma_u=1$}
            \end{axis}
              
        \end{tikzpicture} \\
        \begin{tabular}{r@{~}r@{~}r@{~}r}
        \begin{tikzpicture}[scale=1.5]
          \input{drawmesh_good_1.401e-02.tex}
          \node[draw=gray, ultra thick, circle, minimum size=0.55cm] at (-0.4,0.65){};
          \node[draw=gray, ultra thick, circle, minimum size=0.45cm] at (-0.4,0.65){};
          \node[draw=white, ultra thick, circle, minimum size=0.5cm] at (-0.4,0.65){};
          \end{tikzpicture} & 
        \begin{tikzpicture}[scale=1.5]
        \input{drawmesh_bad_1.481e-02.tex}
        \node[draw=gray, ultra thick, circle, minimum size=0.55cm] at (-0.4,0.65){};
        \node[draw=gray, ultra thick, circle, minimum size=0.45cm] at (-0.4,0.65){};
        \node[draw=yellow, ultra thick, circle, minimum size=0.5cm] at (-0.4,0.65){};
      \end{tikzpicture} &
        \begin{tikzpicture}[scale=1.5]
          \input{drawmesh_bad_4.444e-02.tex}
          \node[draw=gray, ultra thick, circle, minimum size=0.55cm] at (0.67,-0.35){};
          \node[draw=gray, ultra thick, circle, minimum size=0.45cm] at (0.67,-0.35){};
          \node[draw=yellow, ultra thick, circle, minimum size=0.5cm] at (0.67,-0.35){};
        \end{tikzpicture} &
          \begin{tikzpicture}[scale=1.5]
            \input{drawmesh_bad_6.046e-02.tex}
            \node[draw=gray, ultra thick, circle, minimum size=0.55cm] at (-0.2,-0.63){};
            \node[draw=gray, ultra thick, circle, minimum size=0.45cm] at (-0.2,-0.63){};
            \node[draw=yellow, ultra thick, circle, minimum size=0.5cm] at (-0.2,-0.63){};
          \end{tikzpicture} \\
          $x_s=0.01401$ & $x_s=0.01481$ & $x_s=0.04444$ & $x_s=0.06046$ \\
          \end{tabular}
        \end{center}\vspace*{-0.2cm}
  \caption{Condition number depending on cut position and stabilization in a shifted geometry setup for fixed polynomial degree and fixed mesh (top) and four cut configurations related to four specific shifts (before mesh deformation; cf. \cref{sec:geom}). The mesh colors indicate uncut interior elements (green), uncut exterior elements (blue) and cut elements (red). The white and yellow circles indicate the cut changes that correspond to the poles in the condition number.} \label{fig:numex:cond}
\end{figure}

\section*{Data availability} %
\hypertarget{sec:dataavail}{
  The numerical studies in this manuscript can be reproduced and the method can be further investigated with the reproduction data provided in \cite{LYR2CG_2023}.}

\section*{Acknowledgements}
The authors thank Guosheng Fu for re-initiating the search for robust $H(\div)$-conforming discretizations for unfitted domains and for fruitful discussions that lead the authors to the presented developments and investigations.

\nocite{*}
\bibliographystyle{amsplain}
\bibliography{proc}

\providecommand{\bysame}{\leavevmode\hbox to3em{\hrulefill}\thinspace}
\providecommand{\MR}{\relax\ifhmode\unskip\space\fi MR }
\providecommand{\MRhref}[2]{%
  \href{http://www.ams.org/mathscinet-getitem?mr=#1}{#2}
}
\providecommand{\href}[2]{#2}
\begin{thebibliography}{10}

\bibitem{BNV22}
S.~Badia, E.~Neiva, and F.~Verdugo, \emph{Linking ghost penalty and aggregated
  unfitted methods}, Comput. Methods Appl. Mech. Engrg. \textbf{388} (2022),
  114232.

\bibitem{BVM18}
S.~Badia, F.~Verdugo, and A.~F. Mart{\'i}n, \emph{The aggregated unfitted
  finite element method for elliptic problems},  \textbf{336}, 533--553.

\bibitem{BMUP01}
T.~Belytschko, N.~Moes, S.~Usui, and C.~Parimi, \emph{Arbitrary discontinuities
  in finite elements}, Internat. J. Numer. Methods Engrg. \textbf{50} (2001),
  993--1013.

\bibitem{brezzi2012mixed}
Daniele Boffi, Franco Brezzi, and Michel Fortin, \emph{Mixed finite element
  methods and applications}, Springer Science \& Business Media, 2013.

\bibitem{brennerscott}
Susanne~C. Brenner and Larkin~R. Scott, \emph{{The Mathematical Theory of
  Finite Element Methods}}, Texts in Applied Mathematics, vol.~15, Springer,
  2008.

\bibitem{Bur10}
E.~Burman, \emph{Ghost penalty},  \textbf{348}, no.~21-22, 1217--1220.

\bibitem{B15}
E.~Burman, S.~Claus, P.~Hansbo, M.~G. Larson, and A.~Massing, \emph{Cutfem:
  Discretizing geometry and partial differential equations},  \textbf{104},
  472--501.

\bibitem{burman2014projection}
Erik Burman, \emph{Projection stabilization of lagrange multipliers for the
  imposition of constraints on interfaces and boundaries}, Numerical Methods
  for Partial Differential Equations \textbf{30} (2014), no.~2, 567--592.

\bibitem{burman2022cut}
Erik Burman, Peter Hansbo, and Mats~G Larson, \emph{Cut finite element method
  for divergence free approximation of incompressible flow: optimal error
  estimates and pressure independence}, arXiv preprint arXiv:2207.04734 (2022).

\bibitem{burman2018cut}
Erik Burman, Peter Hansbo, Mats~G Larson, and Andr{\'e} Massing, \emph{Cut
  finite element methods for partial differential equations on embedded
  manifolds of arbitrary codimensions}, ESAIM: Mathematical Modelling and
  Numerical Analysis \textbf{52} (2018), no.~6, 2247--2282.

\bibitem{caoextended}
Pei Cao and Jinru Chen, \emph{{An extended finite element method for coupled
  Darcy-Stokes problems}}, International Journal for Numerical Methods in
  Engineering.

\bibitem{d2012mixed}
Carlo D’Angelo and Anna Scotti, \emph{{A mixed finite element method for
  Darcy flow in fractured porous media with non-matching grids}}, ESAIM:
  Mathematical Modelling and Numerical Analysis \textbf{46} (2012), no.~2,
  465--489.

\bibitem{frachon2022divergence}
Thomas Frachon, Peter Hansbo, Erik Nilsson, and Sara Zahedi, \emph{{A
  divergence preserving cut finite element method for Darcy flow}}, arXiv
  preprint arXiv:2205.12023 (2022).

\bibitem{fries_xfem_review}
Thomas-Peter Fries and Ted Belytschko, \emph{The extended/generalized finite
  element method: An overview of the method and its applications}, Int. J.
  Numer. Methods Eng. \textbf{84} (2010), no.~3, 253--304.

\bibitem{GaldibookNS}
G.P. Galdi, \emph{{An Introduction to the Mathematical Theory of the
  Navier-Stokes Equations: Steady-State Problems, 2. Edition}}, Springer New
  York, 2011.

\bibitem{grande2018analysis}
J{\"o}rg Grande, Christoph Lehrenfeld, and Arnold Reusken, \emph{Analysis of a
  high-order trace finite element method for {PDE}s on level set surfaces},
  SIAM Journal on Numerical Analysis \textbf{56} (2018), no.~1, 228--255.

\bibitem{guzman2018inf}
Johnny Guzm{\'a}n and Maxim Olshanskii, \emph{{Inf-sup stability of
  geometrically unfitted Stokes finite elements}}, Mathematics of Computation
  \textbf{87} (2018), no.~313, 2091--2112.

\bibitem{gurkan2020}
Ceren Gürkan and André Massing, \emph{A stabilized cut discontinuous galerkin
  framework for elliptic boundary value and interface problems}, Computer
  Methods in Applied Mechanics and Engineering \textbf{348} (2019), 466--499.

\bibitem{HH02}
A.~Hansbo and P.~Hansbo, \emph{An unfitted finite element method, based on
  {Nitsche}'s method, for elliptic interface problems},  \textbf{191},
  no.~47-48, 5537--5552.

\bibitem{Ji22}
Haifeng Ji, \emph{{An Immersed Raviart–Thomas Mixed Finite Element Method for
  Elliptic Interface Problems on Unfitted Meshes}}, Journal of Scientific
  Computing \textbf{91} (2022).

\bibitem{kirchhart2016analysis}
Matthias Kirchhart, Sven Gross, and Arnold Reusken, \emph{{Analysis of an XFEM
  discretization for Stokes interface problems}}, SIAM Journal on Scientific
  Computing \textbf{38} (2016), no.~2, A1019--A1043.

\bibitem{LHPvW21}
C.~Lehrenfeld, F.~Heimann, J.~{Preu\ss}, and H.~{von Wahl},
  \emph{\texttt{ngsxfem}: {Add}-on to {NGSolve} for geometrically unfitted
  finite element discretizations},  \textbf{6}, no.~64, 3237.

\bibitem{L_CMAME_2016}
Christoph Lehrenfeld, \emph{High order unfitted finite element methods on level
  set domains using isoparametric mappings}, Computer Methods in Applied
  Mechanics and Engineering \textbf{300} (2016), 716 -- 733.

\bibitem{LO_ESAIM_2019}
Christoph Lehrenfeld and Maxim~A. Olshanskii, \emph{An {Eulerian} finite
  element method for {PDE}s in time-dependent domains}, ESAIM: M2AN \textbf{53}
  (2019), 585--614.

\bibitem{L_NM_2017}
Christoph Lehrenfeld and Arnold Reusken, \emph{Optimal preconditioners for
  {Nitsche-XFEM} discretizations of interface problems}, Numerische Mathematik
  \textbf{135} (2017), 313--332.

\bibitem{LR_IMAJNA_2018}
\bysame, \emph{Analysis of a high order unfitted finite element method for an
  elliptic interface problem}, IMA J. Numer. Anal. \textbf{38} (2018),
  1351--1387.

\bibitem{LYR2CG_2023}
Christoph Lehrenfeld, Igor Voulis, and Tim van Beeck, \emph{{Replication Data
  for: Analysis of divergence-preserving unfitted finite element methods for
  the mixed Poisson problem}}, 2023.

\bibitem{liu2021cutfem}
Haoran Liu, Michael Neilan, and Maxim Olshanskii, \emph{{A CutFEM
  divergence--free discretization for the Stokes problem}}, arXiv preprint
  arXiv:2110.11456 (2021).

\bibitem{massing2014stabilized}
Andr{\'e} Massing, Mats~G Larson, Anders Logg, and Marie~E Rognes, \emph{{A
  stabilized Nitsche fictitious domain method for the Stokes problem}}, Journal
  of Scientific Computing \textbf{61} (2014), no.~3, 604--628.

\bibitem{MDB99}
N.~Moes, J.~Dolbow, and T.~Belytschko, \emph{A finite element method for crack
  growth without remeshing}, Internat. J. Numer. Methods Engrg. \textbf{46}
  (1999), 131--150.

\bibitem{monk03:_fem_maxwell}
P.~Monk, \emph{Finite element methods for {Maxwell}'s equations}, Oxford
  University Press, 2003.

\bibitem{PDR07}
J.~Parvizian, A.~D\"uster, and E.~Rank, \emph{Finite cell method}, Comput.
  Mech. \textbf{41} (2007), no.~1, 121--133 (English).

\bibitem{preussmaster}
Janosch {Preu\ss}, \emph{{Higher order unfitted isoparametric space-time FEM on
  moving domains}}, Master's thesis, NAM, University of G\"ottingen, February
  2018.

\bibitem{puppi2021cut}
Riccardo Puppi, \emph{{A cut finite element method for the Darcy problem}},
  arXiv preprint arXiv:2111.09922 (2021).

\bibitem{Sch14}
J.~Sch\"oberl, \emph{C++11 implementation of finite elements in {}{NGSolve}{}},
  Tech. report, Institute for Analysis and Scientific Computing, TU Wien.

\bibitem{Sch97}
\bysame, \emph{{NETGEN} an advancing front {2D}/{3D}-mesh generator based on
  abstract rules},  \textbf{1}, no.~1, 41--52.

\bibitem{Stenberg91}
Rolf Stenberg, \emph{Postprocessing schemes for some mixed finite elements},
  RAIRO. Modélisation Mathématique et Analyse Numérique \textbf{25} (1991).

\bibitem{Zhang20}
Shun Zhang, \emph{Robust and local optimal a priori error estimates for
  interface problems with low regularity: Mixed finite element approximations},
  Journal of Scientific Computing \textbf{84} (2020).

\end{thebibliography}

\appendix
\section{Non-robust inf-sup-stability results for the restricted mixed FEM}\label{sec:rmfem:infsup}
\begin{lemma}\label{lem:rmfem:infsup}
  For the discretization in \cref{sec:restrictedmixedfem} there holds
  \begin{equation}
    \inf_{q_h \in Q_h \setminus \{0\}} \sup_{v_h \in \Shperp\setminus \{0\}} \frac{b(v_h,q_h)}{\Vert v_h \Vert_{H(\div;\Omega)} \Vert q_h \Vert_{\Omega}} > c_R > 0
  \end{equation}
  which implies bijectivity of the operators $b: \Shperp \to Q_h^\ast$ and $b^T: Q_h \to (\Shperp)^\ast$ associated to the bilinear form $b(\cdot,\cdot)$ and hence that \eqref{eq:mixedprob:II}, \eqref{eq:mixedprob:III} and in turn the saddle point problem \eqref{eq:mixedprob} have unique solutions. 
\end{lemma}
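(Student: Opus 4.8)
The plan is to obtain the inf-sup bound by exploiting the finite dimensionality of $\Sh$ and $Q_h$: rather than constructing $c_R$ explicitly (which would expose, and fail to control, the cut dependence), I would show that the inner supremum is strictly positive for every fixed $q_h \neq 0$ and then extract a positive minimum by compactness. The whole argument rests on the exactness property $\div \Sh = Q_h$ on $\OmT$ recorded in the notation section. As a preliminary I would note that $\Vert \cdot \Vert_\Omega$ is a genuine norm on $Q_h$: for $q_h \in Q_h = \mathbb{P}^k(\Th)$ and any $T \in \Th$ the restriction $q_h|_T$ is a polynomial, and since $\meas_d(T \cap \Omega) > 0$ for every active element, vanishing of $q_h$ on the positive-measure set $T \cap \Omega$ forces $q_h|_T = 0$; hence $\Vert q_h \Vert_\Omega = 0 \Rightarrow q_h = 0$.

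The central step is the following. Fix $q_h \in Q_h \setminus \{0\}$. By surjectivity of the divergence, $\div \Sh = \div \mathbb{R}\mathbb{T}^k(\Th) = \mathbb{P}^k(\Th) = Q_h$ on $\OmT$, so there is $v_h \in \Sh$ with $\div v_h = q_h$ pointwise on $\OmT$. Splitting $v_h = v_h^0 + v_h^\perp$ along $\Sh = \Shz \oplus_a \Shperp$ and using $\Shz = \ker b$, I obtain
\[
  b(v_h^\perp, q_h) = b(v_h, q_h) = (\div v_h, q_h)_\Omega = \Vert q_h \Vert_\Omega^2 > 0 .
\]
In particular $v_h^\perp \neq 0$, so the inner supremum over $\Shperp$ is strictly positive for this $q_h$.

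To conclude, observe that the map $q_h \mapsto \sup_{w_h \in \Shperp\setminus\{0\}} b(w_h,q_h)/(\Vert w_h\Vert_{H(\div;\Omega)}\Vert q_h\Vert_\Omega)$ is continuous and $0$-homogeneous on the finite-dimensional space $Q_h$, and by the previous step it is strictly positive on the compact unit sphere $\{q_h : \Vert q_h\Vert_\Omega = 1\}$; hence it attains a positive minimum $c_R > 0$, which is the claimed bound. The bijectivity of $b : \Shperp \to Q_h^\ast$ and $b^T : Q_h \to (\Shperp)^\ast$ then follows by a dimension count: the construction above also shows $b : \Sh \to Q_h^\ast$ is surjective, so $\dim \Shperp = \operatorname{rank} b = \dim Q_h$; since $\Shperp \cap \ker b = \{0\}$, the restriction $b|_{\Shperp}$ is injective between equidimensional spaces and therefore bijective, and likewise for its transpose. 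Unique solvability of \eqref{eq:mixedprob:II}, \eqref{eq:mixedprob:III}, and hence of the full saddle point problem \eqref{eq:mixedprob} (combined with coercivity on $\Shz$), is then immediate.

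The main obstacle, as I see it, is not in running this argument but in that it is \emph{designed} to hide the cut dependence: the compactness step yields $c_R > 0$ with no information on how it degenerates as a cut shrinks. Were one to seek the explicit, non-robust dependence instead, the genuine difficulty would be the mismatch between the element-wise moment-preserving identity of $\PiRT$, which holds over full elements $T$, and the restriction of $b(\cdot,\cdot)$ to $T \cap \Omega$ on cut elements; quantifying this mismatch is precisely what cannot be bounded uniformly and is what motivates the modified form $b_h(\cdot,\cdot)$ of \cref{sec:newbh}.
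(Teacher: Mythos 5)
Your proof is correct and follows essentially the same route as the paper's: both hinge on the pointwise surjectivity $\div\,\Sh = Q_h$ on $\OmT$ to produce a test function with $b(v_h,q_h)=\Vert q_h\Vert_\Omega^2>0$, and both obtain the (cut-dependent) constant $c_R>0$ from finite dimensionality rather than an explicit construction. You merely make explicit what the paper treats as obvious — the splitting of $v_h$ onto $\Shperp$, the compactness argument on the unit sphere, and the dimension count for bijectivity — which is a welcome filling-in of detail, not a different approach.
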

\begin{proof}
  It is well-known from mixed FEM theory, cf. e.g. \cite{brezzi2012mixed}, that for all $q_h \in Q_h$ there is $v_h \in \Sh$ so that $\div v_h = q_h$ pointwise. With this choice we obviously obtain the result $b(v_h,q_h)>0$. We note however that due to the unfitted nature of the problem ($\Sh$, $\Shperp$ and $Q_h$ are defined w.r.t. $\OmT$ while the bilinear forms and norms are w.r.t. $\Omega$) the constant $c_R$ depends on the cut configurations and can become arbitrarily small.
\end{proof}

\section{Two ghost penalty stabilization candidates} \label{app:gp}
In the next two sections we briefly present two popular ghost penalty stabilization candidates.
\subsection{Higher order normal derivative jump GP stabilization}
A very established approach for the ghost penalty stabilization is based on higher order normal derivatives, cf. \cite{B15}:
\begin{equation}
    j_F(u_h,v_h) := \sum_{\ell = 0}^k h_F^{2 \ell + 1} ( \jump{\partial_n^\ell u_h}, \jump{\partial_n^\ell u_h} )_F
\end{equation}
where $h_F$ is the local mesh size at facet $F$ and $\jump{\cdot}$ denotes the jump across the facet and $\partial_n^\ell$ is the $\ell$-th order normal derivative.
\subsection{Direct version of the GP stabilization}
An alternative approach, introduced in \cite{preussmaster} avoids forming higher derivatives, but requires volume integrals:
\begin{equation}
    j_F(u_h,v_h) := ((u_h^1-u_h^2),(v_h^1-v_h^2))_{\omega_F}
\end{equation}
where $u_h^i := \mathcal{E}^{\mathcal{P}} u_h {\vert_{T_i}}$, $v_h^i := \mathcal{E}^{\mathcal{P}} v_h {\vert_{T_i}}$, $i=1,2$ where $\mathcal{E}^{\mathcal{P}}$ is the polynomial extension operator; $\omega_F$ is the patch of the two volume elements $T_1$ and $T_2$ adjacent to $F$.

Proofs that \cref{ass:gp} holds for the higher order normal derivative jump GP and the direct GP stabilization are given in (among others) \cite{preussmaster} and \cite[Lemma 5.2]{LO_ESAIM_2019}.

\section{Proof of \cref{lemma:InfSupBh}} \label{app:lbb}
\begin{proof} Slight adaptations to the proof as in e.g. \cite[Section 7.1.2]{brezzi2012mixed} are necessary to account for the fact that the domain $\OmT$ depends on $h$. To this end, we introduce $\Omega_e$, a Lipschitz domain that contains all domains for mesh sizes smaller than an arbitrary fixed mesh size $h_0$, s.t. $\OmT \subset \Omega_e$ for $h \leq h_0$. 
    The divergence operator $b: H(\div;\Omega_e) \to L^2(\Omega_e)$ is surjective. For every $q \in L^2(\Omega_e)$ take the solution to the Cauchy problem $- \Delta \phi = q$ in $\Omega_e$ with $\phi=0$ on $\partial \Omega_e$. Then $u = - \nabla \phi \in H(\div;\Omega_e)$ and $\div u = q$, $\Vert u \Vert_{H(\div;\Omega_e)} \lesssim \Vert q \Vert_{\Omega_e}$. Now, for every $\bar p_h \in Q_h$ set $q= \chi_{\OmT} ~ \bar p_h$ and $u_h = \PiRT u|_{\OmT} \in \Sh$. Then, with the Fortin-property of the Raviart-Thomas interpolator $\PiRT$ we have 
    \begin{align*}
    \div u_h &= \div \PiRT u|_{\OmT} = \Pi^Q (\div u)|_{\OmT} = \Pi^Q q|_{\OmT} = \bar{p}_h \text{ in }\OmT \text{ and }\\[-0.3ex]
    \Vert u_h \Vert_{\Sigma} 
    & \leq 
    \Vert u_h \Vert_{H(\div;\OmT)} 
    \stackrel{(\ast)}{\lesssim}
    \Vert u \Vert_{H(\div;\Omega_e)} 
    \stackrel{(\ast\ast)}{\lesssim}
    \Vert q \Vert_{\Omega_e}
    =
    \Vert q \Vert_{\OmT}
    =
    \Vert \bar{p}_h \Vert_{\OmT}.
    \end{align*}
    Let us stress that $(\ast)$ and $(\ast\ast)$ involve only constants that are independent of $h$ which is due to the Raviart-Thomas interpolation and the domain $\Omega_e$ being independent of $h$.
    Alltogether we have that for every $\bar{p}_h \in Q_h$ there is $u_h \in \Sh$ s.t.
    $$
    b_h(u_h, \bar p_h ) = \Vert \bar p_h \Vert_{\OmT}^2 \gtrsim \Vert \bar{p}_h \Vert_{\OmT} \Vert u_h \Vert_{H(\div,\OmT)} \gtrsim \Vert \bar{p}_h \Vert_{\OmT} \Vert u_h \Vert_{\Sigma}.  %
    $$
\end{proof}

\end{document}